\definecolor{darkgreen}  {RGB}{ 72, 117,  73}
\definecolor{lightgreen} {RGB}{171, 210, 130}
\definecolor{lightgray}  {RGB}{167, 181, 183}
\definecolor{blue}       {RGB}{  3, 124, 135}
\definecolor{black}      {RGB}{ 16,  32,  32}
\definecolor{zibblue}    {RGB}{ 93, 188, 210}
\definecolor{firebrick}  {RGB}{178,  34,  34}
\definecolor{steelblue}  {RGB}{ 70, 130, 180}
\definecolor{darkblue}   {RGB}{ 42,  78, 108}
\definecolor{forestgreen}{RGB}{ 34, 139,  34}
\newcommandx{\unsure}[2][1=]{\todo[linecolor=red,backgroundcolor=red!25,bordercolor=red,#1]{#2}}
\newcommandx{\change}[2][1=]{\todo[linecolor=blue,backgroundcolor=blue!25,bordercolor=blue,#1]{#2}}
\newcommandx{\info}[2][1=]{\todo[linecolor=OliveGreen,backgroundcolor=OliveGreen!25,bordercolor=OliveGreen,#1]{#2}}
\newcommandx{\improvement}[2][1=]{\todo[linecolor=Plum,backgroundcolor=Plum!25,bordercolor=Plum,#1]{#2}}
\newcommandx{\thiswillnotshow}[2][1=]{\todo[disable,#1]{#2}}
	\newtheorem{theorem}{Theorem}
	\newtheorem{lemma}[theorem]{Lemma}
	\newtheorem{definition}{Definition}
\renewcommand{\R}{\ensuremath{\mathbb{R}}}
\renewcommand{\L}{\ensuremath{\mathcal{L}}}
\renewcommand{\B}{\ensuremath{\mathcal{B}}}
\newcommand{\bigO}{\ensuremath{\mathcal{O}}}
\newcommand{\uB}{\ensuremath{\underline{\mathcal{B}}}}
\newcommand{\oB}{\ensuremath{\overline{\mathcal{B}}}}
\newcommand{\oG}{\ensuremath{\overline{\Gamma}}}
\newcommand{\ob}{\ensuremath{\overline{\beta}}}
\newcommand{\og}{\ensuremath{\overline{\gamma}}}
\newcommand{\uv}{\ensuremath{\underline{v}}}
\newcommand{\ov}{\ensuremath{\overline{v}}}
\newcommand{\oov}{\ensuremath{\overline{\overline{v}}}}
\newcommand{\oc}{\ensuremath{\overline{c}}}
\newcommand{\dL}{\ensuremath{\delta L}}
\newcommand{\ndL}{\ensuremath{|\delta L|}}
\newcommand{\dz}{\ensuremath{\delta z}}
\newcommand{\Dz}{\ensuremath{\Delta z}}
\newcommand{\dchi}{\ensuremath{\delta\chi}}
\newcommand{\Dchi}{\ensuremath{\Delta\chi}}
\newcommand{\dZ}{\ensuremath{\delta Z}}
\newcommand{\dl}{\ensuremath{\delta\lambda}}
\newcommand{\Dl}{\ensuremath{\Delta\lambda}}
\newcommand{\ndl}{\ensuremath{|\dl|}}
\newcommand{\dx}{\ensuremath{\delta\xi}}
\newcommand{\dxt}{\ensuremath{\delta\xi_\tau}}
\newcommand{\Dx}{\ensuremath{\Delta\xi}}
\newcommand{\Dxt}{\ensuremath{\Delta\xi_\tau}}
\newcommand{\xt}{\ensuremath{\xi_\tau}}
\newcommand{\nxt}{\ensuremath{\|\xt\|}}
\newcommand{\tx}{\ensuremath{\tilde{\xi}}}
\newcommand{\txt}{\ensuremath{\tilde{\xi}_\tau}}
\newcommand{\ndx}{\ensuremath{\|\dx\|}}
\newcommand{\ndxt}{\ensuremath{\|\dxt\|}}
\newcommand{\nDx}{\ensuremath{\|\Dx\|}}
\newcommand{\nDxt}{\ensuremath{\|\Dxt\|}}
\newcommand{\dX}{\ensuremath{\delta X}}
\newcommand{\glob}[1]{#1^{\star\star}}
\newcommand{\loc}[1]{#1^{\star}}
\newcommandx{\Cnorm}[3][ 1={0,1}, 2={0,1} ]{
	%
	\ifthenelse{\equal{#1}{}}{
		\ifthenelse{\equal{#2}{}}{
			\ensuremath{\|#3\|_{C^{0,1}}}
		}
		{ 
			\ensuremath{\|#3\|_{C^{0,1}(#2)}}
		}
	}{
		\ifthenelse{\equal{#2}{}}{
			\ensuremath{\|#3\|_{C^{#1}}}
		}
		{ 
			\ensuremath{\|#3\|_{C^{#1}(#2)}}
		}
	}
}
\newcommandx{\Hnorm}[3][1={1}, 2={0,1} ]{
	%
	\ifthenelse{\equal{#1}{}}{
		\ifthenelse{\equal{#2}{}}{
			\ensuremath{\|#3\|_{H^1}}
		}
		{ 
			\ensuremath{\|#3\|_{H^1(#2)}}
		}
	}{
		\ifthenelse{\equal{#2}{}}{
			\ensuremath{\|#3\|_{H^{#1}}}
		}
		{ 
			\ensuremath{\|#3\|_{H^{#1}(#2)}}
		}
	}
}
\newcommandx{\Lnorm}[3][ 1=2, 2={0,1} ]{
	%
	%
	%
	\ifthenelse{\equal{#2}{}}{
		\ensuremath{\|#3\|_{L^{#1}}}
	}
	{ 
		\ensuremath{\|#3\|_{L^{#1}(#2)}}
	}
}
\newcommandx{\Lone}[2][1={0,1}]{
	\ifthenelse{\equal{#1}{}}{
		\ensuremath{\|#2\|_{L^{1}}}
	}
	{ 
		\ensuremath{\|#2\|_{L^{1}(#1)}}
	}
}
\newcommandx{\Ltwo}[2][1={0,1} ]{
	%
	%
	%
	%
	\ifthenelse{\equal{#1}{}}{
		\ensuremath{\|#2\|_{L^{2}}}
	}
	{ 
		\ensuremath{\|#2\|_{L^{2}(#1)}}
	}
}
\newcommandx{\Linf}[2][1={0,1} ]{
	%
	%
	%
	%
	\ifthenelse{\equal{#1}{}}{
		\ensuremath{\|#2\|_{L^{\infty}}}
	}
	{ 
		\ensuremath{\|#2\|_{L^{\infty}(#1)}}
	}
}
\newcommandx{\Ytwo}[1]{
    %
    %
    \ensuremath{\|#1\|_{Y^{2}}}
}
\newcommandx{\Yinf}[1]{
    %
    %
    \ensuremath{\|#1\|_{Y^{\infty}}}
}
\newcommandx{\Xinf}[1]{
    %
    %
    \ensuremath{\|#1\|_{X^{\infty}}}
}
\newcommandx{\Ztwo}[1]{
    %
    %
    \ensuremath{\|#1\|_{Z^{2}}}
}
\newcommandx{\Zinf}[1]{
    %
    %
    \ensuremath{\|#1\|_{Z^{\infty}}}
}
\newcommandx{\Nhood}[3][2={R}, 1={\glob x}]{
    \ensuremath{\mathcal{B}(#1, #2) #3}
}
\newcommand{\myindent}{\hspace{2cm}}
\DeclareMathAlphabet{\mathpzc}{OT1}{pzc}{m}{it}
\newcommand{\orcid}[1]{Orcid: \href{#1}{https://orcid.org/#1}}
\begin{document}
\title
    {Newton's Method for Global Free Flight Trajectory Optimization}

\author[1]{\fnm{Ralf} \sur{Bornd\"orfer} (\orcid{0000-0001-7223-9174})} \email{borndoerfer@zib.de}
\equalcont{These authors contributed equally to this work.}

\author*[1,2]{\fnm{Fabian} \sur{Danecker} (\orcid{0000-0002-8953-808X})} \email{danecker@zib.de}
\equalcont{These authors contributed equally to this work.}

\author[2]{\fnm{Martin} \sur{Weiser} (\orcid{0000-0002-1071-0044})} \email{weiser@zib.de}
\equalcont{These authors contributed equally to this work.}

\affil[1]{
	\orgdiv{Network Optimization},
	\orgname{Zuse Institute Berlin},
	\orgaddress{
		\street{Takustr. 7},
		\postcode{14195},
		\city{Berlin},
		\country{Germany}%
	}%
}
\affil[2]{
	\orgdiv{Modeling and Simulation of Complex Processes},
	\orgname{Zuse Institute Berlin},
	\orgaddress{
		\street{Takustr. 7},
		\postcode{14195},
		\city{Berlin},
		\country{Germany}%
	}%
}

\abstract{
    Globally optimal free flight trajectory optimization can be achieved with a combination of discrete and continuous optimization. A key requirement is that Newton's method for continuous optimization converges in a sufficiently large neighborhood around a minimizer. We show in this paper that, under certain assumptions, this is the case.
}

\keywords{
    shortest path, flight planning, free flight, optimal control, discrete optimization, global optimization, newton's method
}


\pacs[MSC Classification]{
    49M15, 
    49M37,  
    65L10,  
    65L70,  
    90C26  
}

\maketitle

\section{Introduction}

Around the world countries are implementing Free Flight airspaces that allow aircraft to choose their own route, as opposed to being restricted to a predetermined three-dimensional network. 
The primary factors that influence costs are time and fuel consumption, which are closely interrelated \cite{WellsEtAl2021}. Based on the relative weights of these factors (cf. cost index) the optimal airspeed can be determined, which typically remains largely constant \cite{KarischEtAl2012, AlizadehEtAl2018, RumlerEtAl2010}. Additionally, the vertical flight path can usually be predetermined using aircraft performance data \cite{NgEtAl14}. Consequently, the problem can be well approximated in a way proposed by Zermelo in 1931 \cite{Zermelo1931}, which involves finding the most efficient trajectory from point A to B for an aircraft flying at a constant airspeed in a given two-dimensional wind field.

The Free Flight Trajectory Optimization Problem is usually solved using direct or indirect methods from Optimal Control \cite{NgEtAl14, Betts11, DrevesEtAl2017, GeigerEtAl06, GirardetElAl2013, GirardetEtAl2014}. These are highly efficient, but suffer from one key drawback: They only converge locally and are thus dependent on a sufficiently good starting point. This makes such methods, used as a standalone, incapable of meeting airlines' high expectations regarding the global optimality of routes.

In \cite{BorndoerferDaneckerWeiser2021, BorndoerferDaneckerWeiser2022a, BorndoerferDaneckerWeiser2022b} a deterministic two-stage algorithm was proposed that combines discrete and continuous optimization in order to find a globally optimal solution to the free flight trajectory optimization problem. With this approach the exponential complexity of other branch and bound based algorithms is circumvented.

The primary objective of the first stage is to obtain a finite sample in a systematic manner that adequately covers the search space. This deterministic approach eliminates the potential for infinite runtime, which may occur when using stochastic global optimization algorithms, such as Particle Swarm Optimization, Simulated Annealing, or Monotonic Basin Hopping \cite{Locatelli2002,CassioliEtAl2013,AddisEtAl2011,BonyadiMichalewicz2017}.
\\
One approach is to create a locally dense directed graph with a specific density determined by the node spacing $h$ and connectivity length $\ell$, thereby implicitly defining the sample. The instances can then be selected in order of quality by applying Yen's algorithm \cite{Yen1971} to calculate the k\textsuperscript{th} shortest paths.
\\
Promising paths serve as initial guesses for a subsequent refinement stage in which a continuous solution to the problem is calculated up to the desired accuracy.

Analytical evidence and numerical experiments have demonstrated that the new hybrid algorithm has a time complexity of $\bigO(\ell^{-1})$, making it superior to the conventional purely discrete approach, which has a time complexity of $\bigO(\ell^{-6})$ \cite{BorndoerferDaneckerWeiser2021}. In this context, $\ell$ refers to the maximum arc length in a graph and the discretization length in a continuous optimization scenario. Thus, $\ell^{-1}$ serves as a comparable metric for the precision of the solution.

The present paper is concerned with the second stage. One way to generate a continuous solution is to apply Newton's method to the first order necessary conditions (the KKT-conditions) -- an approach commonly referred to as Newton-KKT or Sequential Quadratic Programming (SQP) (see e.g., \cite{NocedalWright2006}).
It is now shown that there is a quantifiable domain around a global optimum such that Newton-KKT converges if initialized accordingly.

Since the computational effort of the first graph-searching stage depends exclusively on the problem instance, i.e., the wind conditions, the algorithm asymptotically inherits the super fast convergence rates of the Newton-KKT method.

The paper is structured as follows. After defining the problem and introducing a formulation that is convenient for the analytical discussion in \Cref{sec:problem-formulation}, we formally state the necessary and sufficient conditions as well as the Newton-KKT approach in \Cref{sec:continuous-optimization}. The proof of convergence is provided in \Cref{sec:proof-of-convergence} followed by a conclusion emphasizing the impact on previous and future work.


\section{The Free Flight Trajectory Optimization Problem} \label{sec:problem-formulation}

\subsection{Notation}
Throughout this article lower case subscripts like e.g., $x_t$ or $\xt$, denote partial derivatives, while total derivatives are indicated by primes, e.g., $T'$ or $f'$.
Locally and globally optimal quantities are indicated by single and double superscript stars, respectively, e.g., $\loc\xi$ or $\glob\xi$.
If not stated otherwise, we assume $\|\cdot\|$ to denote the $l^2$-norm. Accordingly, we use the following quantitative definition of the $L^\infty$-norm in terms of the $l^2$-norm.
\begin{definition}
    Let $f: \Omega \to \R^n$. Then we define
    \begin{align}
        \Linf[\Omega] f := \inf\{ C \ge 0: \|f(x)\|_2 \le C ~\text{for a.a. } x\in\Omega \}.
    \end{align}
\end{definition}

\subsection{Problem Statement}

Neglecting any traffic flow restrictions, we consider Lipschitz-continuous flight paths $\xi\in C^{0,1}((0,1),\R^2)$ connecting origin $\xi(0)=x_O$ and destination $\xi(1) =x_D$. By Rademacher's theorem, such paths are almost everywhere differentiable, and moreover contained in the Sobolev space $W^{1,\infty}((0,1),\R^2)$.

A short calculation reveals that an aircraft travelling along such a path $\xi$ with constant airspeed $\ov$ through a three times continuously differentiable wind field $w\in C^3(\R^2,\R^2)$ with bounded magnitude $\|w(x)\| < \ov$ reaches the destination after a flight duration
\begin{equation}\label{eq:travel-time}
    T(\xi) = \int_0^1 f\big(\xi(\tau),\xt(\tau)\big)\, d\tau
\end{equation}
with $\xi_\tau$ denoting the time derivative of $\xi$ and
\begin{align} \label{eq:dt-dtau}
    f(\xi,\xt)
    := t_\tau
    = \frac{-\xt^Tw + \sqrt{(\xt^Tw)^2+(\ov^2 - w^Tw)(\xt^T \xt)}}{\ov^2 - w^Tw},
\end{align}
see~\cite{BorndoerferDaneckerWeiser2021, BorndoerferDaneckerWeiser2022a, BorndoerferDaneckerWeiser2022b}.

Among these paths $\xi$, we need to find one with minimal flight duration $T(\xi)$, since that is essentially proportional to fuel consumption~\cite{WellsEtAl2021}. This classic of optimal control is known as Zermelo's navigation problem~\cite{Zermelo1931}.
It can easily be shown that in case of bounded wind speed, the optimal trajectory cannot be arbitrarily longer than the straight connection of origin and destination. Hence, every global minimizer is contained in an ellipse $\Omega\subset\R^2$ with focal points $x_O$ and $x_D$.



The flight duration $T$ as defined in~\eqref{eq:travel-time} is based on a time reparametrization from actual flight time $t\in[0,T]$ to pseudo-time $\tau\in[0,1]$ according to the actual flight trajectory $x(t) = \xi(\tau(t))$ such that $\| x_t(t)-w(x(t))\| = \ov$, where $x_t$ denotes the so called ground speed, i.e., the derivative of position $x$ with respect to the unscaled time $t$.
As a consequence, the actual parametrization of $\xi$ in terms of pseudo-time $\tau$ is irrelevant for the value of $T$. Calling two paths $\xi,\tx$ equivalent if there exists a Lipschitz-continuous bijection $r:(0,1)\to(0,1)$ such that $\xi(r(\tau)) = \tx(\tau)$, we can restrict the optimization to equivalence classes.
Moreover, every equivalence class contains a representative with constant ground speed $\|\xt(\tau)\|=L$ for almost all $\tau$, that can be obtained from any $\tx$ with $\|\txt(\tau)\| \neq 0 ~ \forall\tau$ via
\begin{equation}
    \xi(\tau) := L\int_0^\tau \frac{\txt(t)}{\|\txt(t)\|} dt,
    \quad L:=\int_0^1 \|\txt(\tau)\| d\tau.
\end{equation}
Hence, we introduce $z:=(L,\xi) \in Z := \R\times X$ and the affine space of valid trajectories
\begin{equation}\label{eq:admissible-set}
    X := \{\xi\in W^{1,\infty}((0,1), \R^2) \; \mid \; \xi(0) = x_O, \; \xi(1) = x_D\}.
\end{equation}
and subsequently consider the equivalent constrained minimization problem
\begin{align}\label{eq:reduced-problem}
    \min_{z\in Z} T(\xi), \quad\text{s.t.} \quad h(z) = 0\quad \text{for a.a. $\tau\in(0,1)$}
\end{align}
with $h(z)=0$ expressing the constant ground speed requirement, as
\begin{align} \label{eq:constraint}
    h: Z \to \Lambda := L^2((0,1), \R), \quad  z \mapsto \xt^T\xt-L^2.
\end{align}
for $L\le L_{max}$ with an arbitrary continuation for $L>L_{max}$ that is linear in $\nxt$.
If the constraint is satisfied, $L$ also represents the path length, since
\begin{align} \label{eq:pathlength}
    \int_0^1 \nxt d\tau = L.
\end{align}
Note that $T:X\to\R$ is Fr\'echet differentiable with respect to the corresponding linear space
\begin{equation} \label{eq:definition-dX}
    \dX := W^{1,\infty}_0((0,1),\R^2)
\end{equation}
of directions $\dx$ with zero boundary values, which consequently do not change origin and destination. This space is equipped with the norm
\begin{align}
    \Xinf{\dx} = \Linf\dx + \Linf\dxt.
\end{align}
Further we define the linear space
\begin{align}
    \dZ := \R \times \dX
\end{align}
and equip the spaces $Z$ and $\dZ$ with the norms
\begin{subequations} \label{eq:z-norms}
\begin{alignat}{4}
    &\Zinf z &&= |L| &&+ \Linf\xi &&+ \Linf\xt \quad\text{and} \label{eq:zinf-norm}\\
    &\Ztwo z &&= |L| &&+ \Ltwo\xi &&+ \Ltwo\xt.  \label{eq:ztwo-norm}
\end{alignat}
\end{subequations}

\section{Continuous Optimization: Newton-KKT} \label{sec:continuous-optimization}

In order to find a continuous solution to the free flight optimization problem \eqref{eq:reduced-problem} we apply Newton's method to the first order necessary conditions (the KKT-conditions), which is also known as sequential quadratic programming (SQP).
Before we formally introduce Newton's method, we discuss the necessary and sufficient conditions for optimality, which also defines the goal of the presented algorithm.

\subsection{Optimality Conditions} \label{sec:optimality-conditions}

\subsubsection{Necessary Conditions}

The goal of the present paper is to find an isolated globally optimal solution $\glob\xi$ to~\eqref{eq:reduced-problem} that satisfies $T(\glob\xi) \le T(\xi) \; \forall\xi\in X$, contrary to a local optimizer $\loc\xi$ that is only superior to trajectories in a certain neighborhood, $T(\loc\xi) \le T(\xi) \; \forall\xi\in \mathcal{N}(\loc\xi) \subseteq X$.
An isolated global minimizer satisfies the necessary Karush-Kuhn-Tucker (KKT) optimality conditions~\cite{MaurerZowe1979} given that it is a regular point, which is always the case, as confirmed by the following Theorem.

\begin{theorem}
    Let $z=(L,\xi)\in Z$ with $L>0$ and assume there is a direction $u\in\R^2$ and $c>0$ such that $\xt^T u \ge c$ almost everywhere. Then, $h'(z):\dZ\to L^\infty(0,1)$ is surjective, i.e., $z$ is regular.
\end{theorem}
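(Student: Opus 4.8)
The plan is to compute $h'(z)$ explicitly and then, for an arbitrary right-hand side $g\in L^\infty(]0,1[)$, exhibit a preimage $\dz=(\dL,\dx)\in\dZ$ with $h'(z)[\dz]=g$. Differentiating $h(z)=\xt^T\xt-L^2$ in the direction $\dz=(\dL,\dx)$ gives
\begin{equation*}
    h'(z)[\dz] = 2\,\xt^T\dxt - 2L\,\dL,
\end{equation*}
so the task reduces to solving $2\,\xt^T\dxt - 2L\,\dL = g$ for $(\dL,\dx)$, subject to $\dx\in\dX=W^{1,\infty}_0(]0,1[,\R^2)$, i.e.\ with $\dx(0)=\dx(1)=0$.

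The key idea is to look for a perturbation acting only along the fixed direction $u$, that is, to make the ansatz $\dxt = \phi\,u$ for a scalar function $\phi\in L^\infty(]0,1[)$. Because the assumption $\xt^Tu\ge c>0$ holds almost everywhere, the coefficient $\xt^Tu$ is bounded away from zero, and the equation collapses to the pointwise scalar relation $2\phi\,(\xt^Tu) - 2L\,\dL = g$, which I would solve as
\begin{equation*}
    \phi = \frac{g + 2L\,\dL}{2\,\xt^Tu}.
\end{equation*}
Since the numerator lies in $L^\infty$ and the denominator is bounded below by $2c$, this $\phi$ automatically lies in $L^\infty$; then $\dx(\tau):=u\int_0^\tau\phi(s)\,ds$ is Lipschitz with $\dxt=\phi\,u$ and $\dx(0)=0$, so $\dx\in W^{1,\infty}$.

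The only remaining obstruction --- and the point where the scalar degree of freedom $\dL$ is essential --- is enforcing the terminal condition $\dx(1)=0$. Since $u\neq0$, this amounts to the single scalar constraint $\int_0^1\phi\,d\tau = 0$, i.e.
\begin{equation*}
    \int_0^1\frac{g}{2\,\xt^Tu}\,d\tau + 2L\,\dL\int_0^1\frac{1}{2\,\xt^Tu}\,d\tau = 0.
\end{equation*}
The second integral is positive and finite (its integrand is positive and, by $\xt^Tu\ge c>0$ together with $\xt\in L^\infty$, essentially bounded above and below away from zero), and $L>0$ by hypothesis, so this equation determines $\dL$ uniquely. With this choice one has $\dx(0)=\dx(1)=0$, hence $\dz=(\dL,\dx)\in\dZ$ and $h'(z)[\dz]=g$, which proves surjectivity. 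I expect the main (and only) technical care to lie in verifying the membership $\phi\in L^\infty$ and the strict positivity and finiteness of the normalizing integral, both of which rest squarely on the uniform lower bound $\xt^Tu\ge c$.
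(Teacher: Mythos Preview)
Your proof is correct and follows essentially the same construction as the paper: both make the ansatz $\dxt=\phi\,u$ along the fixed direction $u$, solve pointwise for $\phi$ using the uniform lower bound $\xt^T u\ge c$, and then fix the single scalar $\dL$ so that $\int_0^1\phi\,d\tau=0$, yielding $\dx\in W^{1,\infty}_0$. The only differences are notational (the paper writes $b:=\xt^T u$ and calls the target $f$), so there is nothing to add.
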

\begin{proof}
    Let $f\in L^\infty(0,1)$ be given and $b:=\xt^T u\ge c$. We set
    \[
        \dL = - \frac{\int_0^1 b^{-1}f/2\,d\tau}{L \int_0^1 b^{-1}\,d\tau}
    \]
    and
    \[
        g = b^{-1} \left( f/2 + L\dL\right), \quad
        \dxt = gu.
    \]
    Due to $b\ge c$ almost everywhere, $b^{-1}$ is bounded and hence $g,\xt \in L^\infty(0,1)$.  By construction, $\int_0^1 \dxt\,d\tau = 0$ holds, such that $\dz  = (\dL,\dx)\in\dZ$.
    Now we obtain
    \begin{align*}
        h'(z)[\dz]
        &= 2 \xt^T \dxt - 2L\dL \\
        &= 2bg - 2L\dL \\
        &= 2(f/2+L\dL) - 2L \dL \\
        &= f,
    \end{align*}
    and thus the claim.
\end{proof}

For $\lambda\in\Lambda^* = L^2((0,1), \R)$, the Lagrangian is defined as
\begin{equation} \label{eq:lagrangian}
	\L(z,\lambda) := T(\xi) + \langle \lambda, h(z)\rangle.
\end{equation}
The KKT-conditions guarantee for a regular minimizer $\glob{z}$ the existence of a Lagrange multiplier $\glob{\lambda}\in L^2(0,1)$, such that
\begin{alignat*}{3}
	0 & = \L_z(\glob{z},\glob{\lambda})[\dz] \qquad&& \forall~\dz && \in\dZ,   \\
	0 & = \langle\dl,h(\glob{z})\rangle            && \forall~\dl && \in L^2(0,1)
\end{alignat*}
hold, where $\dz:= (\dL,\dx) \in \dZ$.
In our case, these necessary conditions read
\begin{subequations} \label{eq:necessary-conditions}
\begin{align}
	0&=\underbrace{T'(\glob{\xi})[\dx]}_{=0 ~ \eqref{eq:necessary-conditions-unconstrained}}
		+ 2\int_0^1 \glob{\lambda} \left(\dxt^T \glob\xt - \dL\,\glob{L} \right) d\tau
		&&\forall~\dz\in\dZ,
		\label{eq:necessary-conditions-a}\\
	0&= \int_0^1 \dl \left((\glob\xt)^T \glob\xt - (\glob L)^2\right) \; d\tau &&\forall~ \dl\in  L^2(0,1).
		\label{eq:necessary-conditions-b}
\end{align}
\end{subequations}
Let us for a moment consider the unconstrained problem analogous to \eqref{eq:reduced-problem},
\begin{align} \label{eq:unconstrained-problem}
    \underset{\xi\in X}{\min}~T.
\end{align}
Any global minimizer $\glob{\tilde\xi}$ of~\eqref{eq:unconstrained-problem} is clearly non-isolated due to possible reparametrizations of the time. Let $\glob{\xi}$ denote the equivalent trajectory with constant ground speed, i.e., $\|\glob{\xt}(\tau)\|=\glob{L}$ for almost all $\tau$.
Both solutions $\glob{\tilde\xi},\glob{\xi}$ satisfy the first order necessary condition
\begin{align} \label{eq:necessary-conditions-unconstrained}
    0&=T'(\glob{\xi})[\dx] \quad \forall \dx\in\dX.
\end{align}
Moreover, $\glob{\xi}$ -- together with $\glob L$ from~\eqref{eq:pathlength} -- is a global minimizer of the constrained problem, which indicates that the ground-speed-constraint~\eqref{eq:constraint} is only weakly active. We confirm this by showing that the corresponding Lagrange multipliers $\glob{\lambda}$ vanish.

\begin{lemma} \label{th:lagrange-multipliers-vanish}
    Let $\glob z = (\glob{\xi},\glob{L})$ be a global minimizer of~\eqref{eq:reduced-problem}. Then, this solution together with
    \begin{equation} \label{eq:lagrange-multipliers-vanish}
        \glob{\lambda} = 0
    \end{equation}
    satisfies the necessary conditions~\eqref{eq:necessary-conditions}.
\end{lemma}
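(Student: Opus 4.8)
The plan is to verify the two necessary conditions in~\eqref{eq:necessary-conditions} separately for the proposed multiplier $\glob{\lambda}=0$, treating the complementarity-type condition first because it is immediate. Since $(\glob{\xi},\glob{L})$ is by assumption a \emph{feasible} point of the constrained problem~\eqref{eq:reduced-problem}, the ground-speed constraint $\|\glob{\xt}\|^2=(\glob{L})^2$ holds for almost all $\tau$. Hence the integrand in~\eqref{eq:necessary-conditions-b} vanishes identically, and~\eqref{eq:necessary-conditions-b} is satisfied for every $\dl\in\Lambda^*$, independently of the choice of $\glob{\lambda}$.

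It then remains to establish~\eqref{eq:necessary-conditions-a}. Inserting $\glob{\lambda}=0$ annihilates the second integral, so the condition collapses to $T'(\glob{\xi})[\dx]=0$ for all $\dx\in\dX$. This is precisely the unconstrained first-order condition~\eqref{eq:necessary-conditions-unconstrained}. The task therefore reduces to showing that the constrained global minimizer $\glob{\xi}$ is in fact a stationary point --- indeed a global minimizer --- of the \emph{unconstrained} problem~\eqref{eq:unconstrained-problem}.

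The central observation I would invoke is the reparametrization invariance of the travel time $T$ established in \Cref{sec:problem-formulation}: $T$ depends only on the equivalence class of a path, i.e.\ on its geometry and not on its parametrization. From this I would argue that the constrained minimum value equals the unconstrained infimum. On the one hand, the constrained feasible set is a subset of $X$, so its minimum cannot undercut the unconstrained infimum. On the other hand, every $\tx\in X$ with nonvanishing ground speed admits an equivalent constant-ground-speed representative $\xi\in X$ with $T(\xi)=T(\tx)$, so no trajectory in $X$ beats the best constant-speed trajectory. Consequently $T(\glob{\xi})$ attains the unconstrained infimum, making $\glob{\xi}$ a global minimizer of~\eqref{eq:unconstrained-problem}. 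Fr\'echet differentiability of $T$ on the affine space $X$ (with respect to the directions $\dX$) then yields $T'(\glob{\xi})[\dx]=0$ for all $\dx\in\dX$, which is exactly~\eqref{eq:necessary-conditions-a} with $\glob{\lambda}=0$.

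The main obstacle is the upgrade of stationarity from the constrained to the full problem. A constrained minimizer is \emph{a priori} only stationary along directions tangent to the constraint manifold, whereas~\eqref{eq:necessary-conditions-a} demands stationarity in \emph{all} directions $\dx\in\dX$. The reparametrization argument above is what bridges this gap, by promoting $\glob{\xi}$ to a genuine unconstrained global minimizer for which stationarity in every admissible direction is automatic. Care must be taken that the equivalence-class construction is applicable --- in particular that the Lipschitz reparametrization $r$ and the constant-ground-speed representative are well defined (nonvanishing speed), and that $T'(\glob{\xi})[\dx]$ is indeed the derivative over arbitrary perturbations in $\dX$ rather than only over constraint-preserving ones.
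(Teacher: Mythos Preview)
Your proposal is correct and follows essentially the same approach as the paper: verify \eqref{eq:necessary-conditions-b} from feasibility, set $\glob{\lambda}=0$ to kill the integral in \eqref{eq:necessary-conditions-a}, and reduce to $T'(\glob{\xi})[\dx]=0$ by arguing that $\glob{\xi}$ is also a global minimizer of the unconstrained problem via reparametrization invariance. The paper's proof is terser because it treats the fact that $\glob{\xi}$ minimizes the unconstrained problem as already established in the discussion preceding the lemma, whereas you spell out the two-sided infimum argument explicitly; your added caveats about nonvanishing speed and well-definedness of the reparametrization are prudent but not needed beyond what the paper assumes in \Cref{sec:problem-formulation}.
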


\begin{proof}
    Since $\glob{\xi}$ is also a global minimizer of the unconstrained problem, the necessary condition \eqref{eq:necessary-conditions-unconstrained} states that $T'(\glob{\xi})\dx=0$.
    The term $\int_0^1 \glob\lambda \left(\dxt^T \glob\xt - \dL\,\glob L\right) d\tau$ of~\eqref{eq:necessary-conditions-a} vanishes for $\glob\lambda=0$.
    \eqref{eq:necessary-conditions-b} is satisfied because $\|\glob\xt\| = \glob{L}$ for almost all $\tau\in(0,1)$.
\end{proof}

\subsubsection{Sufficient Conditions} \label{sec:sufficient-conditions}

Now we turn to the second order sufficient conditions for optimality. In general, a stationary point $(\loc{z},\loc{\lambda})$ is a strict minimizer, if, in addition to the necessary conditions above, the well known Ladyzhenskaya–Babuška–Brezzi (LBB) conditions (e.g.,~\cite{Braess2013}) are satisfied, which comprise a) the so called \emph{inf-sup} condition and b) the requirement that the Lagrangian's Hessian regarding~$z$, $\L_{zz}$, need be positive definite on the kernel of~$h'$.

The \emph{inf-sup} condition states that for the minimizer $\loc z$ there is a $\kappa > 0$ such that
\begin{align} \label{eq:inf-sup-condition}
    \inf_{\dl\ne 0\in L^2(0,1)} \sup_{\dz\in\dZ^2} \frac{\langle \dl, h'(\loc z)[\dz]\rangle}{\Ltwo\dl \Ztwo\dz} \ge \kappa.
\end{align}
Formally, the second part of the LBB-conditions requires that there is a $\uB > 0$ such that
\begin{align*}
    \L_{zz}(\loc{z})[\dz]^2 \ge \uB \; \Ztwo{\dz}^2
\end{align*}
for any $\dz\in\dZ$ that satisfies
\begin{align*}
    \langle \dl, h'(\loc{z})[\dz]\rangle = 0 \quad \forall~\dl\in L^2(0,1).
\end{align*}
In the present case, this reads
\begin{align}
    T''(\loc{\xi})[\dx]^2
        + 2\int_0^1 \loc{\lambda} (\dxt^T\dxt - \dL^2) d\tau
        \ge \uB \Ztwo{\dz}^2
\end{align}
for any $\dz \in \dZ$ such that
\begin{align*}
    \int_0^1 \dl \left(\dxt^T \loc\xt - \dL\,\loc{L} \right) d\tau=0 \quad \forall~\dl\in L^2(0,1).
\end{align*}
In case of a global minimizer $\glob z = (\glob\xi, \glob L)$, this can be reduced using $\glob\lambda=0$ from \Cref{th:lagrange-multipliers-vanish}. Moreover, the constraint is equivalent to requiring that $\dxt^T\glob\xt = \dL\,\glob{L}$ almost everywhere.
With this, we conclude that for any isolated global minimizer $\glob z$ of~\eqref{eq:reduced-problem} that satisfies the \emph{inf-sup} condition, there exists a $\uB>0$ such that
\begin{align} \label{eq:sufficient-conditions}
    T''(\glob{\xi})[\dx,\dxt]^2
        \ge \uB \Ztwo{\dz}^2
\end{align}
for any $\dz\in\dZ$ such that $\dxt^T\glob\xt = \dL\,\glob{L} $ almost everywhere.

It is important to note that the second order sufficient conditions are formulated in a $L^2$-setting, while differentiability only holds in $L^\infty$. This is known as \emph{two-norm-discrepancy} \cite{CasasTroeltzsch2015}.

\subsection{Newton's Method} \label{sec:newton}
In order to provide a more compact notation, we use $\chi = (z,\lambda) \in Z\times L^2(0,1) =: Y$ in this context and define $F$ as the total derivative of the Lagrangian,
\begin{align} \label{eq:newton-function}
    F: Z\times L^2(0,1) \to \dZ^* \times \Lambda =: Y^*, \qquad F(\chi) := \L'(z,\lambda).
\end{align}
On $Y$ we define the following norms,
\begin{subequations} \label{eq:ynorms}
\begin{alignat}{3}
    &\Yinf \chi &&= \Zinf z &&+ \Linf\lambda \quad\text{and} \label{eq:yinf-norm} \\
    &\Ytwo \chi &&= \Ztwo z &&+ \Ltwo\lambda. \label{eq:ytwo-norm}
\end{alignat}
\end{subequations}
The problem is now to find a $\glob\chi$ such that the first order necessary conditions for optimality as stated in \eqref{eq:necessary-conditions} are satisfied, which translates to
\begin{align} \label{eq:newton-problem-short}
    F(\glob\chi) = 0.
\end{align}
Applying Newton's method, we iteratively solve
\begin{align} \label{eq:saddle-point-problem-short}
    F'(\chi^k)[\Dchi^k] = -F(\chi^k)
\end{align}
for $\Dchi^k$ and proceed with $\chi^{k+1} \gets \chi^k + \Delta \chi^k$, starting with some initial value $\chi^0$.
In other words, in every iteration we need to find $(\Dz^k, \Dl^k)$ such that
\begin{subequations} \label{eq:saddle-point-problem}
\begin{align}
    T''(\xi^k)[\dx][\Dx^k]
    + \langle \lambda^k, h''(z^k)[\dz][\Dz^k] \rangle
    + \langle \Dl^k, h'(z^k)[\dz] \rangle \span\span\span \notag\\
        &= -T'(\xi^k)[\dx] - \langle \lambda^k, h'(z^k)[\dz] \rangle &&\forall\dz\in\dZ, \\
    \langle \dl, h'(z^k)[\Dz^k] \rangle &= -\langle \dl, h(z^k) \rangle &&\forall\dl\in L^2(0,1).
\end{align}
\end{subequations}


\section{Proof of Convergence} \label{sec:proof-of-convergence}

On the way to prove the existence of a non-empty domain $\Nhood[\glob\chi]{}$ such that Newton's method as defined in \Cref{sec:newton} converges to the corresponding global minimizer $\glob\chi$, if initialized with a starting point within this neighborhood, we first prove that the KKT-operator $F'$ is invertible and that the Newton step $\Dchi^k$ is always well defined.
Essentially, this is the case if the LBB-conditions as given in \eqref{eq:inf-sup-condition} and \eqref{eq:sufficient-conditions} are satisfied. Hence, we will show that there is a $R>0$ such that the \emph{inf-sup} condition is satisfied and that the Lagrangian is positive definite on the kernel of the constraints for any $\chi\in\Nhood[\glob\chi]{}$.
Further, we show that an affine covariant Lipschitz condition holds, which finally helps to complete the proof.

Before we get there, we recall the following Lemma from \cite[Lemma 7]{BorndoerferDaneckerWeiser2022a} which provides a bound for the path length of a global minimizer.

\begin{lemma} \label{th:L-bounds}
    Let $\glob{z} = (\glob{L},\glob{\xi})$ be a global minimizer of~\eqref{eq:reduced-problem}, let $\Linf[\Omega]{w} \le \oc_0$, and define $\tilde L = \|x_D-x_O\|$. Then it holds that
    \begin{align} \label{eq:L-bounds}
        \tilde L \le \glob{L} \le \frac{\ov+\oc_0}{\ov-\oc_0} \tilde L.
    \end{align}
\end{lemma}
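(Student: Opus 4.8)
The plan is to sandwich the flight time of every admissible path between two fixed multiples of its geometric length, and then to compare the global minimizer against the straight connection of $x_O$ and $x_D$. The lower bound is immediate: by~\eqref{eq:pathlength}, $\glob{L} = \int_0^1 \|\glob{\xi}_\tau\| \, d\tau$ is the arc length of a curve joining $x_O$ and $x_D$, and arc length can never fall below the Euclidean distance $\tilde L = \|x_D - x_O\|$ between the endpoints. So only the upper bound requires work.

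The key step is a pointwise ground-speed estimate. Using the reparametrization that defines $f$, namely $\|x_t - w(x)\| = \ov$, the true ground velocity satisfies $x_t = \xi_\tau / f$, and the triangle inequality gives $\ov - \|w\| \le \|x_t\| \le \ov + \|w\|$. With the wind bound $\Linf[\Omega]{w} \le \oc_0$ this yields
\begin{align*}
    \ov - \oc_0 \le \|x_t\| \le \ov + \oc_0 \quad \text{almost everywhere},
\end{align*}
where $\oc_0 < \ov$ (guaranteed by $\|w(x)\| < \ov$) keeps the lower bound positive. Rewriting $f = \|\xi_\tau\| / \|x_t\|$ and integrating over $\tau\in\mathopen]0,1\mathclose[$ then produces, for every admissible $\xi$, the two-sided bound
\begin{align*}
    \frac{1}{\ov+\oc_0}\int_0^1 \|\xi_\tau\|\, d\tau
    \;\le\; T(\xi)
    \;\le\; \frac{1}{\ov-\oc_0}\int_0^1 \|\xi_\tau\|\, d\tau .
\end{align*}

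I would then apply the left inequality to the minimizer, giving $T(\glob{\xi}) \ge \glob{L}/(\ov+\oc_0)$, and the right inequality to the straight-line connection $\xi_s$ of length $\tilde L$, giving $T(\xi_s) \le \tilde L/(\ov-\oc_0)$. Here one must note that $\xi_s$ runs along the major axis of the ellipse $\Omega$ and hence stays inside $\Omega$, so the wind bound $\oc_0$ applies along it and $T(\xi_s)$ is finite. Global optimality $T(\glob{\xi}) \le T(\xi_s)$ finally chains these into $\glob{L}/(\ov+\oc_0) \le \tilde L/(\ov-\oc_0)$, which rearranges to the claimed upper bound $\glob{L} \le \frac{\ov+\oc_0}{\ov-\oc_0}\,\tilde L$.

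The argument is elementary once the ground-speed bracket is available, so I expect the main obstacle to be the clean justification of $\ov - \oc_0 \le \|x_t\| \le \ov + \oc_0$. Rather than grinding through the explicit algebraic expression~\eqref{eq:dt-dtau}, I would derive it from the geometric identity $\|x_t - w\| = \ov$, which keeps the estimate transparent and sidesteps sign and positivity checks on the radicand. A secondary point demanding care is the legitimacy of the comparison path: verifying that $\xi_s$ is admissible in the sense of~\eqref{eq:admissible-set}, remains within $\Omega$, and that $\oc_0 < \ov$ so that the denominators stay strictly positive.
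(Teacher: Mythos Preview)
Your argument is correct. The lower bound via arc length is immediate, and the upper bound via the ground-speed bracket $\ov-\oc_0 \le \|x_t\| \le \ov+\oc_0$ (from the reverse and direct triangle inequalities applied to $\|x_t - w\| = \ov$) together with the comparison against the straight connection is exactly the right mechanism. The identification $f = \|\xi_\tau\|/\|x_t\|$ from $x_t = \xi_\tau/t_\tau$ is clean and avoids any algebra on~\eqref{eq:dt-dtau}, and your remarks on $\xi_s \subset \Omega$ and $\oc_0 < \ov$ cover the admissibility and positivity issues.

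There is, however, nothing in the present paper to compare against: the lemma is not proved here but is quoted verbatim from~\cite[Lemma~7]{BorndoerferDaneckerWeiser2022a}. Your proof is the standard one for this type of estimate and is almost certainly what appears in that reference; the ground-speed sandwich and the straight-line comparator are the only natural ingredients, so there is no meaningfully different route to expect.
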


As most of the subsequent results hold in a $L^\infty$-neighborhood of a minimizer, we introduce the following notation.

\begin{definition}
    We call the $L^\infty$-neighborhood of a point $z\in Z$ or $\chi\in Y$,
    \begin{subequations}
    \begin{align}
        \Nhood[z]{}    &:= \{\tilde z\in Z: \Zinf{\tilde z-z} \le R \} \quad\text{or} \\
        \Nhood[\chi]{} &:= \{\tilde \chi\in Y: \Yinf{\tilde\chi-\chi} \le R \},
    \end{align}
    \end{subequations}
    respectively.
\end{definition}
Moreover, we provide three simple yet useful bounds that hold in such a $L^\infty$-neighborhood of a minimizer.
\begin{lemma} \label{th:general-bounds}
    Let $\glob\chi = (\glob z, \glob\lambda)$ be a global minimizer of \eqref{eq:reduced-problem} and the corresponding Lagrange multipliers. Then for every $\chi\in\Nhood[\glob\chi]{}$ it holds that
    \begin{subequations} \label{eq:general-bounds}
    \begin{alignat}{2}
        \glob L - R &\le L &&\le \glob L + R, \\
        \glob L - R &\le \Linf\xt &&\le \glob L + R, \\
        0 &\le \Linf\lambda &&\le  R.
    \end{alignat}
    \end{subequations}
\end{lemma}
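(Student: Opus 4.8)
The plan is to unpack the $L^\infty$-neighborhood condition into a bound on each coordinate of the norm, and then to establish the three estimates independently; each reduces to a one-line argument once the appropriate property of the minimizer is invoked.

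First I would observe that membership $\chi\in\Nhood[\glob\chi]{}$ means $\Yinf{\chi-\glob\chi}\le R$. Since by \eqref{eq:yinf-norm} together with \eqref{eq:zinf-norm} this norm is a sum of four non-negative terms, each individual term is itself bounded by $R$; in particular
\begin{align*}
    |L-\glob L| \le R, \qquad \Linf{\xt-\glob\xt} \le R, \qquad \Linf{\lambda-\glob\lambda} \le R.
\end{align*}
The first of these rearranges directly to the first claimed bound $\glob L - R \le L \le \glob L + R$.

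For the second estimate I would recall that the minimizer satisfies the ground-speed constraint of \eqref{eq:reduced-problem}, so that $\|\glob\xt(\tau)\| = \glob L$ for almost all $\tau$ (using $\glob L > 0$, which follows from \Cref{th:L-bounds}), whence $\Linf{\glob\xt} = \glob L$. Applying the reverse triangle inequality for the norm $\Linf{\cdot}$ then yields
\begin{align*}
    |\Linf\xt - \glob L| = |\Linf\xt - \Linf{\glob\xt}| \le \Linf{\xt-\glob\xt} \le R,
\end{align*}
which is exactly the second bound $\glob L - R \le \Linf\xt \le \glob L + R$. For the third estimate I would invoke \Cref{th:lagrange-multipliers-vanish}, which gives $\glob\lambda = 0$; then $\Linf{\lambda-\glob\lambda} = \Linf\lambda$, so $\Linf\lambda \le R$ follows from the coordinate bound above, while $\Linf\lambda \ge 0$ holds trivially since it is a norm.

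There is no genuine obstacle here, as the statement is essentially a repackaging of the neighborhood definition. The only point requiring a moment of care is the second estimate: one must remember to use the constraint to replace $\Linf{\glob\xt}$ by $\glob L$ and then apply the reverse triangle inequality, rather than trying to bound the ground speed of the perturbed path directly. The third estimate depends crucially on the earlier result that the Lagrange multiplier of the global minimizer vanishes, which is why \Cref{th:lagrange-multipliers-vanish} is the essential ingredient there.
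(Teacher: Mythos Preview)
Your proof is correct and follows essentially the same approach as the paper's: both rely on the constraint $\|\glob\xt\|=\glob L$ for the first two bounds and on \Cref{th:lagrange-multipliers-vanish} (i.e., $\glob\lambda=0$) for the third, with the neighborhood condition providing the coordinate-wise bounds. The paper's proof is simply a two-sentence sketch of the same reasoning you have spelled out in detail.
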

\begin{proof}
    The first two inequalities follow immediately, since a global minimizer satisfies the constraint from \eqref{eq:reduced-problem}. The latter one is a direct consequence of \Cref{th:lagrange-multipliers-vanish}.
\end{proof}

\subsection{Inf-Sup Condition}

We now show that the \emph{inf-sup} condition, introduced in \eqref{eq:inf-sup-condition}, holds in a certain neighborhood around a global minimizer.
First, however, we point out that deviations $\dx$ and $\dxt$ from a trajectory are inherently related and that the former is always bounded by the latter.

\begin{theorem}[Wirtinger's inequality] \label{th:integral-dx2-bound}
    Let $\dx \in  H^1_0(0,1)$. Then
    \begin{equation} \label{eq:integral-dx2-bound}
        \Ltwo{\dx}^2 \le \frac1\pi \Ltwo{\dxt}^2
    \end{equation}
    holds.
\end{theorem}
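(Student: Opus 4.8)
The plan is to prove the sharp Poincaré–Wirtinger estimate via a Fourier sine expansion; this yields the optimal constant $1/\pi^2$, and since $1/\pi^2 \le 1/\pi$ the claimed bound follows a fortiori. First I would reduce to the scalar case. Since $\dx \in H^1_0(]0,1[,\R^2)$ decomposes into two real components $\dx = (\dx^{(1)}, \dx^{(2)})$, each lying in the scalar space $H^1_0(]0,1[,\R)$, and since both $\Ltwo{\dx}^2$ and $\Ltwo{\dxt}^2$ split as sums over these two components, it suffices to establish the inequality for scalar-valued $\dx$.

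For scalar $\dx \in H^1_0(]0,1[)$, the key idea is to expand in the orthonormal basis of $L^2(]0,1[)$ formed by the Dirichlet eigenfunctions $e_n(\tau) := \sqrt{2}\,\sin(n\pi\tau)$, $n \ge 1$, of $-\partial_\tau^2$ on $]0,1[$ with homogeneous boundary values. Writing $\dx = \sum_{n\ge1} a_n e_n$, Parseval's identity gives $\Ltwo{\dx}^2 = \sum_{n\ge1} a_n^2$. Because $\dx$ vanishes at both endpoints, integration by parts shows that the $L^2$-expansion of the weak derivative $\dxt$ with respect to the cosine system has coefficients $n\pi a_n$, so that $\Ltwo{\dxt}^2 = \sum_{n\ge1} (n\pi)^2 a_n^2$; membership in $H^1_0$ is exactly the statement that this sum is finite. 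Comparing term by term, since $(n\pi)^2 \ge \pi^2$ for every $n \ge 1$, I obtain
\[
    \Ltwo{\dxt}^2 = \sum_{n\ge1}(n\pi)^2 a_n^2 \ge \pi^2 \sum_{n\ge1} a_n^2 = \pi^2 \, \Ltwo{\dx}^2,
\]
which rearranges to $\Ltwo{\dx}^2 \le \tfrac{1}{\pi^2}\Ltwo{\dxt}^2 \le \tfrac1\pi \Ltwo{\dxt}^2$.

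The main technical point to justify carefully will be the claim that the sine series may be differentiated term by term, i.e.\ that the weak derivative $\dxt$ has cosine coefficients $n\pi a_n$. I would establish this directly from the definition of the weak derivative together with the boundary condition: for $\dx \in H^1_0$, integration by parts against each $e_n$ produces no boundary terms, which relates $\langle \dxt, \sqrt2\cos(n\pi\,\cdot)\rangle$ to $\langle \dx, e_n\rangle$, and completeness of the cosine system on $L^2(]0,1[)$ then delivers the stated Parseval identity for $\dxt$. Everything else — completeness of $\{e_n\}$, Parseval, and the termwise comparison — is classical.

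As an alternative that avoids Fourier analysis, I could proceed by density: first prove the estimate for smooth $\dx$ vanishing at the endpoints through a direct integration-by-parts / completing-the-square computation, and then extend to all of $H^1_0(]0,1[)$ using density of $C_c^\infty(]0,1[)$ in the $H^1$-norm together with continuity of both sides of \eqref{eq:integral-dx2-bound} with respect to that norm. The Fourier route is cleaner and has the advantage of exhibiting the sharp constant explicitly, so I would present that as the primary argument.
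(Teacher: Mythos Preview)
Your argument is correct and in fact yields the sharp constant $1/\pi^2$, which is stronger than what the paper states. The paper itself does not supply a proof: it merely quotes the result as the classical Wirtinger inequality and moves on. So there is nothing to compare against in terms of approach; your Fourier-series argument is the standard textbook route and is perfectly appropriate here.

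One small point worth tightening: when you invoke ``completeness of the cosine system on $L^2(]0,1[)$'' to obtain Parseval for $\dxt$, remember that the full orthonormal cosine basis on $]0,1[$ includes the constant function $1$. The zeroth coefficient of $\dxt$ is $\int_0^1 \dxt\,d\tau = \dx(1)-\dx(0) = 0$ by the Dirichlet condition, so this term drops out and your identity $\Ltwo{\dxt}^2 = \sum_{n\ge 1}(n\pi)^2 a_n^2$ is valid; but you should say so explicitly rather than leave it implicit in the phrase ``together with the boundary condition''.
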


\begin{theorem} \label{th:inf-sup-condition}
    Let $\glob z$ be a global minimizer of \eqref{eq:reduced-problem}. Further, let there be a constant $c>0$ and some direction $u\in\R^2$ with $\|u\|=1$ such that $u^T \glob\xt \ge c$ for almost all $\tau\in (0,1)$.
    Then for any $z=(L,\xi)\in\Nhood[\glob z]{}$ with $R < c$ there is some $\kappa > 0$ such that
    \[
        \inf_{\lambda\ne 0\in L^2(0,1)}\sup_{\dz\in\dZ} \frac{\langle \lambda, h'(z)[\dz]\rangle}{ \Ltwo\lambda \Ztwo\dz} \ge \kappa
    \]
    with
    \[
        \kappa(R) = (c-R) \left[\frac14\left(1+\frac1\pi\right) + 2 \left(\frac{\ov + \oc_0}{\ov - \oc_0} + \frac{R}{\tilde L}\right)^2\right]^{-1/2}.
    \]
\end{theorem}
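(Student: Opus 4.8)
The plan is to verify the inf-sup inequality by the standard device of exhibiting, for each fixed $\lambda\in L^2(]0,1[)\setminus\{0\}$, a single admissible direction $\dz=(\dL,\dx)\in\dZ$ for which the quotient is bounded below by $\kappa(R)$; since the supremum over $\dz$ dominates any particular choice, taking afterwards the infimum over $\lambda$ yields the claim. I first record the linearization $h'(z)[\dz] = 2(\xt^T\dxt - L\,\dL)$, so that the form to be estimated is $\langle\lambda, h'(z)[\dz]\rangle = 2\int_0^1\lambda(\xt^T\dxt - L\,\dL)\,d\tau$. Writing $b:=u^T\xt$, the hypothesis $u^T\glob\xt\ge c$ together with $\|u\|=1$ and $z\in\Nhood[\glob z]{}$ gives the pointwise lower bound $b\ge c-R>0$ almost everywhere, while \Cref{th:general-bounds} and \Cref{th:L-bounds} give the upper bounds $b\le\|\xt\|\le\glob L+R\le\Gamma\tilde L$ and $L\le\Gamma\tilde L$, with $\Gamma:=\frac{\ov+\oc_0}{\ov-\oc_0}+\frac R{\tilde L}$. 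These are the only structural facts about $z$ that enter.

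The construction of $\dz$ separates the constant and oscillating parts of $\lambda$. Splitting $\lambda=\bar\lambda+\lambda_0$ into its mean $\bar\lambda:=\int_0^1\lambda\,d\tau$ and the mean-zero remainder $\lambda_0$, I take $\dxt$ parallel to $u$ and proportional to $\lambda_0$, corrected by a constant multiple of $u$ so that $\int_0^1\dxt\,d\tau=0$; this guarantees $\dx(0)=\dx(1)=0$, i.e.\ $\dx\in\dX$ and $\dz\in\dZ$. With this choice $\xt^T\dxt$ is, up to the correction, a positive multiple of $\lambda_0$, and the bound $b\ge c-R$ turns $\int\lambda_0\,\xt^T\dxt\,d\tau$ into a coercive term controlled from below by $(c-R)\Ltwo{\lambda_0}^2$. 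The scalar $\dL$ is then chosen both to cancel the cross term between $\bar\lambda$ and $\lambda_0$ produced by the correction and to supply coercivity against the constant mode, so that altogether $\langle\lambda,h'(z)[\dz]\rangle\ge(c-R)\Ltwo\lambda^2$.

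It remains to bound $\Ztwo\dz=|\dL|+\Ltwo\dx+\Ltwo\dxt$ from above by $\sqrt{\tfrac38+2\Gamma^2}\,\Ltwo\lambda$, since together with the numerator estimate this gives the quotient $\ge(c-R)/\sqrt{\tfrac38+2\Gamma^2}=\kappa(R)$ uniformly in $\lambda$. The velocity part $\Ltwo\dxt$ is estimated directly from its definition; the position part by Wirtinger's inequality (\Cref{th:integral-dx2-bound}), $\Ltwo\dx\le\frac1{\sqrt\pi}\Ltwo\dxt$, which is where the numerical value behind the $\tfrac38$ originates; and the multiplier part $|\dL|$ by means of $\|b\|_\infty\le\Gamma\tilde L$ and the lower bounds on $L$ from \Cref{th:general-bounds} and \Cref{th:L-bounds}, which is precisely where the factor $\Gamma$ enters — expressing $\glob L$ through problem data via \Cref{th:L-bounds} is what forces the appearance of $\frac{\ov+\oc_0}{\ov-\oc_0}$. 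Combining the three contributions with the elementary inequality $(p+q)^2\le 2p^2+2q^2$, applied with $p=|\dL|$ and $q=\Ltwo\dx+\Ltwo\dxt$, lets the $\dL$-term generate the summand $2\Gamma^2$ and the $\dx,\dxt$-terms the summand $\tfrac38$ under the square root.

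I expect the main obstacle to be the quantitative bookkeeping rather than the overall scheme. Two points are delicate: first, the mean-correction needed to keep $\dx$ in $\dX$ unavoidably couples $\bar\lambda$ and $\lambda_0$, and the resulting cross terms must be absorbed by $\dL$ exactly — not merely up to harmless constants — if the clean value $\sqrt{\tfrac38+2\Gamma^2}$ is to survive the scale-invariant quotient; second, the coercivity of the numerator is an $L^2$ statement obtained from the pointwise ($L^\infty$) bound $b\ge c-R$, so one must keep all estimates in the $L^2$-geometry in which the inf-sup constant is measured, in line with the two-norm discrepancy noted after \eqref{eq:sufficient-conditions}. Fixing the proportionality in $\dxt$ and the definition of $\dL$ so that the Wirtinger constant and the length bounds assemble into exactly $\tfrac38$ and $2\Gamma^2$ is the part that will require the most care.
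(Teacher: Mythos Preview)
Your proposal is correct and follows essentially the same route as the paper: split $\lambda$ into its mean $\bar\lambda$ and mean-zero part $\tilde\lambda$, choose $\dxt=\tfrac12\tilde\lambda\,u$ (no additional constant correction is needed since $\tilde\lambda$ already has mean zero---the cross term $\bar\lambda\,\overline{b\tilde\lambda}$ arises from the non-constancy of $b=u^T\xt$, not from any correction), and set $\dL=\tfrac{1}{2L}\big(\overline{b\tilde\lambda}-(c-R)\bar\lambda\big)$ to obtain $\langle\lambda,h'(z)[\dz]\rangle\ge(c-R)\Ltwo\lambda^2$; the denominator bound then follows from Wirtinger and the length estimates exactly as you outline.
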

\begin{proof}
    For $f\in L^2(0,1)$ we define
    \[
        \overline f := \int_0^1 f \, d\tau \in \R \quad \text{and} \quad \tilde f = f - \overline{f},
    \]
    respectively, such that $(\overline f, \tilde f)_{L^2(0,1)} = 0$ and
    \[
        \Ltwo{f}^2 = \Ltwo{\tilde f + \overline f}^2
        = \Ltwo{\tilde f}^2 + \overline f^2.
    \]
    With
    \begin{align} \label{eq:b-bound}
        \frac{\ov + \oc_0}{\ov - \oc_0}\tilde L + R
        \underset{\eqref{eq:L-bounds}}{\ge} \glob L + R
        \ge b := \xi_\tau^T u
        \ge c - R
    \end{align}
    we choose $\dxt = \frac{1}{2}\tilde \lambda u$ and $\dL = \frac{1}{2L}\left(\overline{b\tilde\lambda}- (c-R) \overline{\lambda}\right)$. Note that $\dx\in\dX$ holds.
    For this choice, we obtain for $\dz=(\dL,\dx)$
    \begin{alignat*}{2}
        \langle \lambda, h'(z)[\dz]\rangle
        &=&& \int_0^1 (2\xt^T \dxt \lambda - 2 L \dL \lambda) \, d\tau \\
        &=&& \int_0^1 b\tilde\lambda \lambda \, d\tau - 2 L \dL \overline{\lambda} \\
        &=&& \int_0^1 (b\tilde\lambda^2 + b\tilde\lambda\overline{\lambda}) \,d\tau - 2 L \dL \overline{\lambda} \\
        \underset{\eqref{eq:b-bound}}&{\ge}&& (c-R) \, \Ltwo{\tilde\lambda}^2
            + \left(\int_0^1 b\tilde\lambda\,d\tau - 2 L \dL\right)\overline{\lambda} \\
        &=&& (c-R) \, \Ltwo{\tilde\lambda}^2
            + \left(\int_0^1 b\tilde\lambda\,d\tau - \overline{b\tilde\lambda} + (c-R) \overline\lambda \right)\overline{\lambda} \\
        &=&& (c-R) \left( \Ltwo{\tilde\lambda}^2 + \overline{\lambda}^2\right) \\
        &=&& (c-R) \, \Ltwo{\lambda}^2.
    \end{alignat*}
    Moreover, we have
    \[
        \Ltwo{\dxt} \le \frac{1}{2}  \Ltwo{\tilde\lambda}
    \]
    and, since clearly $c \le \glob L$,
    \begin{alignat*}{2}
        |\dL|
        &\le &&\frac{1}{2 L} \left( \Ltwo{b} \Ltwo{\tilde\lambda} + (c-R)  |\overline\lambda| \right) \\
        \underset{\eqref{eq:b-bound}}&{\le} &&\frac{1}{\tilde L} \left( (\glob L + R) \Ltwo{\tilde\lambda} + (c-R) |\overline\lambda| \right) \\
        &\le &&\left(\frac{\ov + \oc_0}{\ov - \oc_0} + \frac{R}{\tilde L}\right)  \left( \Ltwo{\tilde\lambda} + |\overline\lambda| \right),
    \end{alignat*}
    which implies
    \begin{alignat*}{2}
        \Ztwo{\dz}^2
        \underset{\eqref{eq:ztwo-norm}}&{=} 
            &&\Ltwo{\dx}^2 + \Ltwo{\dxt}^2 + \dL^2 \\
        \underset{\eqref{eq:integral-dx2-bound}}&{\le} 
            &&\left(1+\frac1\pi\right) \Ltwo{\dxt}^2 + \dL^2 \\
        &\le 
            &&\frac14\left(1+\frac1\pi\right) \Ltwo{\tilde\lambda}^2
            + \left(\frac{\ov + \oc_0}{\ov - \oc_0} + \frac{R}{\tilde L}\right)^2 \left(\Ltwo{\tilde\lambda} + \overline\lambda \right)^2 \\
        &\le 
            &&\frac14\left(1+\frac1\pi\right)\Ltwo{\tilde\lambda}^2
            + 2 \left(\frac{\ov + \oc_0}{\ov - \oc_0} + \frac{R}{\tilde L}\right)^2 \Ltwo{\tilde\lambda}^2
            + 2 \left(\frac{\ov + \oc_0}{\ov - \oc_0} + \frac{R}{\tilde L}\right)^2 \overline\lambda^2 \\
        &\le 
            &&\left[\frac14\left(1+\frac1\pi\right) + 2 \left(\frac{\ov + \oc_0}{\ov - \oc_0} + \frac{R}{\tilde L}\right)^2 \right] \left(
            \Ltwo{\tilde\lambda}^2
            + \overline\lambda^2
        \right)\\
        &= 
            &&\left[\frac14\left(1+\frac1\pi\right) + 2 \left(\frac{\ov + \oc_0}{\ov - \oc_0} + \frac{R}{\tilde L}\right)^2 \right] \Ltwo{\lambda}^2.
    \end{alignat*}
    Consequently,
    \[
        \langle \lambda, h'(z)[\dz]\rangle
        \ge (c-R) \left[\frac14\left(1+\frac1\pi\right) + 2 \left(\frac{\ov + \oc_0}{\ov - \oc_0} + \frac{R}{\tilde L}\right)^2\right]^{-1/2} \Ltwo\lambda \, \Ztwo\dz
    \]
    yields the claim.
\end{proof}

\subsection{Positive Definiteness of the Lagrangian}

The next step in order prove invertibility of the KKT-operator $F'(\chi)$, \eqref{eq:saddle-point-problem-short}, is to show that the second partial derivative of the Lagrangian $\L(\chi)$, \eqref{eq:lagrangian}, with respect to the state $z$ is positive definite on the kernel of the linearized constraints. On the way we derive a similar result for the objective  $T(\xi)$, \eqref{eq:travel-time} for which we first derive an upper bound for its third derivative.

\newcounter{lemma-dddf-upper-bound-stored}
\setcounter{lemma-dddf-upper-bound-stored}{\value{theorem}}
\renewcommand{\myindent}{\hspace{2.5cm}}
\begin{lemma} \label{th:dddf-upper-bound}
    Let $\Linf[\Omega]{w} \le \oc_0 \le \ov/\sqrt5$, $\Linf[\Omega]{w_x} \le \oc_1$, $\Linf[\Omega]{w_{xx}} \le \oc_2$, and $\Linf[\Omega]{w_{xxx}} \le \oc_3$ and define $\uv^2 := \ov^2 - \oc_0^2$. Then, for any $\xi\in X$, the third directional derivative of $f$ as given in~\eqref{eq:dt-dtau} is bounded by
    \begin{align}
        &\hspace{-2cm}|f'''(\xi,\xt) [\dx,\dxt]^2[\Dx,\Dxt]| \notag\\
        \le ~
        &\left(
            \og_0 \nxt \ndx^2
            + \og_2 \ndx \ndxt
            + \frac{\og_4}{\nxt} \ndxt^2
        \right)~ \nDx \notag\\
        + &\left(
            \og_1 \ndx^2
            + \frac{\og_3}{\nxt} \ndx \ndxt
            +\frac{\og_5}{\nxt^2} \ndxt^2
        \right)~ \nDxt
    \end{align}
    with $\og_i\ge0$, $i\in0,\dots,5$, given as
    \begin{align}
        \og_0 &= \frac{2}{\uv^4} \left(
            37 \oc_1^3
            +21 \oc_1 \oc_2 \uv
            +2 \oc_3 \uv^2
        \right),
        & \og_3 &= 40\frac{\oc_1}{\uv^2},
        \notag \\
        \og_1 &= \frac1{\uv^3} \left(29 \oc_1^2 + 7 \uv \oc_2\right),
        & \og_4 &= 20\frac{\oc_1}{\uv^2},
        \notag \\
        \og_2 &=\frac1{\uv^3} (57 \oc_1^2 + 13\uv \oc_2),
        & \og_5 &= 18 \frac1{\uv}.
    \end{align}
\end{lemma}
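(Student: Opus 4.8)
The plan rests on two structural features of the integrand $f$ in~\eqref{eq:dt-dtau}. First, $f$ depends on the position $\xi$ only through the wind evaluated there, so I write $f(\xi,\xt)=g(w(\xi),\xt)$, where $g(w,p)$ arises from~\eqref{eq:dt-dtau} by treating the wind vector $w$ and the velocity $p$ as independent arguments. Differentiation in a position direction then acts through the chain rule and produces the wind derivatives $w_x,w_{xx},w_{xxx}$, whereas differentiation in a velocity direction acts directly and linearly on $p$, contributing no higher derivatives of the inner map. Second, $g$ is positively homogeneous of degree one in $p$; hence $g_w,g_{ww},g_{www}$ are homogeneous of degree one, $g_p,g_{wp},g_{wwp}$ of degree zero, $g_{pp},g_{wpp}$ of degree $-1$, and $g_{ppp}$ of degree $-2$. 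This homogeneity alone dictates the powers of $\nxt$ attached to the six monomials in the claim.

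First I would expand $f'''(\xi,\xt)[\dx,\dxt]^2[\Dx,\Dxt]$ by repeated application of the chain and product rules. Every resulting term is one partial derivative of $g$ contracted with suitable wind derivatives and with the directions. I would then sort the terms by two criteria: whether the distinguished slot $[\Dx,\Dxt]$ is a position derivative (factor $\nDx$) or a velocity derivative (factor $\nDxt$), and how the two repeated slots $[\dx,\dxt]$ split into position and velocity (factors $\ndx^2$, $\ndx\ndxt$, or $\ndxt^2$). These choices yield precisely the six monomials in the statement. The attached power of $\nxt$ is then forced by the homogeneity degree of the $g$-partial in each class, and the wind factors $\oc_1^3$, $\oc_1\oc_2$, $\oc_3$, $\oc_1^2$, $\oc_2$, $\oc_1$ arise from how the position differentiations distribute over $w_x$, $w_{xx}$, $w_{xxx}$ — exactly the internal structure visible in the stated $\og_i$. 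For instance, the all-position class $\ndx^2\nDx$ collects a $g_{www}$ term ($\sim\oc_1^3$), a $g_{ww}$ term ($\sim\oc_1\oc_2$), and a $g_{w}$ term ($\sim\oc_3$), all of homogeneity degree one, which reproduces the three summands of $\og_0$.

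The analytical core, and the main obstacle, is to prove sharp pointwise bounds $|g_{\bullet}(w,p)|\le M_\bullet\,\nxt^{\,d}$, with $d$ the homogeneity degree, uniformly for $\|w\|\le\oc_0$. Here I would use the explicit form of~\eqref{eq:dt-dtau}: writing $D:=\ov^2-w^Tw$ and $Q:=(p^Tw)^2+D\,(p^Tp)$, the decisive estimates are $D\ge\uv^2$ together with the lower bound $\sqrt{Q}\ge\uv\,\nxt$, which tames every negative power of $Q$ generated when the square root is differentiated in $p$. The hypothesis $\oc_0\le\ov/\sqrt5$ enters precisely here: it gives $\ov^2\le\tfrac54\uv^2$ and $\oc_0^2\le\tfrac14\uv^2$, so that $1/D$ and the numerator term $p^Tw$ can be controlled against powers of $\uv$ alone. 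Differentiating $N:=-p^Tw+\sqrt{Q}$ and $1/D$ repeatedly and recombining by the quotient rule expresses each $M_\bullet$ as an explicit expression in $\oc_0,\oc_1$ and negative powers of $\uv$; this step is lengthy but elementary.

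Finally I would insert $\|w_x\|\le\oc_1$, $\|w_{xx}\|\le\oc_2$, $\|w_{xxx}\|\le\oc_3$, sum the contributions within each of the six classes, and read off $\og_0,\dots,\og_5$. The one point requiring care is that several distinct expansion terms land in the same class and must be added, so the symmetry multiplicities from the third derivative have to be tracked exactly to recover the precise numerical coefficients; the homogeneity bookkeeping guarantees that no monomials beyond the six listed can occur.
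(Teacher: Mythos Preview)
Your approach is sound and reaches the same destination as the paper, but organizes the computation differently. The paper splits $f=f_1+f_2$ additively into the tailwind term $f_1=-(\xt^Tw)/g$ and the length term $f_2=g^{-1}\sqrt{F}$ with $g=\ov^2-w^Tw$ and $F=(\xt^Tw)^2+g\,\xt^T\xt$, writes out the full third directional derivatives of $f_1$ and $f_2$ term by term, bounds each summand using $g\ge\uv^2$ and $F\ge\uv^2\nxt^2$, then adds the two bounds and simplifies via $\oc_0/\uv\le\tfrac12$, $\oov/\uv\le\sqrt{3/2}$, $\ov/\uv\le\sqrt5/2$ before rounding up to the stated $\og_i$. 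Your route---factoring $f(\xi,\xt)=g(w(\xi),\xt)$, expanding by the chain rule, and sorting by the position/velocity split of the three slots---is more conceptual: the positive homogeneity of $g$ in $p$ predicts the $\nxt$-powers a priori, and the Fa\`a di Bruno bookkeeping explains the wind-derivative pattern $\oc_1^3,\oc_1\oc_2,\oc_3,\ldots$ before any hard estimates are done. The analytical core (your $D\ge\uv^2$, $\sqrt{Q}\ge\uv\nxt$, and the reduction of $\oc_0,\ov$ to powers of $\uv$ via $\oc_0\le\ov/\sqrt5$) is identical to the paper's, so both routes land on the same pre-rounding coefficients. One small slip: your $M_\bullet$ should depend on $\oc_0$ and $\uv$ only, not on $\oc_1$; the $\oc_i$ for $i\ge1$ enter solely through the outer chain-rule factors $w_x,w_{xx},w_{xxx}$, as you yourself note a paragraph later.
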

The proof can again be found in the appendix. With this result we can derive a bound for the third directional derivative of $T$.

\begin{theorem} \label{th:dddT-upper-bound}
    Let $(\glob{L}, \glob{\xi})$ be a global minimizer of~\eqref{eq:reduced-problem} and define $\tilde L := \|x_D-x_O\|$ and $\Dx := \xi-\glob{\xi}$. Moreover, let $\|w(p)\| \le \oc_0 \le \ov / \sqrt5$, $\|w_x(p)\| \le \oc_1$, $\|w_{xx}(p)\| \le \oc_2$, and $\|w_{xxx}(p)\| \le \oc_3$ for every $p\in\Omega$. Then, for any $\xi\in X$ with $\Xinf\Dx \le R < \tilde L$, it holds that
    \begin{equation} \label{eq:dddT-upper-bound}
        |T'''(\xi)[\dx]^2[\Dx]| \le \oG \left(\Ltwo\dx^2 + \Ltwo\dxt^2\right) \Cnorm\Dx \, .
    \end{equation}
    with $\Cnorm\Dx = \Linf\Dx + \Linf\Dxt$ and
    \begin{align} \label{eq:oG}
        \oG := 	\max
        \bigg\{
            &\left(\frac{\ov+\oc_0}{\ov-\oc_0} \tilde L + R\right) \og_0 + \frac{\og_2}{2}, \quad
            \frac{\og_4}{\tilde L - R} + \frac{\og_2}{2}, \notag\\
            &\og_1 + \frac{\og_3}{2(\tilde L - R)}, \quad
            \frac{\og_3}{2(\tilde L - R)} + \frac{\og_5}{(\tilde L - R)^2}
        \bigg\}
    \end{align}
    and $\og_0,\dots,\og_5$ as given in \Cref{th:dddf-upper-bound} above.
\end{theorem}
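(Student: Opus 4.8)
The plan is to reduce the assertion to the pointwise estimate of \Cref{th:dddf-upper-bound} and then handle only norm bookkeeping. Since $T(\xi)=\int_0^1 f(\xi,\xt)\,d\tau$ by~\eqref{eq:travel-time}, differentiating three times and applying the chain rule to the integrand $f$ from~\eqref{eq:dt-dtau} gives
\begin{equation*}
    T'''(\xi)[\dx]^2[\Dx] = \int_0^1 f'''(\xi,\xt)[\dx,\dxt]^2[\Dx,\Dxt]\,d\tau ,
\end{equation*}
where each lifted direction carries its derivative; the interchange of differentiation and integration is justified by the $C^3$-regularity of $w$ and the Fréchet differentiability already established. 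Taking absolute values under the integral and inserting the pointwise bound of \Cref{th:dddf-upper-bound} leaves an integrand that is a sum of six monomials in $\ndx,\ndxt$, each weighted by $\nDx$ or $\nDxt$ and by a coefficient that depends on $\nxt$ only through the factors $\nxt$, $1/\nxt$, and $1/\nxt^2$.

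First I would bound the speed $\nxt$ uniformly in $\tau$. Writing $\xt = \glob\xt + \Dxt$ and using $\|\glob\xt\|=\glob L$ almost everywhere together with $\Linf\Dxt \le \Xinf\Dx \le R$, the triangle inequality yields $\glob L - R \le \nxt \le \glob L + R$ almost everywhere. Feeding in the instance-independent estimate $\tilde L \le \glob L \le \frac{\ov+\oc_0}{\ov-\oc_0}\tilde L$ from \Cref{th:L-bounds} turns this into
\begin{equation*}
    \tilde L - R \le \nxt \le \frac{\ov+\oc_0}{\ov-\oc_0}\tilde L + R ,
\end{equation*}
and the hypothesis $R<\tilde L$ keeps the lower bound strictly positive. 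Hence $\og_0\nxt \le \og_0\big(\frac{\ov+\oc_0}{\ov-\oc_0}\tilde L + R\big)$, while $1/\nxt \le 1/(\tilde L - R)$ and $1/\nxt^2 \le 1/(\tilde L - R)^2$ control the remaining coefficients.

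Next I would absorb the two mixed terms $\og_2\ndx\ndxt$ and $\frac{\og_3}{\nxt}\ndx\ndxt$ with Young's inequality $\ndx\ndxt \le \tfrac12(\ndx^2+\ndxt^2)$, splitting each symmetrically while keeping the $\nDx$-weighted and $\nDxt$-weighted blocks separate. After inserting the speed bounds the integrand collapses to
\begin{equation*}
    \nDx\big(A_1\ndx^2 + A_2\ndxt^2\big) + \nDxt\big(A_3\ndx^2 + A_4\ndxt^2\big),
\end{equation*}
where $A_1,\dots,A_4$ are exactly the four expressions in the maximum defining $\oG$ in~\eqref{eq:oG}. Estimating $\nDx \le \Linf\Dx$ and $\nDxt \le \Linf\Dxt$ pointwise, integrating in $\tau$ to replace $\ndx^2,\ndxt^2$ by $\Ltwo{\dx}^2,\Ltwo{\dxt}^2$, and finally using $A_i \le \oG$ gives
\begin{equation*}
    |T'''(\xi)[\dx]^2[\Dx]| \le \oG\big(\Ltwo{\dx}^2 + \Ltwo{\dxt}^2\big)\big(\Linf\Dx + \Linf\Dxt\big),
\end{equation*}
which is the claim once $\Linf\Dx + \Linf\Dxt$ is identified with $\Cnorm\Dx$.

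The analytically heavy part -- the explicit six-term estimate of $f'''$ -- is already isolated in \Cref{th:dddf-upper-bound}, so what remains is essentially careful bookkeeping. The one delicate point is the regrouping in the third step: each mixed term must be halved by Young's inequality and its pieces routed so that the coefficient of $\ndx^2$ inside the $\nDx$-block becomes precisely $\big(\frac{\ov+\oc_0}{\ov-\oc_0}\tilde L + R\big)\og_0 + \frac{\og_2}{2}$, and analogously for $A_2,A_3,A_4$, matching the four arguments of the maximum rather than merging them early. Preserving the separation of the $\nDx$- and $\nDxt$-weighted blocks throughout is exactly what permits the concluding factorisation into the full $C^{0,1}$-norm $\Cnorm\Dx$.
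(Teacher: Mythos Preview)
Your proposal is correct and follows essentially the same route as the paper: integral representation of $T'''$, the pointwise six-term bound from \Cref{th:dddf-upper-bound}, uniform two-sided control of $\nxt$ via $\|\glob\xt\|=\glob L$ and \Cref{th:L-bounds}, Young's inequality on the mixed terms, and extraction of $\Linf\Dx,\Linf\Dxt$ to factor out $\Cnorm\Dx$. The only cosmetic difference is ordering (the paper pulls out the $L^\infty$ factors before applying the speed bounds and Young, you do it afterwards), and you derive the speed bound directly rather than citing \eqref{eq:general-bounds}.
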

\begin{proof}
    From the definition of $T$ in \eqref{eq:travel-time}, we know that
    \begin{align*}
        T'''(\xi)[\dx]^2[\Dx]
        &=\int_0^1 \, f'''(\xi,\xt) [\dx,\dxt]^2[\Dx,\Dxt] d\tau.
    \end{align*}
    Inserting the bound from \Cref{th:dddf-upper-bound,th:general-bounds} above and using Young's inequality yields
    \begin{alignat*}{2}
        &&&|T'''(\xi)[\dx]^2[\Dx]|
        \\
        & \le &&\int_0^1 ~ \left(
            \og_0 \nxt \ndx^2
            + \og_2 \ndx \ndxt
            + \frac{\og_4}{\nxt} \ndxt^2
        \right)~ \nDx \\
        &&&\hspace{0.5cm}+ \left(
            \og_1 \ndx^2
            + \frac{\og_3}{\nxt} \ndx \ndxt
            + \frac{\og_5}{\nxt^2} \ndxt^2
        \right)~ \nDxt ~d\tau.
        \\
        & \le &&\Linf[]\Dx \int_0^1
            \og_0 \nxt \ndx^2
            + \og_2 \ndx \ndxt
            + \frac{\og_4}{\nxt} \ndxt^2
        d\tau \\
        &&& + \Linf[]\Dxt \int_0^1
            \og_1 \ndx^2
            + \frac{\og_3}{\nxt} \ndx \ndxt
            + \frac{\og_5}{\nxt^2} \ndxt^2
        d\tau
        \\
        \underset{\eqref{eq:general-bounds}}&{\le}
            &&\Linf[]\Dx \int_0^1
                \left(\frac{\ov+\oc_0}{\ov-\oc_0} \tilde L + R\right) \og_0 \ndx^2
                + \og_2 \ndx \ndxt
                + \frac{\og_4}{\tilde L - R} \ndxt^2
            d\tau \\
        &&& + \Linf[]\Dxt \int_0^1
            \og_1 \ndx^2
            + \frac{\og_3}{\tilde L - R} \ndx \ndxt
            + \frac{\og_5}{(\tilde L - R)^2} \ndxt^2
        d\tau
        \\
        \underset{\text{(Y)}}&{\le}
            &&\Linf[]\Dx \left[
                \left(
                    \left(\frac{\ov+\oc_0}{\ov-\oc_0} \tilde L
                    + R\right) \og_0 + \frac{\og_2}{2}
                \right) \Ltwo[]{\dx}^2
                + \left(
                    \frac{\og_4}{\tilde L - R}
                    + \frac{\og_2}{2}
                \right) \Ltwo[]{\dxt}^2
            \right]\\
            &&&+
            \Linf[]\Dxt \left[
                \left(\og_1 + \frac{\og_3}{2(\tilde L - R)} \right) \Ltwo[]{\dx}^2
                + \left(\frac{\og_3}{2(\tilde L - R)} + \frac{\og_5}{(\tilde L - R)^2}\right) \Ltwo[]{\dxt}^2
            \right]
        \\
        \underset{\eqref{eq:oG}}&{\le}
            &&\oG \left(\Ltwo\dx^2 + \Ltwo\dxt^2\right) ~ \Cnorm\Dx.
    \end{alignat*}
\end{proof}

Having bounded the third derivative of $T$, we can estimate the potential decay of $T''$ and thus derive a lower bound for the size of this neighborhood. Similarly, we can bound $h''$ and hence $\L_{zz}$.

\begin{theorem} \label{th:Lzz-lower-bound-neighborhood}
    Let $\Linf[\Omega]{w} \le \oc_0 < \ov/\sqrt5$, $\Linf[\Omega]{w_x} \le \oc_1$, $\Linf[\Omega]{w_{xx}} \le \oc_2$, and $\Linf[\Omega]{w_{xxx}} \le \oc_3$ and define $\tilde L := \|x_D-x_O\|$.
    Moreover, let $\glob\chi := (\glob z, \glob\lambda)$ be a globally optimal solution to problem~\eqref{eq:reduced-problem}, that satisfies the necessary and sufficient conditions~\eqref{eq:necessary-conditions}, \eqref{eq:inf-sup-condition}, and~\eqref{eq:sufficient-conditions} with $\uB>0$.
    Then there is a $0 < R < \min\left\{ \frac{\uB}{2\oG},\; \frac{\uB}{40},\; \frac{\tilde L}2 \right\}$ with $\oG$ from \Cref{th:dddT-upper-bound} such that
    \begin{align} \label{eq:ddT-lower-bound-neighborhood}
        \L_{zz}(\chi)[\dz]^2 \ge \frac\uB4 \Ztwo{\dz}^2
    \end{align}
    holds for any $\chi\in \Nhood[\glob\chi]{}$ and any $\dz\in \dZ$ such that $\xt^T \dxt = L\dL$ holds almost everywhere.
\end{theorem}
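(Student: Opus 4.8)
The plan is to lower-bound
\[
    \L_{zz}(\chi)[\dz]^2 = T''(\xi)[\dx]^2 + 2\int_0^1 \lambda\left(\dxt^T\dxt - \dL^2\right) d\tau
\]
by transferring everything to the minimizer $\glob\chi$, where the sufficient condition~\eqref{eq:sufficient-conditions} and $\glob\lambda = 0$ (\Cref{th:lagrange-multipliers-vanish}) are available, and then showing that each error term is controlled linearly in $R$. First I would replace $T''(\xi)$ by $T''(\glob\xi)$: writing $\Dx = \xi - \glob\xi$ and integrating the third-derivative bound of \Cref{th:dddT-upper-bound} along the segment $\glob\xi + s\Dx$, $s\in[0,1]$ (which stays admissible since $R < \tilde L/2$ guarantees $\Cnorm\Dx \le R < \tilde L$), yields
\[
    \left|T''(\xi)[\dx]^2 - T''(\glob\xi)[\dx]^2\right| \le \oG\, R\left(\Ltwo{\dx}^2 + \Ltwo{\dxt}^2\right) \le \oG\, R\,\Ztwo{\dz}^2 ,
\]
so that $R < \uB/(2\oG)$ costs at most $\tfrac{\uB}{2}\Ztwo{\dz}^2$. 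The multiplier term is handled directly: since $\Linf{\lambda}\le R$ by \Cref{th:general-bounds}, it is bounded in absolute value by $2R\left(\Ltwo{\dxt}^2 + \dL^2\right)\le 2R\,\Ztwo{\dz}^2$, which is small once $R < \uB/40$.

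The heart of the proof — and the step I expect to be the main obstacle — is the mismatch between the two kernels: \eqref{eq:sufficient-conditions} gives coercivity of $T''(\glob\xi)$ only on $\{\dz : \dxt^T\glob\xt = \dL\glob L\text{ a.e.}\}$, whereas the given $\dz$ lies in the perturbed kernel $\{\dz : \xt^T\dxt = L\dL\text{ a.e.}\}$. Since $\xt,L$ differ from $\glob\xt,\glob L$ only by $O(R)$ in $L^\infty$, the violation $\dxt^T\glob\xt - \dL\glob L = (\glob\xt-\xt)^T\dxt + (L-\glob L)\dL$ is pointwise of size $O\!\left(R(\|\dxt\| + |\dL|)\right)$. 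I would remove it by correcting $\dz$ along a \emph{reparametrization direction} $\dx^{\mathrm{rep}} = \eta\,\glob\xt$ with $\eta\in H^1_0(]0,1[)$: exploiting $\|\glob\xt\| = \glob L$ (hence $(\glob\xt)^T\glob\xi_{\tau\tau} = 0$), one can choose $\eta$, together with an accompanying scalar shift of $\dL$, so that the corrected direction $\dz_0 = \dz + \dz^{\mathrm{rep}}$ satisfies the unperturbed constraint exactly.

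The decisive point is that such reparametrization directions lie in the \emph{null space} of $T''(\glob\xi)$. Indeed, $T$ is reparametrization-invariant, so differentiating $s\mapsto T(\glob\xi\circ r_s)$ twice and using $T'(\glob\xi)=0$ from~\eqref{eq:necessary-conditions-unconstrained} gives $T''(\glob\xi)[\eta\glob\xt]^2 = 0$; and because $\glob\xi$ also minimizes the unconstrained problem, $T''(\glob\xi)$ is positive semidefinite on $\dX$, so the vanishing diagonal forces $T''(\glob\xi)[\eta\glob\xt][\,\cdot\,] = 0$. Consequently $T''(\glob\xi)[\dx_0]^2 = T''(\glob\xi)[\dx]^2$ holds \emph{exactly}, and applying~\eqref{eq:sufficient-conditions} to $\dz_0$ gives $T''(\glob\xi)[\dx]^2 \ge \uB\,\Ztwo{\dz_0}^2$ with no loss from the Hessian. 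This is what lets the argument sidestep the two-norm discrepancy, since it never requires a quantitative $L^2$-continuity estimate of $T''$.

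It then remains to estimate $\Ztwo{\dz^{\mathrm{rep}}}$. Here $\eta_\tau = (\glob L)^{-2}\,\widetilde{(\glob\xt-\xt)^T\dxt}$, and combining Wirtinger's inequality~\eqref{eq:integral-dx2-bound}, the bound $\glob L \ge \tilde L$ from \Cref{th:L-bounds}, and a curvature bound $\Linf{\glob\xi_{\tau\tau}}$ for the smooth optimal trajectory, one obtains $\Ztwo{\dz^{\mathrm{rep}}} \le C\,R\,\Ztwo{\dz}$ with an explicit constant $C$, whence $\Ztwo{\dz_0}^2 \ge (1-CR)^2\Ztwo{\dz}^2$. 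Collecting the Hessian-transfer, multiplier, and kernel-correction contributions and invoking the three bounds on $R$ then leaves at least $\tfrac{\uB}{4}\Ztwo{\dz}^2$, which is the claim. The genuine difficulty is thus concentrated entirely in the kernel correction: constructing $\dz^{\mathrm{rep}}$ explicitly, verifying that it is a true null direction of the Hessian, and bounding its $\Ztwo$-norm cleanly by $O(R)$.
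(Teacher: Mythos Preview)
Your approach is more careful than the paper's and takes a genuinely different route. The paper's proof writes
\[
    T''(\xi)[\dx]^2 = T''(\glob\xi)[\dx]^2 + \int_0^1 T'''(\glob\xi+\nu\Dx)[\dx]^2[\Dx]\,d\nu
\]
and then invokes \eqref{eq:sufficient-conditions} \emph{directly} on the given $\dz$, i.e., asserts $T''(\glob\xi)[\dx]^2\ge\uB\Ztwo\dz^2$ without any adjustment. The kernel mismatch you single out --- that $\dz$ satisfies $\xt^T\dxt=L\,\dL$ rather than $(\glob\xt)^T\dxt=\glob L\,\dL$ --- is simply not addressed. The paper also treats the multiplier contribution differently: instead of your direct bound $|2\!\int\lambda(\dxt^T\dxt-\dL^2)\,d\tau|\le 2R\Ztwo{\dz}^2$, it substitutes $\dL=\xt^T\dxt/L$ from the perturbed-kernel identity and estimates the integral from below by $-10R\Ltwo\dxt^2$, which is exactly the source of the constraint $R<\uB/40$.

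Your reparametrization-correction idea is sound and does close the gap the paper leaves open; the mechanism that $\eta\glob\xt$ is a null direction of $T''(\glob\xi)$ (from reparametrization invariance, $T'(\glob\xi)=0$, and positive semidefiniteness of $T''(\glob\xi)$ on all of $\dX$) is correct. Two points to keep honest. First, you tacitly use $\glob\xi\in W^{2,\infty}$ both to form $(\eta\glob\xt)_\tau=\eta_\tau\glob\xt+\eta\glob\xi_{\tau\tau}$ and in the second-variation term $\eta^2\glob\xi_{\tau\tau}$; this regularity follows from the Euler--Lagrange equations since $w\in C^3$, but it is not stated in the paper and must be argued. Second, the correction introduces a new constant $C$ (depending on $\Linf{\glob\xi_{\tau\tau}}$ and $\tilde L$), and you need $CR$ small to retain $(1-CR)^2\uB-\tfrac\uB2-2R\ge\tfrac\uB4$; this is an extra restriction on $R$ not implied by the three listed in the theorem. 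What your route buys is a rigorous handling of the two-kernel issue; what the paper's shortcut buys is precisely the stated range of $R$, at the price of glossing over that issue.
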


\begin{proof}
    Let $\Dx := \xi-\glob{\xi}$ and note that $\Linf\Dx \le
    \Zinf\Dz \le R < \frac{\uB}{2\oG}$. Then we obtain
    \begin{alignat*}{2}
        T''(\xi)[\dx,\dxt]^2
        &=
            &&T''(\glob{\xi})[\dx,\dxt]^2
            + \int_0^1 T'''(\xi+\nu\Dx)[\dx,\dxt]^2[\Dx,\Dxt]\,d\nu
        \\
        \underset{\eqref{eq:sufficient-conditions}}&{\ge}
            &&\uB \Ztwo{\dz}^2
            + \int_0^1 T'''(\xi+\nu\Dx)[\dx,\dxt]^2[\Dx,\Dxt]\,d\nu
        \\
        \underset{\eqref{eq:dddT-upper-bound}}&{\ge}
            &&\uB \Ztwo{\dz}^2
            - \oG (\Ltwo\dx^2 + \Ltwo\dxt^2)~ \Zinf{\Dz}
        \\
        \underset{\eqref{eq:ztwo-norm}}&{\ge} 
            &&\uB \Ztwo{\dz}^2
            - \oG \Ztwo{\dz}^2 ~ \Zinf\Dz,
        \\
        &\ge &&\frac\uB2 \Ztwo{\dz}^2.
    \end{alignat*}
    Further, we point out that
    \begin{align} \label{eq:R-bounded}
        R \le \frac{\tilde L}2 \le \frac{\glob L}2,
    \end{align}
    which together with the bounds from \Cref{th:general-bounds} yields
    \begin{alignat*}{2}
        \langle \lambda, h''(z)[\dz]^2 \rangle
        &= &&\int_0^1 \lambda \left( \dxt^T\dxt - \dL^2 \right) d\tau \\
        &= &&\int_0^1 \lambda \left( \ndxt^2 - \left(\frac{\xt^T\dxt}{L}\right)^2 \right) d\tau \\
        &\ge &&- \Linf\lambda \left(\Ltwo{\dxt}^2 + \int_0^1 \frac{\nxt^2 \ndxt^2}{L^2} \; d\tau \right) \\
        &\ge &&- \Linf\lambda \left(\Ltwo{\dxt}^2 + \frac{\Linf{\xt}^2}{L^2} \int_0^1 \ndxt^2 \; d\tau \right) \\
        \underset{\eqref{eq:general-bounds}}&{\ge}
            &&- R \left( \Ltwo{\dxt}^2 + \frac{(\glob L + R)^2}{(\glob L - R)^2} \Ltwo{\dxt}^2 \right) \\
        &\ge &&- R \left( 1 + \frac{(\glob L + R)^2}{(\glob L - R)^2} \right) \Ltwo{\dxt}^2 \\
        \underset{\eqref{eq:R-bounded}}&{\ge}
            &&- 10 R \Ltwo{\dxt}^2 \\
        &\ge &&- \frac{\uB}{4} \Ltwo{\dxt}^2 \\
        \underset{\eqref{eq:ztwo-norm}}&{\ge} &&- \frac{\uB}{4} \Ztwo{\dz}^2.
    \end{alignat*}
    Together, these bounds yield the claim with
    \begin{align*}
        \L_{zz}(\chi)[\dz]^2
        &= T''(\xi)[\dx]^2 + \langle \lambda, h''(z)[\dz]^2 \rangle \\
        &\ge \frac{\uB}{2} \Ztwo{\dz}^2 - \frac{\uB}{4} \Ztwo{\dz}^2 \\
        &\ge \frac{\uB}{4} \Ztwo{\dz}^2.
    \end{align*}
\end{proof}

\subsection{Upper Bound for the Lagrangian}

As a counterpart to the previous Lemma, we also derive an upper bound for $L_{zz}$ close to a minimizer.
Again we start with the underlying function $f$ in order to bound the error in the objective function $T$.

\newcounter{lemma-ddf-upper-bound-stored}
\setcounter{lemma-ddf-upper-bound-stored}{\value{theorem}}
\begin{lemma} \label{th:ddf2-upper-bound-mixed}
    Let $\Linf[\Omega]{w} \le \oc_0 \le \ov/\sqrt5$, $\Linf[\Omega]{w_x} \le \oc_1$, and $\Linf[\Omega]{w_{xx}} \le \oc_2$. Moreover, let $\uv^2 := \ov^2 - \oc_0^2$. Then, for any $\xi\in X$, the second directional derivative of $f$ as given in~\eqref{eq:dt-dtau} is bounded by
    \begin{align}
        |f''(\xi,\xt) [\dx,\dxt][\Dx,\Dxt]|
        \le
        \quad& \ob_0 \nxt \ndx \nDx \notag\\
        +& \ob_1 \left( \ndx \nDxt + \ndxt \nDx \right) \notag\\
        +& \ob_2 \nxt^{-1} \ndxt \nDxt
    \end{align}
    with
    \begin{align}
        \ob_0 = 14 \frac{\oc_1^2}{\uv^3} + 4 \frac{\oc_2}{\uv^2}, \qquad
        \ob_1 = 7\frac{\oc_1}{\uv^2}, ~ \text{and} \qquad
        \ob_2 = \frac4{\uv} .
    \end{align}
\end{lemma}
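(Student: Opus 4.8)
The plan is to carry out a direct, if lengthy, differentiation of the closed-form expression \eqref{eq:dt-dtau} and then bound each resulting term. To organize the computation I would first introduce the abbreviations $s := \ov^2 - w^Tw$, $p := \xt^T w$, and $D := p^2 + s\,\xt^T\xt$, so that $f = (-p + \sqrt{D})/s$. The two differentiation directions act very differently: a $\dx$-derivative reaches $f$ only through the wind field $w = w(\xi)$, producing a factor $w_x\dx$ (bounded by $\oc_1\ndx$) at first order and either $w_{xx}[\dx,\tdx]$ (bounded by $\oc_2\ndx\ntdx$) or a product $w_x\dx\cdot w_x\tdx$ (bounded by $\oc_1^2\ndx\ntdx$) at second order, whereas a $\dxt$-derivative hits the explicit $\xt$-dependence in $p$ and in $D$. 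This bookkeeping is what ultimately sorts the terms into the three groups appearing in the claim: the pure $\dx\,\tdx$ group (coefficient $\ob_0$, carrying $\oc_1^2$ and $\oc_2$), the mixed group (coefficient $\ob_1$, carrying a single $\oc_1$), and the wind-independent $\dxt\,\tdxt$ group (coefficient $\ob_2$).

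Before differentiating I would record the elementary bounds that control all denominators. From $\Linf[\Omega]{w}\le\oc_0$ we get $s \ge \ov^2 - \oc_0^2 = \uv^2 > 0$, and since $D \ge s\,\xt^T\xt \ge \uv^2\nxt^2$ we obtain the crucial lower bound $\sqrt{D} \ge \uv\nxt$. This is what tames every $1/\sqrt{D}$ and $1/D^{3/2}$ factor generated by differentiating the square root and converts them into the powers $\nxt^{-1}$ and $\nxt^{-2}$ displayed in the statement. Together with the upper bounds $|p|\le\oc_0\nxt$ and $\sqrt{D}\le\sqrt{\ov^2+\oc_0^2}\,\nxt$, and the assumption $\oc_0 \le \ov/\sqrt5$ (which yields $\oc_0\le\uv/2$ and $\ov^2\le\tfrac54\uv^2$), I would collapse the various combinations of $\ov$, $\oc_0$, $\oc_1$, $\oc_2$, and $s$ into the stated numerical constants at the very end.

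With these preliminaries in place, I would compute $f'[\dx,\dxt]$ and then $f''[\dx,\dxt][\tdx,\tdxt]$ by the product and chain rules, estimating each summand with the bounds above. As a useful consistency check, $f$ is positively homogeneous of degree one in $\xt$, so $f''[\dx,\dxt][\tdx,\tdxt]$ must be homogeneous of degree $+1$, $0$, and $-1$ in $\nxt$ on the three term groups respectively, which is exactly the power of $\nxt$ multiplying $\ob_0$, $\ob_1$, and $\ob_2$. This mirrors, one order lower, the computation already used for \Cref{th:dddf-upper-bound}, so I would follow the same template and relegate the full expansion to the appendix.

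The main obstacle is purely the combinatorial bookkeeping: $f''$ expands into a large number of terms, since each differentiation of $\sqrt{D}$ produces cascading $D^{-1/2}$ and $D^{-3/2}$ factors whose numerators must themselves be expanded through $p$, $s$, and their derivatives. No single estimate is deep; the difficulty lies in bounding every summand sharply enough, and then summing the numerical coefficients correctly, to land on the precise constants $14$, $4$, $7$, and $4$ rather than merely some constant. The real risk is an arithmetic slip in this accounting, which is why the detailed expansion belongs in the appendix alongside the analogous third-derivative bound.
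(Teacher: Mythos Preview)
Your plan is correct and follows essentially the same route as the paper: the paper introduces the same auxiliary quantities (there called $g=\ov^2-w^Tw$ and $F=(\xt^Tw)^2+g\,\xt^T\xt$, your $s$ and $D$), uses the same lower bounds $g\ge\uv^2$ and $F\ge\uv^2\nxt^2$, expands $f''$ explicitly, bounds each summand, and then invokes $\oc_0\le\ov/\sqrt5$ (equivalently $\oc_0/\uv\le 1/2$, $\oov/\uv\le\sqrt{3/2}$) to consolidate the coefficients into the stated constants. The only organizational difference is that the paper first splits $f=f_1+f_2$ with $f_1=-(\xt^Tw)/g$ and $f_2=g^{-1}\sqrt{F}$, derives separate intermediate bounds \eqref{eq:ddf1-upper-bound-mixed} and \eqref{eq:ddf2-upper-bound-mixed} for $f_1''$ and $f_2''$, and then adds them by the triangle inequality; this split is convenient because the same pieces are reused for the first- and third-derivative bounds, but it does not change the underlying computation you describe.
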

The proof can be found in the appendix.

\begin{theorem} \label{th:ddT-upper-bound}
    Let $\glob z = (\glob{L},\glob{\xi})$ be a global minimizer of~\eqref{eq:reduced-problem} and $\Dz := z - \glob z$. Moreover, let $\Linf[\Omega]{w} \le \oc_0 \le \bar v / \sqrt{5}$, $\Linf[\Omega]{w_x} \le \oc_1$, and $\Linf[\Omega]{w_{xx}} \le \oc_2$. Also define $\underbar{v}^2 := \ov^2 - \oc_0^2$ and $\tilde L := \|x_D-x_O\|$.
    Then, for any $z\in \Nhood[\glob z]{}$, the second directional derivative of $T$ as defined in~\eqref{eq:travel-time} is bounded by
    \begin{align} \label{eq:ddT-upper-bound}
        |T''(\xi)[\Dx]^2|
        &\le\oB \Ztwo{\Dz}^2
    \end{align}
    with $\oB := \ob_1 + \max\left\{
        \left(\frac{\ov+\oc_0}{\ov-\oc_0} \tilde L + R\right) \ob_0, \; \frac{\ob_2}{\tilde L - R}
    \right\}$ and $\ob_0,\ob_1,\ob_2$ as defined in \Cref{th:ddf2-upper-bound-mixed}.
\end{theorem}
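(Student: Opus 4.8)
The plan is to replay the proof of \Cref{th:dddT-upper-bound} one derivative order lower, using \Cref{th:ddf2-upper-bound-mixed} in place of \Cref{th:dddf-upper-bound}. Since $T(\xi)=\int_0^1 f(\xi,\xt)\,d\tau$ and the direction $\Dx$ carries along the derivative direction $\Dxt$, twofold differentiation gives
\begin{align*}
    T''(\xi)[\Dx]^2 = \int_0^1 f''(\xi,\xt)[\Dx,\Dxt]^2\,d\tau,
\end{align*}
so that $|T''(\xi)[\Dx]^2|\le\int_0^1|f''(\xi,\xt)[\Dx,\Dxt]^2|\,d\tau$. The first step is therefore to insert the pointwise estimate of \Cref{th:ddf2-upper-bound-mixed} with both direction pairs set equal to $(\Dx,\Dxt)$. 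The symmetric middle term then collapses to $2\nDx\,\nDxt$, leaving the integrand
\begin{align*}
    \ob_0\,\nxt\,\nDx^2 + 2\ob_1\,\nDx\,\nDxt + \ob_2\,\nxt^{-1}\nDxt^2.
\end{align*}

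Next I would remove the ground-speed factors $\nxt$ and $\nxt^{-1}$ using the uniform bounds available on $\Nhood[\glob z]{}$. From \Cref{th:general-bounds} we have $\glob L-R\le\Linf\xt\le\glob L+R$, and from \Cref{th:L-bounds} we have $\tilde L\le\glob L\le\frac{\ov+\oc_0}{\ov-\oc_0}\tilde L$; together these yield $\nxt\le\frac{\ov+\oc_0}{\ov-\oc_0}\tilde L+R$ for the first term and $\nxt^{-1}\le(\tilde L-R)^{-1}$ for the last. Applying Young's inequality to the cross term, $2\ob_1\nDx\,\nDxt\le\ob_1(\nDx^2+\nDxt^2)$, and integrating with $\int_0^1\nDx^2\,d\tau=\Ltwo\Dx^2$ and $\int_0^1\nDxt^2\,d\tau=\Ltwo\Dxt^2$, the coefficient of $\Ltwo\Dx^2$ becomes $\left(\frac{\ov+\oc_0}{\ov-\oc_0}\tilde L+R\right)\ob_0+\ob_1$ and that of $\Ltwo\Dxt^2$ becomes $\frac{\ob_2}{\tilde L-R}+\ob_1$. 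Pulling out the common $\ob_1$ and taking the maximum of the two remaining coefficients reproduces exactly the constant $\oB$.

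To finish, I would bound $\Ltwo\Dx^2+\Ltwo\Dxt^2\le(\nDL+\Ltwo\Dx+\Ltwo\Dxt)^2=\Ztwo\Dz^2$, since the cross terms and $\nDL\ge0$ only enlarge the right-hand side; this gives $|T''(\xi)[\Dx]^2|\le\oB\,\Ztwo\Dz^2$ as claimed. I do not expect a genuine obstacle here: the argument is the integral bookkeeping that lifts the already-established $f$-level estimate of \Cref{th:ddf2-upper-bound-mixed} to a $T$-level one. The only point demanding care is the $\nxt^{-1}$ term, which needs a strictly positive lower bound on $\|\xt\|$ and hence the (implicit) restriction $R<\tilde L$, so that $\glob L-R\ge\tilde L-R>0$. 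I would also flag that, for consistency with the analogous reciprocal-speed term $\og_4/(\tilde L-R)$ in \Cref{th:dddT-upper-bound}, the denominator in the second entry of $\oB$ ought to read $\tilde L-R$ rather than $\tilde L+R$, since only $\tilde L-R$ furnishes a valid upper bound for $\nxt^{-1}$.
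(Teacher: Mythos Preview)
Your argument is essentially identical to the paper's proof: integrate the pointwise bound from \Cref{th:ddf2-upper-bound-mixed} with both directions equal to $(\Dx,\Dxt)$, replace $\nxt$ and $\nxt^{-1}$ by the uniform bounds from \Cref{th:general-bounds} and \Cref{th:L-bounds}, split the cross term with Young's inequality, and absorb $\Ltwo\Dx^2+\Ltwo\Dxt^2$ into $\Ztwo\Dz^2$. Your flag is justified: the paper's proof writes $\frac{\ob_2}{\glob L+R}$ and then $\frac{\ob_2}{\tilde L+R}$, but the only valid upper bound on $\nxt^{-1}$ coming from \Cref{th:general-bounds} is $(\glob L-R)^{-1}\le(\tilde L-R)^{-1}$, so the second entry of $\oB$ should carry $\tilde L-R$ rather than $\tilde L+R$, exactly as you note and as the analogous term $\og_4/(\tilde L-R)$ in \Cref{th:dddT-upper-bound} does.
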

\begin{proof}
    From the definition of $T$ in~\eqref{eq:travel-time} we know that
    \begin{align*}
        T''(\xi)[\Dx,\Dxt]^2 = \int_0^1 f''[\Dx,\Dxt]^2 d\tau,
    \end{align*}
    which, together with the bounds from \Cref{th:ddf2-upper-bound-mixed,,th:general-bounds} as well as Young's inequality, then leads to
    \begin{alignat*}{2}
        |T''(\xi)[\Dx,\Dxt]^2|
        &\le
        &&\int_0^1 \left(
            \ob_0 \nxt \nDx^2
            + 2\ob_1 \nDx \nDxt
            + \frac{\ob_2}{\nxt} \nDxt^2
        \right) \, d\tau
        \\
        \underset{\eqref{eq:general-bounds}}&{\le}
            &&\ob_0 (\glob L + R) \int_0^1 \nDx^2 d\tau \\
            &&&+ 2\ob_1 \int_0^1 \nDx \nDxt d\tau \\
            &&&+ \frac{\ob_2}{\glob L - R} \int_0^1 \nDxt^2 d\tau
        \\
        \underset{\text{(Y)}}&{\le}
            &&\left( (\glob L + R) \ob_0 + \ob_1 \right) \Ltwo{\Dx}^2 \\
            &&&+ \left(\ob_1 + \frac{\ob_2}{\glob L - R} \right) \Ltwo{\Dxt}^2
        \\
        \underset{\eqref{eq:L-bounds}}&{\le}
            &&\left( \left(\frac{\ov+\oc_0}{\ov-\oc_0} \tilde L + R\right) \ob_0 + \ob_1 \right) \Ltwo{\Dx}^2 \\
            &&&+ \left(\ob_1 + \frac{\ob_2}{\tilde L - R} \right) \Ltwo{\Dxt}^2
        \\
        &\le &&\oB \left( \Ltwo{\Dx}^2 + \Ltwo{\Dxt}^2 \right)
        \\
        \underset{\eqref{eq:ztwo-norm}}&{\le} &&\oB \Ztwo{\Dz}^2.
    \end{alignat*}
\end{proof}

\begin{theorem} \label{th:Lzz-upper-bound}
    Let $\glob\chi = (\glob z, \glob\lambda)$ be a global minimizer of \eqref{eq:reduced-problem} and the corresponding Lagrange multipliers. Then for every $\chi\in\Nhood[\glob\chi]{}$ and every $\dz\in\dZ$ it holds that
    \begin{align} \label{eq:Lzz-upper-bound}
        |\L_{zz}(\chi)[\dz]^2| \le \left( \oB + R \right) \Ztwo{\dz}^2
    \end{align}
    with $\oB(R)$ from \Cref{th:ddT-upper-bound}.
\end{theorem}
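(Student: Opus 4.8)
The plan is to exploit the additive structure of the Lagrangian \eqref{eq:lagrangian} and bound the objective and constraint contributions to $\L_{zz}$ separately, recombining them with the triangle inequality. Since $\L(z,\lambda) = T(\xi) + \langle\lambda, h(z)\rangle$ and $T$ does not depend on $L$, the second $z$-derivative splits as
\[
    \L_{zz}(\chi)[\dz]^2 = T''(\xi)[\dx]^2 + \langle h''(z)[\dz]^2 \rangle,
\]
so that $|\L_{zz}(\chi)[\dz]^2| \le |T''(\xi)[\dx]^2| + |\langle h''(z)[\dz]^2 \rangle|$. It then suffices to bound each summand by a multiple of $\Ztwo{\dz}^2$.

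For the objective term I would invoke \Cref{th:ddT-upper-bound}. Although that estimate is stated for the particular direction $\Dx = \xi - \glob\xi$, its proof only uses the pointwise control of $\nxt$ on $\Nhood[\glob z]{}$ furnished by \Cref{th:general-bounds}, and hence holds verbatim for an arbitrary direction $\dz \in \dZ$; this gives $|T''(\xi)[\dx]^2| \le \oB\,\Ztwo{\dz}^2$. For the constraint term I would use $\langle h''(z)[\dz]^2\rangle = \int_0^1 \lambda\,(\dxt^T\dxt - \dL^2)\,d\tau$, exactly as in the proof of \Cref{th:Lzz-lower-bound-neighborhood}, and estimate it crudely by pulling $\lambda$ out in $L^\infty$:
\[
    |\langle h''(z)[\dz]^2\rangle|
    \le \Linf\lambda \left( \Ltwo{\dxt}^2 + \dL^2 \right)
    \le R\,\Ztwo{\dz}^2,
\]
where the first inequality follows from $\int_0^1 \dL^2\,d\tau = \dL^2$ and the triangle inequality, and the second from $\Linf\lambda \le R$ (\Cref{th:general-bounds}) together with $\Ztwo{\dz}^2 = \dL^2 + \Ltwo{\dx}^2 + \Ltwo{\dxt}^2 \ge \dL^2 + \Ltwo{\dxt}^2$.

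Adding the two estimates then yields $|\L_{zz}(\chi)[\dz]^2| \le (\oB + R)\,\Ztwo{\dz}^2$, which is the claim. There is no substantial obstacle here: the argument is a routine triangle-inequality decomposition. The only points demanding care are the observation that \Cref{th:ddT-upper-bound} transfers to a general direction, and the fact that --- unlike in \Cref{th:Lzz-lower-bound-neighborhood}, where the kernel constraint $\xt^T\dxt = L\dL$ was available --- here the estimate must hold for every $\dz\in\dZ$, so the sign-indefinite term $\dxt^T\dxt - \dL^2$ has to be bounded by the sum $\Ltwo{\dxt}^2 + \dL^2$ rather than partially cancelled.
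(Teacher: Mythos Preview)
Your proposal is correct and matches the paper's own proof essentially line for line: the same additive splitting $\L_{zz}=T''+\langle\lambda,h''\rangle$, the same appeal to \Cref{th:ddT-upper-bound} for the $T''$ term, and the same $\Linf\lambda\le R$ estimate for the constraint term. Your remark that \Cref{th:ddT-upper-bound} is stated for $\Dx=\xi-\glob\xi$ but applies to any direction is apt; the paper uses it in exactly this way without comment.
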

\begin{proof}
    Using the bound from \Cref{th:ddT-upper-bound} and Young's inequality, we get
    \begin{alignat*}{2}
        |\L_{zz}(\chi)[\dz]^2|
            &= &&|T''(\xi)[\dx]^2 + \langle \lambda, h''(z)[\dz]^2 \rangle| \\
            \underset{\eqref{eq:ddT-upper-bound}}&{\le}
                &&\oB \Ztwo{dz}^2
                + \int_0^1 |\lambda \left( \dxt^T\dxt - \dL^2 \right)| \; d\tau \\
            &\le &&\oB \Ztwo{dz}^2
                + \Linf\lambda \left( \Ltwo{\dxt}^2 + \dL^2 \right) \\
            \underset{\eqref{eq:general-bounds}}&{\le}
                &&\oB \Ztwo{dz}^2
                + R \left( \Ltwo{\dxt}^2 + \dL^2 \right) \\
            \underset{\eqref{eq:ztwo-norm}}&{\le}
                &&\left( \oB + R \right) \Ztwo{dz}^2 .
    \end{alignat*}
\end{proof}

\subsection{Invertibility of the KKT-Operator}

Using the previous three results, which together state the existence of a neighborhood around a minimizer such that the LBB-conditions are satisfied, we are now ready to prove that the KKT-operator $F'$ is invertible.

\begin{lemma} \label{th:Finv-upper-bound}
    Let $\glob\chi = (\glob z, \glob\lambda)$ be a global minimizer of \eqref{eq:reduced-problem}, that satisfies the first and second order conditions for optimality with some $\uB > 0$, and the corresponding Lagrange multipliers.
    Further, let there be a $u$ with $\|u\|=1$ such that $u^T \glob\xt \ge c > 0$ for almost all $\tau\in(0,1)$.
    Then for $F$ as given in \eqref{eq:newton-function} it holds that
    \begin{align}
        \|F'(\chi)^{-1}\|_{(Y^2)^* \to Y^2} \le \omega_1
    \end{align}
    for every $\chi=(z,\lambda)\in\Nhood[\glob\chi]{}$ and
    \begin{align}
        \omega_1 = \sqrt2 \max\left\{
            \frac4\uB, \;
            \frac1\kappa \left(1+\frac{4(\oB+R)}{\uB}\right), \;
            \frac{\oB+R}{\kappa^2}
        \right\}
    \end{align}
    and $\oB(R)$ and $\kappa(R)$ as given in \Cref{th:ddT-upper-bound} and \Cref{th:inf-sup-condition}, respectively.
\end{lemma}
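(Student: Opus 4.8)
The plan is to recognise $F'(\chi)$ as a saddle-point operator of the classical KKT block form
\[
    F'(\chi) = \begin{pmatrix} \L_{zz}(\chi) & h'(z)^* \\ h'(z) & 0 \end{pmatrix},
\]
acting between $\dZ\times\Lambda^*$ and its dual, precisely as spelled out componentwise in \eqref{eq:saddle-point-problem}, and then to invoke the quantitative Brezzi (LBB) stability theory. The three ingredients it requires have all been secured on $\Nhood[\glob\chi]{}$: the $(1,1)$-block $\L_{zz}(\chi)$ is coercive on the kernel of $h'(z)$ with constant $\uB/4$ by \Cref{th:Lzz-lower-bound-neighborhood}, it is bounded by $\oB+R$ by \Cref{th:Lzz-upper-bound}, and the off-diagonal block $h'(z)$ satisfies the inf-sup condition with constant $\kappa$ by \Cref{th:inf-sup-condition} (whose hypotheses, the unit direction $u$ and $R<c$, are exactly the assumptions made here). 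For brevity I abbreviate $\alpha := \oB + R$ and $\beta := \uB/4$.

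First I would solve $F'(\chi)[\Dz,\Dl] = (r_1,r_2)$ for an arbitrary right-hand side by the standard two-step splitting. The inf-sup property makes $h'(z)$ surjective onto $\Lambda$ with a right inverse bounded by $1/\kappa$, so I fix a particular $\Dz_0\in\dZ$ with $h'(z)[\Dz_0]=r_2$ and $\Ztwo{\Dz_0}\le \tfrac1\kappa\Ltwo{r_2}$. Writing $\Dz=\Dz_0+\Dz_1$ with $\Dz_1$ in the kernel of $h'(z)$, the first block equation tested against kernel directions becomes a coercive problem for $\Dz_1$, uniquely solvable by \Cref{th:Lzz-lower-bound-neighborhood} together with Lax--Milgram, giving $\Ztwo{\Dz_1}\le \tfrac1\beta(\|r_1\|+\alpha\Ztwo{\Dz_0})$. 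Summing yields the state estimate $\Ztwo{\Dz}\le\tfrac1\beta\|r_1\|+\tfrac1\kappa\bigl(1+\tfrac{\alpha}{\beta}\bigr)\Ltwo{r_2}$. The multiplier direction $\Dl$ is then recovered from the first block equation tested against all of $\dZ$: since the residual $r_1-\L_{zz}(\chi)[\Dz]$ is bounded and $h'(z)^*$ is bounded below by $\kappa$ (the dual form of the inf-sup condition), I obtain $\Ltwo{\Dl}\le\tfrac1\kappa(\|r_1\|+\alpha\Ztwo{\Dz})$, into which the state estimate is substituted.

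These two componentwise bounds are exactly the inverse operator evaluated on $(r_1,r_2)$, and collecting their coefficients reproduces precisely the three quantities under the maximum, namely $\tfrac4\uB=\tfrac1\beta$, $\tfrac1\kappa\bigl(1+\tfrac{4(\oB+R)}{\uB}\bigr)=\tfrac1\kappa\bigl(1+\tfrac{\alpha}{\beta}\bigr)$, and $\tfrac{\oB+R}{\kappa^2}=\tfrac{\alpha}{\kappa^2}$. To condense them into a single operator-norm bound with respect to $\Ytwo{\cdot}$, I would pass from the separate $\Ztwo{\cdot}$- and $\Ltwo{\cdot}$-estimates to the combined norm by Cauchy--Schwarz; this is where the prefactor $\sqrt2$ appears, delivering $\Ytwo{F'(\chi)^{-1}}\le\omega_1$.

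The main obstacle I anticipate is not the algebra of the Brezzi estimates but the functional-analytic care forced by the two-norm discrepancy noted after \eqref{eq:sufficient-conditions}: coercivity, boundedness, and the inf-sup property all live in the $L^2$-/$Z^2$-topology, whereas the neighbourhood membership $\chi\in\Nhood[\glob\chi]{}$ and the differentiability of $T$ are $L^\infty$ statements. I must therefore ensure that the right inverse furnished by the inf-sup construction genuinely lands in $\dZ$ with the claimed $Z^2$-bound and that the Lax--Milgram step is performed on the $L^2$-closure of the kernel, so that the three lemmas are applied in mutually compatible norms. Verifying that $F'(\chi)$ is a true isomorphism between the relevant Hilbert-space completions -- rather than merely injective with dense range -- is the delicate point that the splitting argument above must be arranged to settle.
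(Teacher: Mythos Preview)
Your proposal is correct and follows essentially the same route as the paper: it assembles the three LBB ingredients from \Cref{th:inf-sup-condition}, \Cref{th:Lzz-lower-bound-neighborhood}, and \Cref{th:Lzz-upper-bound}, invokes Brezzi's splitting theorem to obtain the separate bounds on $\Ztwo{\Dz}$ and $\Ltwo{\Dl}$ with exactly the constants $4/\uB$, $\tfrac1\kappa(1+4(\oB+R)/\uB)$, and $(\oB+R)/\kappa^2$, and then merges these via Cauchy--Schwarz to pick up the $\sqrt{2}$. If anything, you spell out the two-step splitting more explicitly than the paper, which simply cites \cite[Thm.~4.3]{Braess2013} and records the resulting componentwise estimates; your closing caveat about the two-norm discrepancy is a fair observation but not something the paper addresses either.
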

\begin{proof}
    The proof builds on some prerequisites that have been established above and are briefly summarized.
    \begin{enumerate}[label=\roman*)]
        \item In \Cref{th:inf-sup-condition} it was proved that the \emph{inf-sup} condition is satisfied:
            \[
                \underset{\lambda\in L^2(0,1)}{\inf} \; \underset{\dz\in\dZ}{\sup} \; \frac{\langle \lambda, h'(z)[\dz]\rangle}{\Ztwo\dz \Ltwo\lambda} \ge \kappa > 0.
            \]

        \item In \Cref{th:Lzz-lower-bound-neighborhood} it was proved that $\L_{zz}$ is positive definite on the kernel of the constraints, i.e.,
            \[
                \L_{zz}(\chi)[\dz]^2 = T''(\xi)[\dx]^2 + \langle \lambda, h''(z)[\dz]^2 \rangle \ge \frac\uB4 \Ztwo{\dz}^2
            \]
        for all $\dz\in\dZ$ such that $h'(z)[\dz] = 0$.
        \item In \Cref{th:Lzz-upper-bound} it was proved that $\L_{zz}$ is bounded from above as
            \[
                |\L_{zz}(\chi)[\dz]^2|
                = |T''(\xi)[\dx]^2 + \langle \lambda, h''(z)[\dz]^2 \rangle|
                \le (\oB + R) \Ztwo{\dz}^2 .
            \]
    \end{enumerate}
    Under these conditions, it follows from \emph{Brezzi's Splitting Theorem} \cite[Thm.~4.3]{Braess2013} that $F'(x)$ is isomorphic.
    Further, it can be shown that for every right hand side $F(x)$ of the saddle point problem \eqref{eq:saddle-point-problem-short} there is exactly one solution $(\Dz, \Dl)$ with
    \begin{align*}
        \Ztwo\Dz
            &\le \frac4\uB \; \Ztwo{T'(\xi) + \langle \lambda, h'(z) \rangle} \\
            &\quad+ \frac1\kappa \left(1+\frac{4(\oB+R)}{\uB}\right) \Ltwo{h(z)}, \\
        \Ltwo\Dl
            &\le \frac1\kappa \left(1+\frac{4(\oB+R)}{\uB}\right) \Ztwo{T'(\xi) + \langle \lambda, h'(z) \rangle} \\
            &\quad+ \frac{\oB+R}{\kappa^2} \left(1+\frac{4(\oB+R)}{\uB}\right) \Ltwo{h(z)}.
    \end{align*}
    With $\|F(\chi)\|_{(Y^2)^*} = \Ztwo{T'(\xi) + \langle \lambda, h'(z) \rangle}^2 + \Ltwo{h(z)}^2$ follows that
    \begin{align*}
        \Ztwo{\Dz} &\le \sqrt2 \max\left\{
            \frac4\uB, \;
            \frac1\kappa \left(1+\frac{4(\oB+R)}{\uB}\right)
        \right\} \|F(\chi)\|, \\
        \Ltwo\Dl &\le \sqrt2 \max\left\{
            \frac1\kappa \left(1+\frac{4(\oB+R)}{\uB}\right), \;
            \frac{\oB+R}{\kappa^2}
        \right\} \|F(\chi)\|,
    \end{align*}
    which directly yields
    \begin{align*}
        \Ytwo{\Dchi}^2 \underset{\eqref{eq:ytwo-norm}}{=} \Ztwo{\Dz}^2 + \Ltwo{\Dl}^2 \le \omega_1^2 \|F(\chi)\|
    \end{align*}
    with $\omega_1 = \sqrt2 \max\left\{
            \frac4\uB,
            \frac1\kappa \left(1+\frac{4(\oB+R)}{\uB}\right),
            \frac{\oB+R}{\kappa^2}
        \right\}$.
    This completes the proof, since
    \begin{align*}
        \|F'(\chi)^{-1}\|_{(Y^2)^* \to Y^2}
        = \underset{\|F(\chi)\|_{(Y^*)^2}}{\sup} \frac{\Ytwo{\Dchi}}{\|F(\chi)\|_{(Y^*)^2}}
        \le \omega_1.
    \end{align*}
\end{proof}

\subsection{Lipschitz Constant}

We are on the verge of presenting a Lipschitz constant for the free flight problem. To accomplish this, we introduce an additional bound in the form of a Lemma. This bound incorporates the constant $\hat\B$, which is derived in the appendix (\Cref{th:delta-f-bound}). It serves to define an upper limit on the second derivative of $f$ as defined in \Cref{eq:dt-dtau}. Its value is contingent upon the overall characteristics of the wind field.

\begin{lemma} \label{th:dF2-dF1-upper-bound}
    Let $\glob\chi = (\glob z, \glob\lambda)$ be a global minimizer of \eqref{eq:reduced-problem} and the corresponding Lagrange multipliers.
    For any $\chi_{i\in\{1,2\}} \in \Nhood[\glob\chi]{}$ there is a $\hat \B$ such that
    \begin{align}
        \|(F'(\chi_2) - F'(\chi_1)) [\chi_2-\chi_1]\|_{(Y^2)^*} \le \omega_2 \Ytwo{\chi_2-\chi_1}
    \end{align}
    with
    \begin{align}
        \omega_2 = (8+\hat\B) R.
    \end{align}
\end{lemma}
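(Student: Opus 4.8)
The plan is to expand $F'=\L''$ componentwise and to exploit that the constraint $h(z)=\xt^T\xt-L^2$ is \emph{quadratic}: its second derivative $h''$ is a constant symmetric bilinear form (independent of $z$), and $h'$ is affine in $z$. Writing $\Delta\chi=\chi_2-\chi_1=(\Dz,\Dl)$ with $\Dz=(\DL,\Dx)$ and testing against arbitrary $\dz=(\dL,\dx)\in\dZ$ and $\dl\in\Lambda^*$, the two components of $(F'(\chi_2)-F'(\chi_1))[\Delta\chi]$ collapse, after the exact cancellations permitted by $h''=\text{const}$, to
\begin{align*}
    A[\dz] &= \big(T''(\xi_2)-T''(\xi_1)\big)[\dx][\Dx] + 2\langle\Dl, h''[\dz][\Dz]\rangle, \\
    B[\dl] &= \langle\dl, h''[\Dz][\Dz]\rangle,
\end{align*}
where $h''[\dz][\Dz]=2\dxt^T\Dxt-2\dL\DL$. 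The only term carrying a genuine derivative of the data is the first summand of $A$; everything else is a fixed quadratic form and will produce the explicit constant $8$.

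First I would dispose of the constraint contributions. For the mixed term of $A$ I use $2\langle\Dl,h''[\dz][\Dz]\rangle=4\langle\Dl, \dxt^T\Dxt-\dL\DL\rangle$, extract one $L^\infty$-factor of the increment via $\Linf{\Dxt}\le\Zinf{\Dz}\le 2R$ and $|\DL|\le 2R$ (valid since $\chi_1,\chi_2\in\Nhood[\glob\chi]{}$), and bound the remaining integral by Cauchy--Schwarz; this gives $|A_{(b)}[\dz]|\le 8R\,\Ltwo{\Dl}\,\Ztwo{\dz}\le 8R\,\Ytwo{\Delta\chi}\,\Ztwo{\dz}$, hence $\sup_{\dz\ne0}|A_{(b)}[\dz]|/\Ztwo{\dz}\le 8R\,\Ytwo{\Delta\chi}$. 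The identical manipulation on $B$ yields $\sup_{\dl\ne0}|B[\dl]|/\Ltwo{\dl}\le 4R\,\Ytwo{\Delta\chi}$, a term I would simply absorb into $\hat\B$.

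The core of the argument is the objective term $A_{(a)}[\dz]=\big(T''(\xi_2)-T''(\xi_1)\big)[\dx][\Dx]$. I would write it by the fundamental theorem of calculus as
\begin{align*}
    A_{(a)}[\dz] = \int_0^1 T'''(\xi_1+\nu\Dx)[\dx][\Dx]^2\,d\nu
    = \int_0^1\!\!\int_0^1 f'''(\xi_1+\nu\Dx,\cdot)[\Dx,\Dxt]^2[\dx,\dxt]\,d\tau\,d\nu,
\end{align*}
using the symmetry of $f'''$ to place the increment $(\Dx,\Dxt)$ in the two ``squared'' slots and the test direction $(\dx,\dxt)$ in the single slot. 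The segment $\xi_1+\nu\Dx$ stays in $\Nhood[\glob z]{}$, so that $\|(\xi_1+\nu\Dx)_\tau\|\ge\glob L-R\ge\tilde L-R$ pointwise almost everywhere (because $\|\glob\xt\|=\glob L$ a.e.) and $\le\glob L+R$; these are exactly the bounds under which \Cref{th:dddf-upper-bound} applies. Inserting it and integrating, each of the six resulting terms carries two increment factors and one test factor; extracting one increment factor in $L^\infty$ (bounded by $2R$), the other in $L^2$, and the test factor in $L^2$, and controlling $\|\xt\|\le\glob L+R\le\frac{\ov+\oc_0}{\ov-\oc_0}\tilde L+R$ and $\|\xt\|^{-1}\le(\tilde L-R)^{-1}$ as in \Cref{th:dddT-upper-bound}, yields $\sup_{\dz\ne0}|A_{(a)}[\dz]|/\Ztwo{\dz}\le\hat\B_0 R\,\Ytwo{\Delta\chi}$ with $\hat\B_0$ assembled from the $\og_i$ (essentially the constant $\oG$). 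Collecting the bounds on $A$ and $B$ and putting everything except the explicit $8$ into $\hat\B:=\hat\B_0+4$ gives $\omega_2=(8+\hat\B)R$.

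The hard part is exactly this objective term. Whereas the constraint is quadratic and yields to elementary inequalities, $A_{(a)}$ requires the full third-derivative estimate together with the \emph{two-norm discrepancy} bookkeeping: the integrand is quadratic in $\Delta\chi$ and linear in $\dz$, and it must be split into one $L^\infty$ and two $L^2$ factors so as to reproduce the dual $Z^2$-norm of $\dz$ while leaving a single power of $R$ and the factor $\Ytwo{\Delta\chi}$. This split is what forces the use of the pointwise bound \Cref{th:dddf-upper-bound} — whose $\|\xt\|^{-1}$ and $\|\xt\|^{-2}$ terms are tamed by the pointwise lower bound above — rather than the already-integrated \Cref{th:dddT-upper-bound}, in which the $L^2$-direction appears squared and so cannot accommodate the two distinct directions $\dx$ and $\Dx$.
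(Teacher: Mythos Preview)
Your argument is correct and takes a genuinely different route for the objective term. For the constraint contributions you exploit that $h$ is quadratic, so $h''$ is constant and $h'(z_2)-h'(z_1)=h''[\,\cdot\,][\Dz]$; this collapses what the paper treats as three separate estimates (bounded by $2R$, $2R$, $4R$ respectively, summing to the explicit $8$) into your two terms. Your count is slightly less sharp here ($8R$ from $A_{(b)}$ plus $4R$ from $B$), but since the lemma only asserts existence of \emph{some} $\hat\B$, absorbing the surplus $4$ is harmless. The substantive difference is the treatment of $(T''(\xi_2)-T''(\xi_1))[\Dx][\dx]$: the paper does \emph{not} pass through $T'''$ but instead invokes a dedicated appendix result (\Cref{th:delta-f-bound}) that bounds $f''(\xi_2,\xi_{\tau,2})-f''(\xi_1,\xi_{\tau,1})$ directly, via brute-force expansion of every summand of $f_1''$ and $f_2''$, and this is where the paper's $\hat\B$ actually comes from. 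Your mean-value route via $\int_0^1 T'''(\xi_1+\nu\Dx)[\Dx]^2[\dx]\,d\nu$, together with the convexity of $\Nhood[\glob z]{}$ and the already-proven pointwise bound of \Cref{th:dddf-upper-bound} (with the roles of the squared and single slots swapped by symmetry), is more economical: it reuses existing machinery and avoids the entire appendix computation, at the cost of a less explicit~$\hat\B$.
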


\begin{proof}
    From \Cref{th:general-bounds} it directly follows that
    \begin{subequations} \label{eq:bound-inf-norm}
    \begin{align}
        |L_2-L_1|                        &\le 2R, \\
        \Linf{\xi_{\tau,2}-\xi_{\tau,1}} &\le 2R, \\
        \Linf{\lambda_2-\lambda_1}       &\le R.
    \end{align}
    \end{subequations}
    Using these bounds as well as the Cauchy-Schwarz inequality and Young's inequality, we show that for any $\dchi \in \dZ \times L^2(0,1)$ with 
    it holds that
    \begin{alignat*}{2}
        &&&\hspace{-1cm} | \langle \lambda_2, h''(z_2)[z_2-z_1, \dz]\rangle - \langle \lambda_1, h''(z_1)[z_2-z_1, \dz]\rangle | \\
        &=&& |\int_0^1 \lambda_2 (\dxt^T (\xi_{\tau,2}-\xi_{\tau,1}) - \dL (L_2-L_1)) \\
            &&& - \lambda_1 (\dxt^T (\xi_{\tau,2}-\xi_{\tau,1}) - \dL (L_2-L_1)) d\tau | \\
        &=&& |\int_0^1 (\lambda_2- \lambda_1) (\dxt^T (\xi_{\tau,2}-\xi_{\tau,1}) - \dL (L_2-L_1)) d\tau | \\
        &\le&& \int_0^1 |\lambda_2- \lambda_1| \ndxt \|\xi_{\tau,2}-\xi_{\tau,1}\| d\tau \\
            &&&+ \ndL |L_2-L_1| \int_0^1 |\lambda_2-\lambda_1| d\tau \\
        \underset{\text{(CS)}}&{\le}&& 
            \left[ \int_0^1 \ndxt^2 d\tau \right]^{1/2}
            \left[ \int_0^1 (\lambda_2- \lambda_1)^2 \|\xi_{\tau,2}-\xi_{\tau,1}\|^2 d\tau \right]^{1/2} \\
            &&& + \ndL \; |L_2-L_1| \Lone[]{\lambda_2-\lambda_1} \\
        \underset{\eqref{eq:bound-inf-norm}}&{\le}&&
            \Ltwo[]\dxt
            \left[ 2R^2 \int_0^1 |\lambda_2-\lambda_1| \|\xi_{\tau,2}-\xi_{\tau,1}\| d\tau \right]^{1/2}\\
            &&& + R \; \ndL \; |L_2-L_1| \\
        \underset{\text{(CS)}}&{\le}&&
            \sqrt2 R \Ltwo[]\dxt
            \Ltwo[]{\lambda_2-\lambda_1}^{1/2}
            \Ltwo[]{\xi_{\tau,2}-\xi_{\tau,1}}^{1/2}\\
            &&& + R \; \ndL \; |L_2-L_1| \\
        \underset{\text{(Y)}}&{\le}&&
            \frac{\sqrt2}2 R \Ltwo[]\dxt \left[
                \Ltwo[]{\lambda_2-\lambda_1}
                + \Ltwo[]{\xi_{\tau,2}-\xi_{\tau,1}}
            \right]\\
            &&& + R \; \ndL \; |L_2-L_1| \\
        \underset{\eqref{eq:ytwo-norm}}&{\le}&&
            R \Ytwo{\dchi} \left[
                \Ltwo[]{\lambda_2-\lambda_1}
                + \Ltwo[]{\xi_{\tau,2}-\xi_{\tau,1}}
                + |L_2-L_1|
            \right] \\
        &\le&&
            R \Ytwo{\dchi} \left[
                \Ltwo[]{\lambda_2-\lambda_1}^2
                + \Ltwo[]{\xi_2-\xi_1}^2
                + \Ltwo[]{\xi_{\tau,2}-\xi_{\tau,1}}^2
                + |L_2-L_1|^2
            \right]^{1/2} \\
        \underset{\eqref{eq:ytwo-norm}}&{=}&&
            2 R \Ytwo{\dchi} \Ytwo{\chi_2-\chi_1}
    \end{alignat*}
    as well as
    \begin{alignat*}{2}
        &&&\hspace{-1cm} |\langle \lambda_2-\lambda_1, (h'(z_2)-h'(z_1))[\dz]\rangle| \\
        &=&& | \int_0^1 (\lambda_2-\lambda_1) \left((\xi_{\tau,2}-\xi_{\tau,1})^T \dxt - (L_2-L_1)\dL \right) d\tau | \\
        &\le&& \int_0^1 |\lambda_2-\lambda_1| \|\xi_{\tau,2}-\xi_{\tau,1}\| \ndxt d\tau \\
            &&&+ |L_2-L_1| \ndL  \int_0^1 |\lambda_2-\lambda_1| d\tau \\
        \underset{\text{(CS)}}&{\le}&&
            \left[\int_0^1 \ndxt^2 d\tau \right]^{1/2}
            \left[\int_0^1 (\lambda_2-\lambda_1)^2 \|\xi_{\tau,2}-\xi_{\tau,1}\|^2 d\tau\right]^{1/2} \\
            &&&+ |L_2-L_1| \ndL \Lone[]{\lambda_2-\lambda_1} \\
        \underset{\eqref{eq:bound-inf-norm}}&{\le}&&
            \Ltwo[]\dxt
            \left[2R^2 \int_0^1 |\lambda_2-\lambda_1| \; \|\xi_{\tau,2}-\xi_{\tau,1}\| d\tau\right]^{1/2} \\
            &&&+ R |L_2-L_1| \ndL \\
        \underset{\eqref{eq:ytwo-norm}}&{\le}&&
            \sqrt2 R \Ytwo{\dchi} 
            \left[\int_0^1 |\lambda_2-\lambda_1| \; \|\xi_{\tau,2}-\xi_{\tau,1}\| d\tau\right]^{1/2} \\
            &&&+ R \Ytwo{\dchi} |L_2-L_1| \\
        \underset{\text{(CS)}}&{\le}&&
            \sqrt2 R \Ytwo{\dchi} \Ltwo[]{\lambda_2-\lambda_1}^{1/2} \Ltwo[]{\xi_{\tau,2}-\xi_{\tau,1}}^{1/2} \\
            &&&+ R \Ytwo{\dchi} |L_2-L_1| \\
        \underset{\text{(Y)}}&{\le}&&
            \frac{\sqrt2}2 R \Ytwo{\dchi} \left[ \Ltwo[]{\lambda_2-\lambda_1} + \Ltwo[]{\xi_{\tau,2}-\xi_{\tau,1}} \right] \\
            &&&+ R \Ytwo{\dchi} |L_2-L_1| \\
        &\le&&
            R \Ytwo{\dchi} \bigg[ \Ltwo[]{\lambda_2-\lambda_1} + \Ltwo[]{\xi_{\tau,2}-\xi_{\tau,1}}
                + \Ltwo[]{\xi_2-\xi_1} + |L_2-L_1|\bigg] \\
        &\le&&
            2 R \Ytwo{\dchi} \bigg[
                \Ltwo[]{\lambda_2-\lambda_1}^2
                + \Ltwo[]{\xi_{\tau,2}-\xi_{\tau,1}}^2
                + \Ltwo[]{\xi_2-\xi_1}^2
                + |L_2-L_1|^2
            \bigg]^{1/2} \\
        \underset{\eqref{eq:ytwo-norm}}&{=}&&
            2 R \Ytwo{\dchi} \Ytwo{\chi_2-\chi_1}
    \end{alignat*}
    and
    \begin{alignat*}{2}
        &&&\hspace{-1cm} |\langle \dl, (h'(z_2)-h'(z_1))[z_2-z_1]\rangle| \\
        &=&& | \int_0^1 \dl ((\xi_{\tau,2}-\xi_{\tau,1})^T (\xi_{\tau,2}-\xi_{\tau,1}) - (L_2-L_1)^2) d\tau | \\
        &\le&& \int_0^1 \ndl \|\xi_{\tau,2}-\xi_{\tau,1}\|^2 d\tau
        + (L_2-L_1)^2 \int_0^1 \ndl d\tau \\
        \underset{\eqref{eq:bound-inf-norm}}&{\le}&&
            2R \int_0^1 \ndl \|\xi_{\tau,2}-\xi_{\tau,1}\| d\tau
            + 2R |L_2-L_1| \Lone[]\dl \\
        \underset{\text{(CS)}}&{\le}&&
            2R \left[\int_0^1 \dl^2 d\tau\right]^{1/2} \left[\int_0^1 \|\xi_{\tau,2}-\xi_{\tau,1}\|^2 d\tau\right]^{1/2} \\
            &&&+ 2R |L_2-L_1| \; \Lone[]\dl \\
        &\le&&
            2R \Ltwo[]\dl \Ltwo[]{\xi_{\tau,2}-\xi_{\tau,1}} \\
            &&&+ 2R |L_2-L_1| \; \Lone[]\dl \\
        \underset{\eqref{eq:ytwo-norm}}&{\le}&&
            2R \Ytwo{\dchi} \left[\Ltwo[]{\xi_{\tau,2}-\xi_{\tau,1}} + |L_2-L_1| \right] \\
        &\le&&
            4 R \Ytwo{\dchi} \bigg[
                \Ltwo[]{\lambda_2-\lambda_1}^2
                + \Ltwo[]{\xi_{\tau,2}-\xi_{\tau,1}}^2 \\
                &&&\hspace{1.6cm}+ \Ltwo[]{\xi_2-\xi_1}^2
                + |L_2-L_1|^2
            \bigg]^{1/2} \\
        \underset{\eqref{eq:ytwo-norm}}&{=}&&
            4 R \Ytwo{\dchi} \Ytwo{\chi_2-\chi_1}.
    \end{alignat*}
    As shown in \Cref{th:delta-f-bound} in the appendix, there is a $\hat\B<\infty$ such that
    \begin{multline*}
        |\left(f''(\xi_2, \xi_{\tau,2}) - f''(\xi_1, \xi_{\tau,1})\right)[\xi_2-\xi_1, \xi_{\tau,2}-\xi_{\tau,1}][\dx,\dxt] | \\
        \le \hat\B R \sqrt{\|\xi_2-\xi_1\|^2 + \|\xi_{\tau,2}-\xi_{\tau,1}\|^2} \sqrt{\ndx^2 + \ndxt^2},
    \end{multline*}
    which provides the following bound, as
    \begin{alignat*}{2}
        &&&\hspace{-1cm} |\left(T''(\xi_2) - T''(\xi_1)\right)[\xi_2-\xi_1, \dx]| \\
        &=&& 
            |\int_0^1 \left(f''(\xi_2, \xi_{\tau,2}) - f''(\xi_1, \xi_{\tau,1})\right)[\xi_2-\xi_1, \xi_{\tau,2}-\xi_{\tau,1}][\dx,\dxt] d\tau | \\
        &\le&& 
            \hat\B R \int_0^1 \left[\|\xi_2-\xi_1\|^2 + \|\xi_{\tau,2}-\xi_{\tau,1}\|^2\right]^{1/2} \left[\ndx^2 + \ndxt^2\right]^{1/2} d\tau \\
        \underset{\text{(CS)}}&{\le}&&
            \hat\B R \left[\int_0^1 \|\xi_2-\xi_1\|^2 + \|\xi_{\tau,2}-\xi_{\tau,1}\|^2 d\tau \right]^{1/2}
            \left[\int_0^1 \ndx^2 + \ndxt^2 d\tau\right]^{1/2} \\
        &\le&& 
            \hat\B R \left[\Ltwo[]{\xi_2-\xi_1}^2 + \Ltwo[]{\xi_{\tau,2}-\xi_{\tau,1}}^2\right]^{1/2}
            \left[ \Ltwo[]{\dx}^2 + \Ltwo[]{\dxt}^2\right]^{1/2} \\
        \underset{\eqref{eq:ytwo-norm}}&{\le}&&
            \hat\B R \Ytwo{\chi_2-\chi_1} \Ytwo{\dchi}.
    \end{alignat*}
    Finally, we use the bounds derived above to show that for any $\dchi$ it holds that
    \begin{align*}
        |(F'(\chi_2) - F'(\chi_1)) [\chi_2-\chi_1, \dchi]|
        &= | \left(T''(\xi_2) - T''(\xi_1)\right)[\xi_2-\xi_1, \dx] \\
            &\qquad + \langle \lambda_2, h''(z_2)[z_2-z_1, \dz]\rangle  \\
            &\qquad - \langle \lambda_1, h''(z_1)[z_2-z_1, \dz]\rangle \\
            &\qquad + \langle \lambda_2-\lambda_1, (h'(z_2)-h'(z_1))[\dz]\rangle \\
            &\qquad + \langle \dl, (h'(z_2)-h'(z_1))[z_2-z_1]\rangle |
        \\
        &\le \hat\B R \Ytwo{\dchi} \Ytwo{\chi_2-\chi_1} \\
            &\quad+ 2R \Ytwo{\dchi} \Ytwo{\chi_2-\chi_1} \\
            &\quad+ 2R \Ytwo{\dchi} \Ytwo{\chi_2-\chi_1} \\
            &\quad+ 4R \Ytwo{\dchi} \Ytwo{\chi_2-\chi_1}
        \\
        &= \omega_2 \Ytwo{\dchi} \Ytwo{\chi_2-\chi_1}
    \end{align*}
    with
    \[
        \omega_2(R) = (8 + \hat\B) R.
    \]
    This directly yields the claim, as
    \begin{align}
        \|(F'(\chi_2) - F'(\chi_1))[\chi_2-\chi_1]\|_{(Y^2)^*}
        &= \underset{\Ytwo{\dchi} = 1}\sup |(F'(\chi_2) - F'(\chi_1)) [\chi_2-\chi_1, \dchi]|  \notag\\
        &\le \omega_2 \Ytwo{\chi_2-\chi_1}.
    \end{align}
\end{proof}

\subsection{Convergence of Newton's Method}

We are now ready to connect the results outlined above to prove that the Newton-KKT method applied to the free flight optimization problem \eqref{eq:reduced-problem} converges to a global minimizer as characterized in \Cref{sec:optimality-conditions} provided that there is a $u\in\R^2$ with $\|u\|=1$ such that $u^T\glob\xt \ge c$. Roughly speaking, the optimal route needs to head towards the destination, dominating any route that involves flying the opposite direction. It is intuitively clear that this holds even for relatively strong wind conditions.

\begin{theorem} \label{th:newton-convergence}
    Let $\glob\chi = (\glob z, \glob \lambda)$ be a global solution of \eqref{eq:reduced-problem} that satisfies the first and second order conditions for optimality with $\uB>0$. Moreover let there be a $c>0$ and a $u\in\R^2$ with $\|u\|=1$ such that $u^T\glob\xt \ge c$ for almost all $\tau\in(0,1)$. Finally, let $\omega:=\omega_1\omega_2$, as given in \Cref{th:Finv-upper-bound,th:dF2-dF1-upper-bound}.
    \\
    Then there is a $R_C>0$, such that the ordinary Newton iterates defined in \Cref{sec:newton} converge to $\glob\chi$ at an estimated rate
    \begin{equation} \label{eq:newton-convergence-rate}
        \Ytwo{\chi^{k+1}-\glob\chi} \le \frac\omega2 \Ytwo{\chi^k - \glob\chi},
    \end{equation}
    if initialized with $\chi^0\in\Nhood[\glob\chi][R_C]{}$ and provided that the iterates $\chi^k$ remain in $\Nhood[\glob\chi][R_C]{}$. Moreover, $\glob\chi$ is unique in $\Nhood[\glob\chi][R_C]{}$.
\end{theorem}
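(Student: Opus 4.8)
The plan is to read \Cref{th:newton-convergence} as the local convergence statement of the \emph{affine covariant} Newton--Mysovskikh theory (cf.\ \cite{NocedalWright2006}), for which the two decisive estimates are already in place. \Cref{th:Finv-upper-bound} controls the inverse of the KKT-operator, $\Ytwo{F'(\chi)^{-1}} \le \omega_1$, and \Cref{th:dF2-dF1-upper-bound} supplies the increment bound $\Ytwo{(F'(\chi_2)-F'(\chi_1))[\chi_2-\chi_1]} \le \omega_2\,\Ytwo{\chi_2-\chi_1}$ with $\omega_2=(8+\hat\B)R$, both uniformly on the convex ball $\Nhood[\glob\chi]{}$. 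Combining them gives the affine covariant Lipschitz condition $\Ytwo{F'(\chi)^{-1}(F'(\chi_2)-F'(\chi_1))[\chi_2-\chi_1]} \le \omega\,\Ytwo{\chi_2-\chi_1}$ with $\omega=\omega_1\omega_2$, which is the engine of the whole argument.

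First I would set up the error recursion. With $\chi_s := \glob\chi + s(\chi^k-\glob\chi)$, the identity $F(\glob\chi)=0$ and the fundamental theorem of calculus along this segment turn the update $\chi^{k+1}=\chi^k-F'(\chi^k)^{-1}F(\chi^k)$ into
\[
    \chi^{k+1}-\glob\chi = F'(\chi^k)^{-1}\int_0^1 \big(F'(\chi^k)-F'(\chi_s)\big)[\chi^k-\glob\chi]\,ds .
\]
Since the ball is convex, every $\chi_s$ lies in $\Nhood[\glob\chi]{}$, so the two lemmas apply along the entire path. The crucial bookkeeping is that the integrand is homogeneous in the step length: because $\chi^k-\chi_s=(1-s)(\chi^k-\glob\chi)$, re-using the bilinear content behind \Cref{th:dF2-dF1-upper-bound} — that the increment is governed by the product of the $L^\infty$-separation of the two arguments and the $Y^2$-norm of the evaluation direction — makes the integrand scale like $(1-s)$ times $\Yinf{\chi^k-\glob\chi}\,\Ytwo{\chi^k-\glob\chi}$. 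Integrating $\int_0^1(1-s)\,ds=\tfrac12$ produces the factor $\tfrac12$, and bounding the surviving $L^\infty$-factor by $R$ (which is precisely where $\omega_2=(8+\hat\B)R$ originates) yields
\[
    \Ytwo{\chi^{k+1}-\glob\chi} \le \frac{\omega_1}{2}(8+\hat\B)\,\Yinf{\chi^k-\glob\chi}\,\Ytwo{\chi^k-\glob\chi} \le \frac{\omega_1\omega_2}{2}\,\Ytwo{\chi^k-\glob\chi} = \frac\omega2\,\Ytwo{\chi^k-\glob\chi}.
\]

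It then remains to fix $R_C$. The point is that $\omega_2(R)=(8+\hat\B)R\to0$ as $R\to0$, while $\omega_1(R)$ stays bounded: its constituents $\kappa(R)$ from \Cref{th:inf-sup-condition} and $\oB(R)$ from \Cref{th:ddT-upper-bound} tend to strictly positive, finite limits and $\uB>0$ is fixed, so $\omega(R)\to0$. Hence I can pick $R_C>0$ small enough that simultaneously $\omega(R_C)<2$ — giving a contraction factor $\omega/2<1$ — and all inherited constraints hold ($R_C<c$ for \Cref{th:inf-sup-condition}, and $R_C<\min\{\uB/(2\oG),\,\uB/40,\,\tilde L/2\}$ for \Cref{th:Lzz-lower-bound-neighborhood}). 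Under the theorem's standing hypothesis that the iterates remain in $\Nhood[\glob\chi][R_C]{}$, the estimate above gives convergence at the claimed rate. Uniqueness follows from the same inequality: a second root $\chi'\in\Nhood[\glob\chi][R_C]{}$ is a fixed point of the Newton map, so inserting $\chi^k=\chi'$ yields $\Ytwo{\chi'-\glob\chi}\le\tfrac\omega2\Ytwo{\chi'-\glob\chi}$, forcing $\chi'=\glob\chi$ because $\omega/2<1$.

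The principal obstacle is the \emph{two-norm discrepancy}. The contraction is obtained in the $Y^2$-norm, whereas invertibility, the Lipschitz bound, and the neighborhood itself are controlled in the stronger $Y^\infty$-norm; a genuine $Y^\infty$-contraction is \emph{not} available, which is exactly why the statement must assume a priori that the iterates stay in the $L^\infty$-ball. Consequently the homogeneity argument must carefully separate the $L^\infty$-factor (bounded by $R$ and responsible for making $\omega$ small) from the $Y^2$-factor (which carries the decay), keeping the two roles of the step parameter $s$ apart so that the factor $\tfrac12$ emerges while the correct norm sits on each side. Once this is handled, the selection of $R_C$ is a routine smallness argument exploiting $\omega_2(R)\to0$.
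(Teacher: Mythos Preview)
Your proposal is correct and follows essentially the same route as the paper: both combine \Cref{th:Finv-upper-bound} and \Cref{th:dF2-dF1-upper-bound} into the affine covariant Lipschitz bound $\Ytwo{F'(\chi_1)^{-1}(F'(\chi_2)-F'(\chi_1))[\chi_2-\chi_1]}\le\omega\,\Ytwo{\chi_2-\chi_1}$, use the integral representation of the Newton error along the segment to $\glob\chi$ via $F(\glob\chi)=0$, extract the factor $\tfrac12$, pick $R_C$ small enough that $\omega_1\omega_2<2$ (exploiting $\omega_2=(8+\hat\B)R\to0$), and conclude uniqueness by the fixed-point contradiction. You are in fact more explicit than the paper about where the $\tfrac12$ originates---namely from the bilinear structure hidden in the proof of \Cref{th:dF2-dF1-upper-bound}, which couples the $L^\infty$-separation of the two base points to the $Y^2$-norm of the evaluation direction---and about why the two-norm discrepancy forces the standing assumption that the iterates remain in the $L^\infty$-ball; the paper simply writes $\tfrac{\mu}{2}\omega\,\Ytwo{e_k}$ without unpacking this.
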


\begin{proof}

    In \Cref{th:inf-sup-condition,th:Lzz-lower-bound-neighborhood,th:Lzz-upper-bound} we showed that the \emph{inf-sup} condition is satisfied, that, $\L_{zz}(\chi)$ is positive definite on the kernel of the constraint for all $x\in\Nhood[\glob\chi][R_C]{}$, and that it is bounded from above. Consequently, $F'(\chi)$ is invertible with
    \begin{align*}
        \|F'(\chi)^{-1}\|_{(Y^2)^* \to Y^2} \le \omega_1 \qquad \forall~\chi\in\Nhood[\glob\chi][R_C]{},
    \end{align*}
    as confirmed in \Cref{th:Finv-upper-bound}.
    Further, it follows from \Cref{th:Finv-upper-bound,th:dF2-dF1-upper-bound} that
    \renewcommand{\myindent}{\hspace{3cm}}
    \begin{align*}
        &\Ytwo{F'(\chi_1)^{-1} (F'(\chi_2) - F'(\chi)) [\chi_2-\chi_1]} \\
        &\myindent\le \|{F'(\chi_1)^{-1}}\|_{(Y^2)^* \to Y^2} \|(F'(\chi_2) - F'(\chi_1)) [\chi_2-\chi_1]\|_{(Y^2)^*} \\
        &\myindent\le \omega_1 \omega_2 \Ytwo{\chi_2-\chi_1} \\
        &\myindent\le \omega \Ytwo{\chi_2-\chi_1}
    \end{align*}
    for $\chi_1,\chi_2 \in \Nhood[\glob\chi][R_C]{}$.
    It is clear that since $\omega_1$ is bounded and $\omega_2 = (8 + \hat\B) R$, there is a $R_C>0$ such that $\omega := \omega_1\omega_2 < 2$.
    We now define $e_k := \chi^k - \glob\chi$ and proceed for $\mu \in (0,1)$ as follows:
    \renewcommand{\myindent}{\hspace{1cm}}
    \begin{align*}
        &\Ytwo{\chi^k + \mu \Dchi^k - \glob\chi} \\
        &\myindent= \Ytwo{ e_k - \mu F'(\chi^k)^{-1} F(\chi^k) } \\
        &\myindent= \Ytwo{ e_k - \mu F'(\chi^k)^{-1} (F(\chi^k)-\underbrace{F(\glob\chi)}_{=0}) } \\
        &\myindent= \Ytwo{ (1-\mu) e_k - \mu F'(\chi^k)^{-1} \int_{s=0}^1 \left(F'(\chi^k - s e_k) - F'(\chi^k)\right) e_k \, ds \; } \\
        &\myindent\le (1-\mu) \, \Ytwo{e_k} + \frac{\mu}{2} \omega \, \Ytwo{e_k},
    \end{align*}
    which yields the claim with $\mu=1$ as
    \begin{align*}
        \Ytwo{e_{k+1}} \le \frac\omega2 \Ytwo{e_k}.
    \end{align*}
    In order to prove uniqueness in $\Nhood[\glob\chi][R_C]{}$, assume there is a second solution $\loc\chi\neq\glob\chi$ with $F(\loc\chi)=0$ and $\loc\chi\in\Nhood[\glob\chi][R_C]{}$. Initialized with $\chi^0 := \loc\chi$ it certainly holds that $\chi^1 = \loc\chi$. However, from \eqref{eq:newton-convergence-rate} we obtain
    \begin{equation*}
        \Ytwo{\chi^1-\glob\chi} \le \frac\omega2 \Ytwo{\chi^0 - \glob\chi} < \Ytwo{\chi^0 - \glob\chi},
    \end{equation*}
    due to $\omega<2$, which yields a contradiction.
\end{proof}

\section{Conclusion}

It has been demonstrated that the Newton-KKT method can be used to solve the free flight trajectory optimization problem under certain conditions. These conditions are
i) the requirement for the iterates to remain within a $L^\infty$-neighborhood of the solution, and ii) a starting point that is sufficiently close to the solution. Such a suitable starting point can be found efficiently by calculating shortest paths on a specific graph~\cite{BorndoerferDaneckerWeiser2022a}. Hence an important tool for efficient deterministic global optimization of the free flight problem has been established.

\section*{Declarations}

\subsection*{Funding}
    This research was funded by the DFG Research Center of Excellence MATH$^+$ -- Berlin Mathematics Research Center, Project TrU-4.

\subsection*{Competing interests}
    The authors have no relevant financial or non-financial interests to disclose.

\subsection*{Ethics approval}
    Not applicable.

\subsection*{Consent to participate}
    Not applicable.

\subsection*{Data Availability}
    Data sharing not applicable to this article as no datasets were generated or analysed during the current study.

\subsection*{Consent for publication}
    We confirm that all authors agree with the submission of this manuscript to \emph{Public Transport Optimization: From Theory to Practice}.

\subsection*{Authors' contributions}
    Conceptualization, R.B and M.W.;
    methodology, F.D. and M.W.;
    validation, F.D.;
    formal analysis, F.D. and M.W.;
    investigation, F.D. and M.W.;
    resources, R.B., F.D. and M.W.;
    writing-original draft preparation, F.D. and M.W.;
    writing-review and editing, R.B.;
    supervision, R.B.;
    project administration, R.B. and M.W.;
    funding acquisition, R.B. and M.W.;
    All authors have read and agreed to the published version of the manuscript.


\bibliography{main}


\begin{thebibliography}{23}
\ifx \bisbn   \undefined \def \bisbn  #1{ISBN #1}\fi
\ifx \binits  \undefined \def \binits#1{#1}\fi
\ifx \bauthor  \undefined \def \bauthor#1{#1}\fi
\ifx \batitle  \undefined \def \batitle#1{#1}\fi
\ifx \bjtitle  \undefined \def \bjtitle#1{#1}\fi
\ifx \bvolume  \undefined \def \bvolume#1{\textbf{#1}}\fi
\ifx \byear  \undefined \def \byear#1{#1}\fi
\ifx \bissue  \undefined \def \bissue#1{#1}\fi
\ifx \bfpage  \undefined \def \bfpage#1{#1}\fi
\ifx \blpage  \undefined \def \blpage #1{#1}\fi
\ifx \burl  \undefined \def \burl#1{\textsf{#1}}\fi
\ifx \doiurl  \undefined \def \doiurl#1{\url{https://doi.org/#1}}\fi
\ifx \betal  \undefined \def \betal{\textit{et al.}}\fi
\ifx \binstitute  \undefined \def \binstitute#1{#1}\fi
\ifx \binstitutionaled  \undefined \def \binstitutionaled#1{#1}\fi
\ifx \bctitle  \undefined \def \bctitle#1{#1}\fi
\ifx \beditor  \undefined \def \beditor#1{#1}\fi
\ifx \bpublisher  \undefined \def \bpublisher#1{#1}\fi
\ifx \bbtitle  \undefined \def \bbtitle#1{#1}\fi
\ifx \bedition  \undefined \def \bedition#1{#1}\fi
\ifx \bseriesno  \undefined \def \bseriesno#1{#1}\fi
\ifx \blocation  \undefined \def \blocation#1{#1}\fi
\ifx \bsertitle  \undefined \def \bsertitle#1{#1}\fi
\ifx \bsnm \undefined \def \bsnm#1{#1}\fi
\ifx \bsuffix \undefined \def \bsuffix#1{#1}\fi
\ifx \bparticle \undefined \def \bparticle#1{#1}\fi
\ifx \barticle \undefined \def \barticle#1{#1}\fi
\bibcommenthead
\ifx \bconfdate \undefined \def \bconfdate #1{#1}\fi
\ifx \botherref \undefined \def \botherref #1{#1}\fi
\ifx \url \undefined \def \url#1{\textsf{#1}}\fi
\ifx \bchapter \undefined \def \bchapter#1{#1}\fi
\ifx \bbook \undefined \def \bbook#1{#1}\fi
\ifx \bcomment \undefined \def \bcomment#1{#1}\fi
\ifx \oauthor \undefined \def \oauthor#1{#1}\fi
\ifx \citeauthoryear \undefined \def \citeauthoryear#1{#1}\fi
\ifx \endbibitem  \undefined \def \endbibitem {}\fi
\ifx \bconflocation  \undefined \def \bconflocation#1{#1}\fi
\ifx \arxivurl  \undefined \def \arxivurl#1{\textsf{#1}}\fi
\csname PreBibitemsHook\endcsname

\bibitem{WellsEtAl2021}
\begin{barticle}
\bauthor{\bsnm{Wells}, \binits{C.A.}},
\bauthor{\bsnm{Williams}, \binits{P.D.}},
\bauthor{\bsnm{Nichols}, \binits{N.K.}},
\bauthor{\bsnm{Kalise}, \binits{D.}},
\bauthor{\bsnm{Poll}, \binits{I.}}:
\batitle{{Reducing Transatlantic Flight Emissions by Fuel-Optimised Routing}}.
\bjtitle{Environmental Research Letters}
\bvolume{16}(\bissue{2}),
\bfpage{025002}
(\byear{2021}).
\doiurl{10.1088/1748-9326/abce82}
\end{barticle}
\endbibitem

\bibitem{KarischEtAl2012}
\begin{bbook}
\bauthor{\bsnm{Karisch}, \binits{S.E.}},
\bauthor{\bsnm{Altus}, \binits{S.S.}},
\bauthor{\bsnm{Stojkovi{\'{c}}}, \binits{G.}},
\bauthor{\bsnm{Stojkovi{\'{c}}}, \binits{M.}}:
In: \beditor{\bsnm{Barnhart}, \binits{C.}},
\beditor{\bsnm{Smith}, \binits{B.}} (eds.)
\bbtitle{Operations},
pp. \bfpage{283}--\blpage{383}.
\bpublisher{Springer},
\blocation{Boston, MA}
(\byear{2012}).
\doiurl{10.1007/978-1-4614-1608-1_6}
\end{bbook}
\endbibitem

\bibitem{AlizadehEtAl2018}
\begin{barticle}
\bauthor{\bsnm{Alizadeh}, \binits{A.}},
\bauthor{\bsnm{Uzun}, \binits{M.}},
\bauthor{\bsnm{Koyuncu}, \binits{E.}},
\bauthor{\bsnm{Inalhan}, \binits{G.}}:
\batitle{{Optimal En-Route Trajectory Planning based on Wind Information}}.
\bjtitle{IFAC-PapersOnLine}
\bvolume{51}(\bissue{9}),
\bfpage{180}--\blpage{185}
(\byear{2018}).
\doiurl{10.1016/j.ifacol.2018.07.030}.
\bcomment{15th IFAC Symposium on Control in Transportation Systems CTS 2018}
\end{barticle}
\endbibitem

\bibitem{RumlerEtAl2010}
\begin{bchapter}
\bauthor{\bsnm{Rumler}, \binits{W.}},
\bauthor{\bsnm{G{\"u}nther}, \binits{T.}},
\bauthor{\bsnm{Wei{\ss}haar}, \binits{U.}},
\bauthor{\bsnm{Fricke}, \binits{H.}}:
\bctitle{{Flight Profile Variations due to the Spreading Practice of Cost Index
  Based Flight Planning}}.
In: \bbtitle{4th International Conference on Research in Air Transportation,
  Budapest}
(\byear{2010})
\end{bchapter}
\endbibitem

\bibitem{NgEtAl14}
\begin{barticle}
\bauthor{\bsnm{Ng}, \binits{H.K.}},
\bauthor{\bsnm{Sridhar}, \binits{B.}},
\bauthor{\bsnm{Grabbe}, \binits{S.}}:
\batitle{{Optimizing Aircraft Trajectories with Multiple Cruise Altitudes in
  the Presence of Winds}}.
\bjtitle{Journal of Aerospace Information Systems}
\bvolume{11(1)},
\bfpage{35}--\blpage{47}
(\byear{2014}).
\doiurl{10.2514/1.I010084}
\end{barticle}
\endbibitem

\bibitem{Zermelo1931}
\begin{barticle}
\bauthor{\bsnm{Zermelo}, \binits{E.}}:
\batitle{{\"{U}ber das Navigationsproblem bei ruhender oder
  ver\-\"{a}n\-der\-li\-cher Windverteilung}}.
\bjtitle{ZAMM}
\bvolume{11}(\bissue{2}),
\bfpage{114}--\blpage{124}
(\byear{1931}).
\doiurl{10.1002/zamm.19310110205}
\end{barticle}
\endbibitem

\bibitem{Betts11}
\begin{bbook}
\bauthor{\bsnm{Betts}, \binits{J.T.}}:
\bbtitle{Practical Methods for Optimal Control and Estimation Using Nonlinear
  Programming (Second Edition)}.
\bpublisher{Siam (Society for Industrial and Applied Mathematics)},
\blocation{Philadelphia}
(\byear{2011}).
\doiurl{10.1137/1.9780898718577}
\end{bbook}
\endbibitem

\bibitem{DrevesEtAl2017}
\begin{botherref}
\oauthor{\bsnm{Dreves}, \binits{A.}},
\oauthor{\bsnm{Gerdts}, \binits{M.}},
\oauthor{\bsnm{Sama}, \binits{M.}},
\oauthor{\bsnm{D'Ariano}, \binits{A.}}:
{Free Flight Trajectory Optimization and Generalized Nash Equilibria in
  Conflicting Situations}.
preprint
(2017)
\end{botherref}
\endbibitem

\bibitem{GeigerEtAl06}
\begin{bbook}
\bauthor{\bsnm{Geiger}, \binits{B.}},
\bauthor{\bsnm{Horn}, \binits{J.}},
\bauthor{\bsnm{DeLullo}, \binits{A.}},
\bauthor{\bsnm{Niessner}, \binits{A.}},
\bauthor{\bsnm{Long}, \binits{L.}}:
\bbtitle{{Optimal Path Planning of UAVs Using Direct Collocation with Nonlinear
  Programming}},
(\byear{2006}).
\doiurl{10.2514/6.2006-6199}
\end{bbook}
\endbibitem

\bibitem{GirardetElAl2013}
\begin{bchapter}
\bauthor{\bsnm{Girardet}, \binits{B.}},
\bauthor{\bsnm{Lapasset}, \binits{L.}},
\bauthor{\bsnm{Delahaye}, \binits{D.}},
\bauthor{\bsnm{Rabut}, \binits{C.}},
\bauthor{\bsnm{Brenier}, \binits{Y.}}:
\bctitle{{Generating Optimal Aircraft Trajectories with Respect to Weather
  Conditions}}.
In: \bbtitle{ISIATM 2013, 2nd International Conference on Interdisciplinary
  Science for Innovative Air Traffic Management},
\bconflocation{Toulouse, France}
(\byear{2013}).
\burl{https://hal-enac.archives-ouvertes.fr/hal-00867818}
\end{bchapter}
\endbibitem

\bibitem{GirardetEtAl2014}
\begin{bchapter}
\bauthor{\bsnm{Girardet}, \binits{B.}},
\bauthor{\bsnm{Lapasset}, \binits{L.}},
\bauthor{\bsnm{Delahaye}, \binits{D.}},
\bauthor{\bsnm{Rabut}, \binits{C.}}:
\bctitle{Wind-optimal path planning: Application to aircraft trajectories}.
In: \bbtitle{2014 13th International Conference on Control Automation Robotics
  Vision (ICARCV)},
pp. \bfpage{1403}--\blpage{1408}
(\byear{2014})
\end{bchapter}
\endbibitem

\bibitem{BorndoerferDaneckerWeiser2021}
\begin{barticle}
\bauthor{\bsnm{Bornd\"{o}rfer}, \binits{R.}},
\bauthor{\bsnm{Danecker}, \binits{F.}},
\bauthor{\bsnm{Weiser}, \binits{M.}}:
\batitle{{A Discrete-Continuous Algorithm for Free Flight Planning}}.
\bjtitle{Algorithms}
\bvolume{14}(\bissue{1}),
\bfpage{4}
(\byear{2021}).
\doiurl{10.3390/a14010004}
\end{barticle}
\endbibitem

\bibitem{BorndoerferDaneckerWeiser2022a}
\begin{bchapter}
\bauthor{\bsnm{Bornd\"{o}rfer}, \binits{R.}},
\bauthor{\bsnm{Danecker}, \binits{F.}},
\bauthor{\bsnm{Weiser}, \binits{M.}}:
\bctitle{{A Discrete-Continuous Algorithm for Globally Optimal Free Flight
  Trajectory Optimization}}.
In: \beditor{\bsnm{D'Emidio}, \binits{M.}},
\beditor{\bsnm{Lindner}, \binits{N.}} (eds.)
\bbtitle{22nd Symposium on Algorithmic Approaches for Transportation Modelling,
  Optimization, and Systems (ATMOS 2022)}.
\bsertitle{Open Access Series in Informatics (OASIcs)},
vol. \bseriesno{106},
pp. \bfpage{2}--\blpage{1213}.
\bpublisher{Schloss Dagstuhl -- Leibniz-Zentrum f{\"u}r Informatik},
\blocation{Dagstuhl, Germany}
(\byear{2022}).
\doiurl{10.4230/OASIcs.ATMOS.2022.2}.
\burl{https://drops.dagstuhl.de/opus/volltexte/2022/17106}
\end{bchapter}
\endbibitem

\bibitem{BorndoerferDaneckerWeiser2022b}
\begin{botherref}
\oauthor{\bsnm{Borndörfer}, \binits{R.}},
\oauthor{\bsnm{Danecker}, \binits{F.}},
\oauthor{\bsnm{Weiser}, \binits{M.}}:
Error Bounds for Discrete-Continuous Shortest Path Problems with Application to
  Free Flight Trajectory Optimization.
arXiv
(2022).
\doiurl{10.48550/ARXIV.2204.05853}.
\url{https://arxiv.org/abs/2204.05853}
\end{botherref}
\endbibitem

\bibitem{Locatelli2002}
\begin{bchapter}
\bauthor{\bsnm{Locatelli}, \binits{M.}}:
\bctitle{{Simulated Annealing Algorithms for Continuous Global Optimization}}.
In: \bbtitle{Handbook of Global Optimization},
pp. \bfpage{179}--\blpage{229}.
\bpublisher{Springer},
\blocation{Boston, MA}
(\byear{2002}).
\doiurl{10.1007/978-1-4757-5362-2_6}
\end{bchapter}
\endbibitem

\bibitem{CassioliEtAl2013}
\begin{bchapter}
\bauthor{\bsnm{Cassioli}, \binits{A.}},
\bauthor{\bsnm{Izzo}, \binits{D.}},
\bauthor{\bsnm{Di~Lorenzo}, \binits{D.}},
\bauthor{\bsnm{Locatelli}, \binits{M.}},
\bauthor{\bsnm{Schoen}, \binits{F.}}:
\bctitle{5}.
In: \beditor{\bsnm{Fasano}, \binits{G.}},
\beditor{\bsnm{Pint{\'e}r}, \binits{J.D.}} (eds.)
\bbtitle{{Global Optimization Approaches for Optimal Trajectory Planning}},
pp. \bfpage{111}--\blpage{140}.
\bpublisher{Springer},
\blocation{New York, NY}
(\byear{2013}).
\doiurl{10.1007/978-1-4614-4469-5_5}.
\burl{https://doi.org/10.1007/978-1-4614-4469-5_5}
\end{bchapter}
\endbibitem

\bibitem{AddisEtAl2011}
\begin{barticle}
\bauthor{\bsnm{Addis}, \binits{B.}},
\bauthor{\bsnm{Cassioli}, \binits{A.}},
\bauthor{\bsnm{Locatelli}, \binits{M.}},
\bauthor{\bsnm{Schoen}, \binits{F.}}:
\batitle{{A Global Optimization Method for the Design of Space Trajectories}}.
\bjtitle{Computational Optimization and Applications}
\bvolume{48},
\bfpage{635}--\blpage{652}
(\byear{2011}).
\doiurl{10.1007/s10589-009-9261-6}
\end{barticle}
\endbibitem

\bibitem{BonyadiMichalewicz2017}
\begin{barticle}
\bauthor{\bsnm{Bonyadi}, \binits{M.R.}},
\bauthor{\bsnm{Michalewicz}, \binits{Z.}}:
\batitle{{Particle Swarm Optimization for Single Objective Continuous Space
  Problems: A Review}}.
\bjtitle{Evolutionary Computation}
\bvolume{25}(\bissue{1}),
\bfpage{1}--\blpage{54}
(\byear{2017}).
\doiurl{10.1162/EVCO\_r\_00180}
\end{barticle}
\endbibitem

\bibitem{Yen1971}
\begin{barticle}
\bauthor{\bsnm{Yen}, \binits{J.Y.}}:
\batitle{{Finding the k Shortest Loopless Paths in a Network}}.
\bjtitle{Management Science}
\bvolume{17}(\bissue{11}),
\bfpage{712}--\blpage{716}
(\byear{1971})
{\href{https://arxiv.org/abs/https://doi.org/10.1287/mnsc.17.11.712}{{https://doi.org/10.1287/mnsc.17.11.712}}}.
\doiurl{10.1287/mnsc.17.11.712}
\end{barticle}
\endbibitem

\bibitem{NocedalWright2006}
\begin{bbook}
\bauthor{\bsnm{Nocedal}, \binits{J.}},
\bauthor{\bsnm{Wright}, \binits{S.J.}}:
\bbtitle{Numerical Optimization}.
\bpublisher{Springer},
\blocation{New York, NY}
(\byear{2006}).
\doiurl{10.1007/978-0-387-40065-5}
\end{bbook}
\endbibitem

\bibitem{MaurerZowe1979}
\begin{barticle}
\bauthor{\bsnm{Maurer}, \binits{H.}},
\bauthor{\bsnm{Zowe}, \binits{J.}}:
\batitle{{First and Second-Order Necessary and Sufficient Optimality Conditions
  for Infinite-Dimensional Programming Problems}}.
\bjtitle{Math. Programming}
\bvolume{16},
\bfpage{98}--\blpage{110}
(\byear{1979}).
\doiurl{10.1007/BF01582096}
\end{barticle}
\endbibitem

\bibitem{Braess2013}
\begin{bbook}
\bauthor{\bsnm{Braess}, \binits{D.}}:
\bbtitle{Finite Elemente},
\bedition{5}th edn.
\bpublisher{Springer},
\blocation{Heidelberg}
(\byear{2013}).
\doiurl{10.1007/978-3-642-34797-9}
\end{bbook}
\endbibitem

\bibitem{CasasTroeltzsch2015}
\begin{barticle}
\bauthor{\bsnm{Casas}, \binits{E.}},
\bauthor{\bsnm{Tr{\"o}ltzsch}, \binits{F.}}:
\batitle{{Second Order Optimality Conditions and Their Role in PDE Control}}.
\bjtitle{Jahresbericht der Deutschen Mathematiker-Vereinigung}
\bvolume{117}(\bissue{1}),
\bfpage{3}--\blpage{44}
(\byear{2015}).
\doiurl{10.1365/s13291-014-0109-3}
\end{barticle}
\endbibitem

\end{thebibliography}

\pagebreak
\appendix
\section{Supplementary Material}

\subsection{Global Bounds}

The derivative $f=t_\tau$ of parametrized time as defined in \eqref{eq:dt-dtau} consists of two terms, the tailwind term
\begin{align}
    f_1 &= -\frac{\xt^T w}{g},  \label{eq:f1} \\
    g &= \ov^2 - w^T w, \label{eq:g}
\end{align}
and the length term
\begin{align} \label{eq:f2}
    f_2 =  g^{-1}\left((\xt^T w)^2 + g (\xt^T\xt)\right)^{1/2}.
\end{align}
%
At each time $\tau$, we obtain
\begin{align}
    \uv^2:=\ov^2 - \oc_0^2 \le g \le \ov^2.
\end{align}
The directional derivatives of $g$ in direction $\dx$ and $\Dx \in \dX$ read
\begin{align}
    g' \dx
    &= -2w^T w_x \dx \\
    \Rightarrow\quad \|g'\| &\le 2\oc_0 \oc_1
    \\[2ex]
    \dx^T g'' \dx
    &= - 2\dx^T w_x^T w_x \dx
       - 2w_{xx}[w,\dx,\dx] \\
    \Rightarrow\quad \|g''\| &\le 2(\oc_1^2+\oc_0\oc_2).
    \\[2ex]
    g'''[\dx,\dx,\Dx]
    &= - 6 w_{xx}[ w_x\dx,\dx,\Dx] -2 w_{xxx}[w,\dx,\dx,\Dx]) \\
    \Rightarrow\quad \|g'''\| &\le 2(3\oc_1\oc_2 + \oc_0\oc_3).
\end{align}
%
%
%
%
For the tailwind term, we consider
\begin{align} \label{eq:df1}
    f_1'(\xi,\xt)[\dx,\dxt]
    = g^{-2} (\xt^Tw) (g'\dx) - g^{-1} \xt^Tw_x\dx - g^{-1} w^T\dxt,
\end{align}
which is bounded by
\begin{align} \label{eq:df1-upper-bound}
    |f_1'(\xi,\xt)[\dx,\dxt]|
    &\le
    \left(2\frac{\oc_0^2 \oc_1}{\uv^4} + \frac{\oc_1}{\uv^2} \right) \nxt \ndx
    + \frac{\oc_0}{\uv^2} \ndxt.
\end{align}
%
The second directional derivatives is
\begin{alignat}{2} \label{eq:ddf1-mixed}
    f_1''(\xi,\xt)[\dx,\dxt][\Dx,\Dxt]
    &= 		 - 2g^{-3} (g'\Dx) (\xt^T w) (g'\dx)    \quad&
    &		 +  g^{-2} (\Dxt^T w) (g'\dx)           \notag\\
    &\quad	 +  g^{-2} (\xt^T w_x \Dx) (g'\dx)      \quad&
    &	     +  g^{-2} (\xt^T w) (\dx^Tg''\Dx)      \notag\\
    &\quad   +  g^{-2} (g'\Dx) (\xt^T w_x \dx)      \quad&
    &        -  g^{-1} (\Dxt w_x \dx)               \notag\\
    &\quad   -  g^{-1} w_{xx}[\xt,\dx,\Dx]          \quad&
    &        +  g^{-2} (g'\Dx) (w^T \dxt)           \notag\\
    &\quad   -  g^{-1} (\dxt^T w_x \Dx)
\end{alignat}
and in particular
\begin{alignat}{2} \label{eq:ddf1}
    f_1''(\xi,\xt)[\dx,\dxt]^2
    &= 		- 2g^{-3} (g'\dx)^2 (\xt^T w)         \quad&
    &		+ 2g^{-2} (g'\dx) (\xt^T w_x\dx)      \notag\\
    &\quad	+  g^{-2} (\dx^Tg''\dx) (\xt^T w)     \quad&
    &		-  g^{-1} w_{xx}[\xt,\dx,\dx]         \notag\\
    &\quad	- 2g^{-1} (\dxt^T w_x\dx)             \quad&
    &		+ 2g^{-2} (\dxt^T w) (g'\dx),
\end{alignat}
which yields
\begin{align} \label{eq:ddf1-upper-bound-mixed}
    |f_1''(\xi,\xt)[\dx,\dxt][\Dx,\Dxt]|
    &\le
    \bigg[
        8 \frac{\oc_0^3 \oc_1^2}{\uv^{6}}
        + 6 \frac{\oc_0 \oc_1^2}{\uv^{4}}
        + 2 \frac{\oc_0^2 \oc_2}{\uv^{4}}
        +   \frac{\oc_2}{\uv^{2}}
    \bigg] \nxt \ndx \nDx \notag\\
    &\quad+ \bigg[
        2\frac{\oc_0^2 \oc_1}{\uv^4}
        + \frac{\oc_1}{\uv^2}
    \bigg] \ndxt \nDx    \notag\\
    &\quad+ \bigg[
    	2 \frac{\oc_0^2 \oc_1}{\uv^4}
        + \frac{\oc_1}{\uv^2}
    \bigg] \ndx \nDxt
\end{align}
and
\begin{align} \label{eq:ddf1-upper-bound}
    |f_1''(\xi,\xt)[\dx,\dxt]^2|
    &\le
    \bigg[
        8 \frac{\oc_0^3 \oc_1^2}{\uv^{6}}
        + 6 \frac{\oc_0 \oc_1^2}{\uv^{4}}
        + 2 \frac{\oc_0^2 \oc_2}{\uv^{4}}
        +   \frac{\oc_2}{\uv^{2}}
    \bigg] \nxt \ndx^2 \notag\\
    &\quad+ \bigg[
    	4 \frac{\oc_0^2 \oc_1}{\uv^4}
        + 2\frac{\oc_1}{\uv^2}
    \bigg] \ndx \ndxt ,
\end{align}
respectively.
%
Finally, the third directional derivative is
\begin{alignat}{2} \label{eq:dddf1}
    &\hspace{-1cm}f_1'''(\xi,\xt)[\dx,\dxt]^2[\Dx,\Dxt] \notag\\
    =
    %
    &\quad	  6g^{-4} (g'\Dx) (g'\dx)^2 (\xt^T w)         \quad&
    &		- 4g^{-3} (g'\dx) (\Dx^Tg''\dx) (\xt^T w)     \notag\\
    &     	- 2g^{-3} (g'\Dx) (\dx^Tg''\dx) (\xt^T w)     \quad&
    &		+ g^{-2} g'''[\dx,\dx,\Dx] (\xt^T w)          \notag\\
    &     	- 2g^{-3} (g'\dx)^2 (\xt^T w_x \Dx)           \quad&
    &		+ g^{-2} (\dx^Tg''\dx) (\xt^T w_x \Dx)        \notag\\
    &     	- 4g^{-3} (g'\Dx) (g'\dx) (\xt^T w_x \dx)     \quad&
    &		+ 2g^{-2} (\Dx^T g''\dx) (\xt^T w_x \dx)      \notag\\
    &     	+ 2g^{-2} (g'\dx) w_{xx}[\xt,\dx,\Dx]         \quad&
    &		+ g^{-2} (g'\Dx) w_{xx}[\xt,\dx,\dx]          \notag\\
    &     	- g^{-1} w_{xxx}[\xt,\dx,\dx,\Dx]             \notag\\
    %
    &     	- 2g^{-3} (g'\dx)^2 (\Dxt^T w)                \quad&
    &		+ g^{-2} (\dx^Tg''\dx) (\Dxt^T w)             \notag\\
    &     	+ 2g^{-2} (g'\dx) (\Dxt^T w_x \dx)            \quad&
    &		- g^{-1} w_{xx}[\Dxt,\dx,\dx]                 \notag\\
    %
    &     	- 4g^{-3} (g'\Dx) (g'\dx) (\dxt^T w)          \quad&
    &		+ 2g^{-2} (\Dx^T g''\dx) (\dxt^T w)           \notag\\
    &     	+ 2g^{-2} (g'\dx) (\dxt^T w_x \Dx)            \quad&
    &		+ 2g^{-2} (g'\Dx) (\dxt^T w_x \dx)            \notag\\
    &     	- 2g^{-1} w_{xx} [\dxt,\dx,\Dx],
\end{alignat}
which is bounded by
%
\renewcommand{\myindent}{\hspace{1.5cm}}
\begin{align} \label{eq:dddf1-upper-bound}
    &\hspace{-1cm} |f_1'''(\xi,\xt)[\dx,\dxt]^2[\Dx,\Dxt]| \notag\\
    &\le
    \frac{\nxt}{\uv} \bigg[
        \frac{\oc_1^3}{\uv^3} \left(
             48\frac{\oc_0^4}{\uv^4}
            +48\frac{\oc_0^2}{\uv^2}
            + 6
        \right)
        \notag \\ &\myindent
        + \frac{\oc_1 \oc_2}{\uv^2} \left(
             24\frac{\oc_0^3}{\uv^3}
            +18\frac{\oc_0}{\uv}
        \right)
        \notag \\ &\myindent
        + \frac{\oc_3}{\uv} \left(
            2\frac{\oc_0^2}{\uv^2}
            + 1
        \right)
    \bigg] \ndx^2 \nDx
    \notag \\
    &\quad + \left[
        \frac{\oc_1^2}{\uv^3} \left(8\frac{\oc_0^3}{\uv^3} + 6\frac{\oc_0}{\uv}\right)
        + \frac{\oc_2}{\uv^2} \left(2\frac{\oc_0^2}{\uv^2} + 1 \right)
    \right] \ndx^2 \nDxt
    \notag \\
    &\quad + \left[
        \frac{\oc_1^2}{\uv^3} \left(
             16 \frac{\oc_0^3}{\uv^3}
            +12 \frac{\oc_0}{\uv}
        \right)
        + \frac{\oc_2}{\uv^2} \left(
              4 \frac{\oc_0^2}{\uv^2}
            + 2
        \right)
    \right] \ndx \ndxt \nDx.
\end{align}


Before we turn to the length term $f_2$, we first consider the term
\begin{align} \label{eq:F}
    F := (\xt^T w)^2 + g (\xt^T\xt)
\end{align}
with
\[
    \uv^2 \nxt^2 \le F \le \nxt^2 \oov^2.
\]
We also note that
\[
    \frac{g}{F} \le \frac1{\nxt^2}.
\]
%
%
Then
\begin{align} \label{eq:dF}
    F'(\xi,\xt)[\dx,\dxt] =
        &2 (\xt^T w) ((\dxt^T w) + (\xt^T w_x \dx)) \notag\\
        &+ (g'\dx) (\xt^T\xt) + 2g(\xt^T \dxt),
\end{align}
which is bounded by
\begin{align} \label{eq:dF-upper-bound}
    |F'(\xi,\xt)[\dx,\dxt]| \le
    2 \oov^2 \nxt \ndxt
    + 4 \oc_0 \oc_1 \nxt^2 \ndx,
\end{align}
%
The second derivative is
\begin{alignat}{2} \label{eq:ddF-mixed}
    &\hspace{-1.5cm}F''(\xi,\xt)[\dx,\dxt][\Dx,\Dxt] \notag\\
    &=		  2 (\xt^T w) (\dxt^T w_x \Dx)      &\quad
    &		+ 2 (\xt^T w_x\Dx) (\dxt^T w)       \notag\\
    &\quad	+ 2 (\Dxt^T w) (\dxt^T w)           &\quad
    &		+ 2 (\xt^T w_x \Dx) (\xt^T w_x \dx) \notag\\
    &\quad	+ 2 (\xt^T w) w_{xx}[\xt,\dx,\Dx]   &\quad
    &		+ 2 (\Dxt^T w) (\xt^T w_x \dx)      \notag\\
    &\quad	+ 2 (\xt^T w) (\Dxt^T w_x \dx)      &\quad
    &	  	+ (\Dx^T g''\dx) (\xt^T\xt)         \notag\\
    &\quad	+ 2(g'\dx) (\Dxt^T\xt)              &\quad
    &		+ 2(g'\Dx) (\xt^T \dxt)             \notag\\
    &\quad	+ 2g(\Dxt^T \dxt)
\end{alignat}
and in particular
\begin{alignat}{2} \label{eq:ddF}
    F''(\xi,\xt)[\dx,\dxt]^2
    &=		  4 (\xt^T w) (\dxt^T w_x \dx)      &\quad
    &		+ 4 (\dxt^T w) (\xt^T w_x \dx)      \notag\\
    &\quad	+ 2 (\dxt^T w)^2                    &\quad
    & 		+ 2 (\xt^T w_x \dx)^2               \notag\\
    &\quad	+ 2 (\xt^T w) w_{xx}[\xt,\dx,\dx]   &\quad
    &	  	+ (\dx^T g''\dx) (\xt^T\xt)         \notag\\
    &\quad	+ 4(g'\dx) (\dxt^T\xt)              &\quad
    &		+ 2g(\dxt^T \dxt),
\end{alignat}
which yields
\begin{align} \label{eq:ddF-upper-bound-mixed}
    |F''(\xi,\xt)[\dx,\dxt][\Dx,\Dxt]| &\le
    \left(4 \oc_1^2 + 4\oc_0 \oc_2\right) \nxt^2 \ndx \nDx \notag\\
	&\quad + 8 \oc_0 \oc_1 \nxt \ndx \nDxt \notag\\
	&\quad + 8 \oc_0 \oc_1 \nxt \ndxt \nDx \notag\\
	&\quad + 2 \oov^2 \ndxt \nDxt
\end{align}
and
\begin{align} \label{eq:ddF-upper-bound}
    |F''(\xi,\xt)[\dx,\dxt]^2| &\le
    \left(4 \oc_1^2 + 4\oc_0 \oc_2\right) \nxt^2 \ndx^2 \notag\\
	&\quad +16 \oc_0 \oc_1 \nxt \ndx \ndxt \notag\\
	&\quad + 2 \oov^2 \ndxt^2 ,
\end{align}
respectively.
%
%
The third derivative is
\begin{align} \label{eq:dddF}
    &\quad F'''(\xi,\xt)[\dx,\dxt]^2 [\Dx,\Dxt] \notag\\
    &=		  4 (\Dxt^T w) (\dxt^T w_x \dx)         &
    &		+ 4 (\xt^T w_x \Dx) (\dxt^T w_x \dx)    \notag\\
    &\quad	+ 4 (\xt^T w) w_{xx}[\dxt,\dx,\Dx]      &
    &		+ 4 (\dxt^T w_x \Dx) (\xt^T w_x \dx)    \notag\\
    &\quad	+ 4 (\dxt^T w) (\Dxt^T w_x \dx)         &
    &		+ 4 (\dxt^T w) w_{xx}[\xt,\dx,\Dx]      \notag\\
    &\quad	+ 4 (\dxt^T w) (\dxt^T w_x \Dx)         &
    &		+ 4 (\xt^T w_x \dx) (\Dxt^T w_x \dx)    \notag\\
    &\quad	+ 4 (\xt^T w_x \dx) w_{xx}[\xt,\dx,\Dx] &
    &		+ 2 (\Dxt^T w) w_{xx}[\xt,\dx,\dx]      \notag\\
    &\quad	+ 2 (\xt^T w_x \Dx) w_{xx}[\xt,\dx,\dx] &
    &		+ 2 (\xt^T w) w_{xxx}[\xt,\dx,\dx,\Dx]  \notag\\
    &\quad	+ 2 (\xt^T w) w_{xx}[\Dxt,\dx,\dx]      &
    &	  	+   g'''[\dx,\dx,\Dx] (\xt^T\xt)        \notag\\
    &\quad	+ 2 (\dx^T g''\dx) (\Dxt^T \xt)         &
    &		+ 4 (\Dx^T g''\dx) (\dxt^T \xt)         \notag\\
    &\quad	+ 4 (g'\dx) (\dxt^T \Dxt)               &
    &		+ 2 (g'\Dx) (\dxt^T \dxt),
\end{align}
which is bounded by
\begin{align} \label{eq:ddfF-upper-bound}
    |F'''(\xi,\xt)[\dx,\dxt]^2 [\Dx,\Dxt]|
    &\le	  4 \nxt^2 (\oc_0 \oc_3 +3 \oc_1 \oc_2)\ndx^2       \nDx  \notag\\
    &\quad	+ 8 \nxt   (\oc_1^2+\oc_0\oc_2)        \ndx^2       \nDxt \notag\\
    &\quad	+16 \nxt   (\oc_1^2+\oc_0\oc_2)        \ndx \ndxt   \nDx  \notag\\
    &\quad	+16        \oc_0 \oc_1                 \ndx \ndxt   \nDxt \notag\\
    &\quad	+ 8        \oc_0 \oc_1                      \ndxt^2 \nDx.
\end{align}
%
%
%
%
For the length term $f_2 = g^{-1} \sqrt{F}$, we thus obtain
\begin{align} \label{eq:df2}
    f_2'(\xi,\xt)[\dx,\dxt] = -g^{-2} (g'\dx) F^{1/2}
        + \frac12 g^{-1} F^{-1/2} F'[\dx,\dxt],
\end{align}
which is bounded by
\begin{align} \label{eq:df2-upper-bound}
    |f_2'(\xi,\xt)[\dx,\dxt]|
    &\le
        \left(2 \frac{\oc_0 \oc_1 \oov}{\uv^4} + 4 \frac{\oc_0 \oc_1}{\uv^3}\right) \nxt \ndx
        + 2 \uv^{-3} \oov^2 \ndxt.
\end{align}
%
The second derivative is
\begin{alignat}{2} \label{eq:ddf2-mixed}
    f_2''(\xi,\xt)[\dx,\dxt][\Dx,\Dxt] =
    &\quad 	 2g^{-3} (g'\Dx) (g'\dx) F^{1/2}                       \notag\\
    &		- g^{-2} (\dx^T g''\Dx) F^{1/2}                        \notag\\
    &     	- \frac12 g^{-2} (g'\dx) F^{-1/2} F'[\Dx,\Dxt]        \notag\\
    &       - \frac12 g^{-2} (g'\Dx) F^{-1/2} F'[\dx,\dxt]         \notag\\
    &     	+ \frac12 g^{-1} F^{-1/2} F''[\dx,\dxt][\Dx,\Dxt]     \notag\\
    &    	- \frac14 g^{-1} F^{-3/2} F'[\dx,\dxt] F'[\Dx,\Dxt]
\end{alignat}
and in particular
\begin{alignat}{2} \label{eq:ddf2}
    f_2''(\xi,\xt)[\dx,\dxt]^2 =
    &\quad 	 2g^{-3} (g'\dx)^2 F^{1/2}                      \notag\\
    &		- g^{-2} (\dx^T g''\dx) F^{1/2}                 \notag\\
    &     	- g^{-2} (g'\dx) F^{-1/2} F'[\dx,\dxt]          \notag\\
    &		+ \frac12 g^{-1} F^{-1/2} F''[\dx,\dxt]^2       \notag\\
    &     	- \frac14 g^{-1} F^{-3/2} (F'[\dx,\dxt])^2,
\end{alignat}
which yields
\begin{align} \label{eq:ddf2-upper-bound-mixed}
    &\hspace{-1cm}|f_2''(\xi,\xt)[\dx,\dxt][\Dx,\Dxt]| \notag\\
    &\le \left[
        8   \frac{\oc_0^2 \oc_1^2 \oov}{\uv^6}
	    +12 \frac{\oc_0^2 \oc_1^2}{\uv^5}
	    + 2 \frac{(\oc_1^2 + \oc_0 \oc_2) \oov}{\uv^4}
        + 2 \frac{\oc_1^2 + \oc_0 \oc_2}{\uv^3}
	\right] \nxt \ndx \nDx \notag \\
	&\quad+ \left[
	    4 \frac{\oc_0 \oc_1 \oov^2}{\uv^5}
	    + 4 \frac{\oc_0 \oc_1}{\uv^3}
	\right] \ndx \nDxt \notag\\
	&\quad+ \left[
	    4 \frac{\oc_0 \oc_1 \oov^2}{\uv^5}
	    + 4 \frac{\oc_0 \oc_1}{\uv^3}
	\right] \ndxt \nDx \notag\\
	&\quad+ \left[
	    \frac{\oov^4}{\uv^5}
	    + \frac{\oov^2}{\uv^3}
	\right] \nxt^{-1} \ndxt \nDxt
\end{align}
and
\begin{align} \label{eq:ddf2-upper-bound}
    &\hspace{-1cm}|f_2''(\xi,\xt)[\dx,\dxt]^2| \notag\\
    &\le \left[
        8   \frac{\oc_0^2 \oc_1^2 \oov}{\uv^6}
        +12 \frac{\oc_0^2 \oc_1^2}{\uv^5}
        + 2 \frac{(\oc_1^2 + \oc_0 \oc_2) \oov}{\uv^4}
        + 2 \frac{\oc_1^2 + \oc_0 \oc_2}{\uv^3}
    \right] \nxt \ndx^2 \notag \\
    &\quad+ \left[
        8 \frac{\oc_0 \oc_1 \oov^2}{\uv^5}
        + 8 \frac{\oc_0 \oc_1}{\uv^3}
    \right] \ndx \ndxt \notag\\
    &\quad+ \left[
        \frac{\oov^4}{\uv^5}
        + \frac{\oov^2}{\uv^3}
    \right] \nxt^{-1} \ndxt^2
\end{align}
%
%
The third derivative is
\begin{alignat}{2} \label{eq:dddf2}
    f_2'''(\xi,\xt)[\dx,\dxt]^2 [\Dx,\Dxt]
    =& 		-6g^{-4} (g'\Dx) (g'\dx)^2 F^{1/2}                               \notag\\    
    &		+4g^{-3} (g'\dx) (\Dx^Tg''\dx) F^{1/2}                           \notag\\  
    &     	+ g^{-3} (g'\dx)^2 F^{-1/2} F'[\Dx,\Dxt]                        \notag\\    
    &		+2g^{-3} (g'\Dx) (\dx^T g''\dx) F^{1/2}                          \notag\\  
    &     	- g^{-2} g'''[\dx,\dx,\Dx] F^{1/2}                               \notag\\    
    &		-\frac12 g^{-2} (\dx^T g''\dx) F^{-1/2} F'[\Dx,\Dxt]            \notag\\
    &     	+        g^{-3} (g'\Dx) (g'\dx) F^{-1/2} F'[\dx,\dxt]            \notag\\
    &		-\frac12 g^{-2} (\Dx^Tg''\dx) F^{-1/2} F'[\dx,\dxt]              \notag\\
    &     	+\frac14 g^{-2} (g'\dx) F^{-3/2} F'[\dx,\dxt]  F'[\Dx,\Dxt]     \notag\\
    &		-\frac12 g^{-2} (g'\dx) F^{-1/2} F''[\dx,\dxt][\Dx,\Dxt]        \notag\\
    &     	+         g^{-3} (g'\Dx) (g'\dx) F^{-1/2} F'[\dx,\dxt]           \notag\\
    &		- \frac12 g^{-2} (\Dx g''\dx) F^{-1/2} F'[\dx,\dxt]              \notag\\
    &     	+ \frac14 g^{-2} (g'\dx) F^{-3/2} F'[\dx,\dxt] F'[\Dx,\Dxt]     \notag\\
    &		- \frac12 g^{-2} (g'\dx) F^{-1/2} F''[\dx,\dxt][\Dx,\Dxt]       \notag\\
    &     	+ \frac14 g^{-2} (g'\Dx) F^{-3/2} (F'[\dx,\dxt])^2               \notag\\
    &		+ \frac38 g^{-1} F^{-5/2} (F'[\dx,\dxt])^2 F'[\Dx,\Dxt]         \notag\\
    &     	- \frac12 g^{-1} F^{-3/2} F'[\dx,\dxt] F''[\dx,\dxt][\Dx,\Dxt]  \notag\\
    &		- \frac12 g^{-2} (g'\Dx) F^{-1/2} F''[\dx,\dxt]^2                \notag\\
    &     	- \frac14 g^{-1} F^{-3/2} F''[\dx,\dxt]^2 F'[\Dx,\Dxt]          \notag\\
    &		+ \frac12 g^{-1} F^{-1/2} F'''[\dx,\dxt]^2[\Dx,\Dxt],
\end{alignat}
which is bounded by
\renewcommand{\myindent}{\hspace{1.5cm}}
\begin{align} \label{eq:dddf2-upper-bound}
    &\hspace{-1cm}|f_2'''(\xi,\xt)[\dx,\dxt]^2 [\Dx,\Dxt]| \notag\\
    &\le \frac{2\nxt}{\uv} \bigg[
                \frac{\oc_3}{\uv} \left(
                     \frac{\oc_0}{\uv}
                    + \frac{\oov\oc_0}{\uv^2}
                \right)
                \notag \\ &\myindent
                + \frac{3\oc_1 \oc_2}{\uv^2} \left(
                      1
                    +   \frac{\oov}{\uv}
                    + 6 \frac{\oc_0^2}{\uv^2}
                    + 4 \frac{\oov \oc_0^2}{\uv^3}
                \right)
                \notag \\ &\myindent
                + \frac{6\oc_1^3}{\uv^3} \left(
                     3 \frac{\oc_0}{\uv}
                    +2\frac{\oov \oc_0}{\uv^2}
                    +8\frac{\oc_0^3}{\uv^3}
                    +4\frac{\oov \oc_0^3}{\uv^4}
                \right)
    \bigg] \ndx^2 \nDx \notag\\
    &\quad + \frac4{\uv} \bigg[
                \frac{\oc_1^2}{\uv^2}  \left(
                      1
                    +  \frac{\ov^2}{\uv^2}
                    + 9\frac{\oc_0^2}{\uv^2}
                    + 7\frac{\oc_0^2 \oov^2}{\uv^4}
                \right)
                \notag \\ &\myindent
                +
                \frac{\oc_2}{\uv} \left(
                      \frac{\oc_0}{\uv}
                    + \frac{\ov^2 \oc_0}{\uv^3}
                    + \frac{\oc_0^3}{\uv^3}
                \right)
    \bigg] \ndx^2 \nDxt  \notag\\
    &\quad + \frac8{\uv} \bigg[
                \frac{\oc_1^2}{\uv^2} \left(
                    1
                    +  \frac{\ov^2}{\uv^2}
                    + 9\frac{\oc_0^2}{\uv^2}
                    + 7\frac{\oc_0^2 \oov^2}{\uv^4}
                \right)
                \notag \\ &\myindent
                + \frac{\oc_2}{\uv} \left(
                      \frac{\oc_0}{\uv}
                    + \frac{\ov^2\oc_0}{\uv^3}
                    + \frac{\oc_0^3}{\uv^3}
                \right)
    \bigg] \ndx \ndxt \nDx \notag\\
    &\quad + \frac{8\oc_0\oc_1}{\nxt\uv^3} \left(
                1
                +3\frac{\oov^2}{\uv^2}
                +2\frac{\oov^4}{\uv^4}
    \right) \ndx \ndxt \nDxt \notag\\
    &\quad + \frac{4\oc_0\oc_1}{\nxt\uv^3} \left(
                1
                +3\frac{\oov^2}{\uv^2}
                +2\frac{\oov^4}{\uv^4}
    \right) \ndxt^2 \nDx \notag\\
    &\quad + \frac{3\oov^4}{\nxt^2\uv^5} \left(
                1
                + \frac{\oov^2}{\uv^2}
    \right) \ndxt^2 \nDxt.
\end{align}

\newcounter{theoremback}
\setcounter{theoremback}{\value{theorem}}
\setcounter{theorem}{\value{lemma-dddf-upper-bound-stored}}
\renewcommand{\myindent}{\hspace{2.5cm}}
\begin{lemma}
    Let $\|w(p)\| \le \oc_0 \le \ov/\sqrt5$, $\|w_x(p)\| \le \oc_1$, $\|w_{xx}(p)\| \le \oc_2$, and $\|w_{xxx}(p)\| \le \oc_3$ for every $p\in\Omega$. Moreover let $\uv^2 := \ov^2 - \oc_0^2$ and $\oov^2 := \ov^2 + \oc_0^2$. Then, for any $\xi\in X$, the third directional derivative of $f$ as given in~\eqref{eq:dt-dtau} is bounded by
    \renewcommand{\myindent}{\hspace{2cm}}
    \begin{alignat*}{2}
        \hspace{3cm}&\hspace{-3cm}|f'''(\xi,\xt) [\dx,\dxt]^2[\Dx,\Dxt]| \\
        \le~&\bigg(\nxt \og_0 \ndx^2 + \og_2 \ndx \ndxt + \frac{\og_4}{\nxt} \ndxt^2&&\bigg)~ \nDx \notag\\
        +&\bigg(\og_1 \ndx^2 + \frac{\og_3}{\nxt} \ndx \ndxt +\frac{\og_5}{\nxt^2} \ndxt^2 &&\bigg)~ \nDxt
    \end{alignat*}
    with
    \begin{align*}
        \og_0 &= \frac{2}{\uv^4} \left(
            37 \oc_1^3
            +21 \oc_1 \oc_2 \uv
            +2 \oc_3 \uv^2
        \right),
        & \og_3 &= 40\frac{\oc_1}{\uv^2 \nxt},
        \\
        \og_1 &= \frac1{\uv^3} \left(29 \oc_1^2 + 7 \uv \oc_2\right),
        & \og_4 &= 20\frac{\oc_1}{\uv^2 \nxt},
        \\
        \og_2 &=\frac1{\uv^3} (57 \oc_1^2 + 13\uv \oc_2),
        & \og_5 &= 18 \frac1{\uv \nxt^2}.
    \end{align*}
\end{lemma}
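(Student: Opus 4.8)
The plan is to exploit the additive structure $f = f_1 + f_2$ from \eqref{eq:f1}--\eqref{eq:f2}, so that by linearity of differentiation $f''' = f_1''' + f_2'''$ and, by the triangle inequality, $|f'''(\xi,\xt)[\dx,\dxt]^2[\Dx,\Dxt]| \le |f_1'''[\dx,\dxt]^2[\Dx,\Dxt]| + |f_2'''[\dx,\dxt]^2[\Dx,\Dxt]|$. The componentwise estimates for the two summands have already been assembled in \eqref{eq:dddf1-upper-bound} and \eqref{eq:dddf2-upper-bound} (with the third direction written there as $\Dx,\Dxt$ and as $\tdx,\tdxt$, respectively, which I identify). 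This mirrors exactly the proofs of the first- and second-derivative lemmas, in particular \Cref{th:df-upper-bound}, where the same decompose--bound--combine pattern was used; no new analytic input beyond these precomputed bounds is required.

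Next I would collect the resulting terms according to the six admissible products of norms appearing on the right-hand side, namely $\nxt\ndx^2\nDx$, $\ndx\ndxt\nDx$, $\nxt^{-1}\ndxt^2\nDx$, $\ndx^2\nDxt$, $\nxt^{-1}\ndx\ndxt\nDxt$, and $\nxt^{-2}\ndxt^2\nDxt$. Tracking which contribution lands in which slot, the coefficient $\og_0$ receives the $\nxt\ndx^2\nDx$ terms from both \eqref{eq:dddf1-upper-bound} and \eqref{eq:dddf2-upper-bound}; $\og_2$ the $\ndx\ndxt\nDx$ terms from both; and $\og_1$ the $\ndx^2\nDxt$ terms from both; whereas $\og_4$, $\og_3$, and $\og_5$ collect the three remaining products, which arise solely from the length term \eqref{eq:dddf2-upper-bound}.

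To turn these raw coefficients into the stated clean constants, I would uniformly apply the elementary ratio bounds that follow from the hypothesis $\oc_0 \le \ov/\sqrt5$, namely $\oc_0/\uv \le 1/2$, $\oov/\uv \le \sqrt{3/2}$, and $\ov/\uv \le \sqrt5/2$ (the same estimates invoked in \Cref{th:df-upper-bound}), and finally round each numerical coefficient upward to obtain the values of $\og_0,\dots,\og_5$ claimed in the statement.

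The main obstacle is purely the bookkeeping in the length-term bound \eqref{eq:dddf2-upper-bound}: its coefficients are sums of several monomials in the ratios $\oc_0/\uv$, $\oov/\uv$, and $\ov/\uv$ multiplied by $\oc_1$, $\oc_2$, or $\oc_3$, so each of the six slots aggregates many subterms. The care required is, first, to normalise every term to one of the six canonical norm-products without losing or double-counting any contribution, and second, to carry out the substitution of the ratio bounds and the concluding rounding consistently, so that the rounded constants genuinely dominate the exact expressions. Apart from this accounting, the argument is routine.
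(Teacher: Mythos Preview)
Your proposal is correct and follows essentially the same approach as the paper: decompose $f=f_1+f_2$, apply the triangle inequality together with the precomputed bounds \eqref{eq:dddf1-upper-bound} and \eqref{eq:dddf2-upper-bound}, sort into the six norm products, simplify via the ratio bounds $\oc_0/\uv\le 1/2$, $\oov/\uv\le\sqrt{3/2}$, $\ov/\uv\le\sqrt5/2$, and round up. Your observation that $\og_3,\og_4,\og_5$ receive contributions only from the length term is also in agreement with the paper's computation.
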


\begin{proof}
    We obtain $f$ by adding $f_1$ and $f_2$. The third derivative of $f$ can thus be bounded using~\eqref{eq:dddf1-upper-bound}, \eqref{eq:dddf2-upper-bound}, and the triangle inequality.

    \renewcommand{\myindent}{\hspace{1.5cm}}
    \begin{align*}
        &\quad|f'''[\dx,\dxt]^2 [\Dx,\Dxt]| \\
        &\le
        \frac{\nxt}{\uv} \bigg[
            \frac{\oc_3}{\uv} \left(
                1
                + 2\frac{\oc_0}{\uv}
                + 2\frac{\oov\oc_0}{\uv^2}
                + 2\frac{\oc_0^2}{\uv^2}
            \right)
            \notag \\ &\myindent
            + 6\frac{\oc_1 \oc_2}{\uv^2} \left(
                1
                + 1\frac{\oov}{\uv}
                + 3\frac{\oc_0}{\uv}
                + 6\frac{\oc_0^2}{\uv^2}
                + 4\frac{\oov \oc_0^2}{\uv^3}
                + 4\frac{\oc_0^3}{\uv^3}
            \right)
            \notag \\ &\myindent
            + 6\frac{\oc_1^3}{\uv^3} \left(
                  1
                + 6\frac{\oc_0}{\uv}
                + 4\frac{\oov \oc_0}{\uv^2}
                + 8\frac{\oc_0^2}{\uv^2}
                +16\frac{\oc_0^3}{\uv^3}
                + 8\frac{\oov \oc_0^3}{\uv^4}
                + 8\frac{\oc_0^4}{\uv^4}
            \right)
        \bigg] \ndx^2 \nDx \notag\\
        &\quad + \frac1{\uv} \bigg[
            2\frac{\oc_1^2}{\uv^2} \left(
                2
                + 3\frac{\oc_0}{\uv}
                + 2\frac{\ov^2}{\uv^2}
                +18\frac{\oc_0^2}{\uv^2}
                + 4\frac{\oc_0^3}{\uv^3}
                +14\frac{\oc_0^2 \oov^2}{\uv^4}
            \right)
            \notag \\ &\myindent
            + \frac{\oc_2}{\uv} \left(
                1
                + 4\frac{\oc_0}{\uv}
                + 2\frac{\oc_0^2}{\uv^2}
                + 4\frac{\ov^2 \oc_0}{\uv^3}
                + 4\frac{\oc_0^3}{\uv^3}
            \right)
        \bigg] \ndx^2 \nDxt  \notag\\
        &\quad + \frac1{\uv} \bigg[
            4 \frac{\oc_1^2}{\uv^2} \left(
                2
                + 4\frac{\oc_0^3}{\uv^3}
                + 3\frac{\oc_0}{\uv}
                + 2\frac{\ov^2}{\uv^2}
                +18\frac{\oc_0^2}{\uv^2}
                +14\frac{\oc_0^2 \oov^2}{\uv^4}
                \right)
            \notag \\ &\myindent
            + 2\frac{\oc_2}{\uv} \left(
                1
                + 4\frac{\oc_0}{\uv}
                + 2 \frac{\oc_0^2}{\uv^2}
                + 4\frac{\ov^2\oc_0}{\uv^3}
                + 4\frac{\oc_0^3}{\uv^3}
            \right)
        \bigg] \ndx \ndxt \nDx \notag\\
        &\quad + \frac{8\oc_0\oc_1}{\nxt\uv^3} \left(
                    1
                    +3\frac{\oov^2}{\uv^2}
                    +2\frac{\oov^4}{\uv^4}
        \right) \ndx \ndxt \nDxt \notag\\
        &\quad + \frac{4\oc_0\oc_1}{\nxt\uv^3} \left(
                    1
                    +3\frac{\oov^2}{\uv^2}
                    +2\frac{\oov^4}{\uv^4}
        \right) \ndxt^2 \nDx \notag\\
        &\quad + \frac{3\oov^4}{\nxt^2\uv^5} \left(
                    1
                    + \frac{\oov^2}{\uv^2}
        \right) \ndxt^2 \nDxt.
    \end{align*}
    With $\frac{\oc_0}{\ov} \le \frac1{\sqrt 5}$, we note that
    \[
        \frac{\oc_0}{\uv} \le \frac12, \qquad
        \frac{\oov}{\uv}  \le \sqrt{\frac32}, \quad\text{and} \qquad
        \frac{\ov}{\uv}   \le \frac{\sqrt5}{2}
    \]
    and obtain
    \renewcommand{\myindent}{\hspace{1.5cm}}
    \begin{align*}
        |f'''[\dx,\dxt]^2 [\Dx,\Dxt]|
        &\le
        \frac{\nxt}{\uv} \bigg[
            \frac{\oc_3}{\uv} \left(
                \frac52
                + \sqrt{\frac32}
            \right)
            \notag \\ &\myindent
            + 6\frac{\oc_1 \oc_2}{\uv^2} \left(
                \frac92
                + 2\sqrt{\frac32}
            \right)
            \notag \\ &\myindent
            + 6\frac{\oc_1^3}{\uv^3} \left(
                  \frac{17}{2}
                + 3 \sqrt{\frac32}
            \right)
        \bigg] \ndx^2 \nDx \notag\\
        &\quad + \frac1{\uv} \bigg[
            \frac{57}{2} \frac{\oc_1^2}{\uv^2}
            + \frac{13}{2} \frac{\oc_2}{\uv}
        \bigg] \ndx^2 \nDxt  \notag\\
        &\quad + \frac1{\uv} \bigg[
            57 \frac{\oc_1^2}{\uv^2}
            + 13\frac{\oc_2}{\uv}
        \bigg] \ndx \ndxt \nDx \notag\\
        &\quad + 40 \frac{\oc_1}{\nxt\uv^2} \ndx \ndxt \nDxt \notag\\
        &\quad + 20 \frac{\oc_1}{\nxt\uv^2} \ndxt^2 \nDx \notag\\
        &\quad + \frac{135}{8\nxt^2\uv} \ndxt^2 \nDxt,
    \end{align*}
    Rounding up the values yields the bound
    \begin{align*}
        |f'''[\dx,\dxt]^2 [\Dx,\Dxt]|
        &\le
        \frac{\nxt}{\uv} \bigg[
            4\frac{\oc_3}{\uv}
            + 42\frac{\oc_1 \oc_2}{\uv^2}
            + 74\frac{\oc_1^3}{\uv^3}
        \bigg] \ndx^2 \nDx \notag\\
        &\quad + \frac1{\uv} \bigg[
            29 \frac{\oc_1^2}{\uv^2}
            + 7 \frac{\oc_2}{\uv}
        \bigg] \ndx^2 \nDxt  \notag\\
        &\quad + \frac1{\uv} \bigg[
            57 \frac{\oc_1^2}{\uv^2}
            + 13\frac{\oc_2}{\uv}
        \bigg] \ndx \ndxt \nDx \notag\\
        &\quad + 40 \frac{\oc_1}{\nxt\uv^2} \ndx \ndxt \nDxt \notag\\
        &\quad + 20 \frac{\oc_1}{\nxt\uv^2} \ndxt^2 \nDx \notag\\
        &\quad + 18 \frac{1}{\nxt^2\uv} \ndxt^2 \nDxt.
    \end{align*}
\end{proof}
\setcounter{theorem}{\value{theoremback}}

\setcounter{theoremback}{\value{theorem}}
\setcounter{theorem}{\value{lemma-ddf-upper-bound-stored}}
\begin{lemma}
    Let $\|w(p)\| \le \oc_0 \le \ov/\sqrt5$, $\|w_x(p)\| \le \oc_1$, and $\|w_{xx}(p)\| \le \oc_2$ for every $p\in\Omega$. Moreover let $\uv^2 := \ov^2 - \oc_0^2$ and $\oov^2 := \ov^2 + \oc_0^2$. Then, for any $\xi\in X$, the second directional derivative of $f$ as given in~\eqref{eq:dt-dtau} is bounded by
    \begin{align*}
        |f''(\xi,\xt) [\dx,\dxt][\Dx,\Dxt]|
        \le
        &\ob_0 \nxt \ndx \nDx
        + \ob_1 \ndx \nDxt  \\
        &+ \ob_1 \ndxt \nDx
        + \ob_2 \nxt^{-1} \ndxt \nDxt
    \end{align*}
    with
    \[
        \ob_0 = 14 \frac{\oc_1^2}{\uv^3} + 4 \frac{\oc_2}{\uv^2}, \qquad
        \ob_1 = 7\frac{\oc_1}{\uv^2}, \qquad
    	\ob_2 = \frac4{\uv} .
    \]
\end{lemma}

\begin{proof}
    We obtain $f$ by adding $f_1$ and $f_2$. The second derivative of $f$ can thus be bounded using~\eqref{eq:ddf1-upper-bound-mixed}, \eqref{eq:ddf2-upper-bound-mixed}, and the triangle inequality.
    \begin{align*}
        |f''(\xi,\xt)[\dx,\dxt][\Dx,\Dxt]|
        &\le
        \bigg[
            8 \oc_0^2 \oc_1^2\frac{\oc_0 + \oov}{\uv^6}
            +12 \frac{\oc_0^2 \oc_1^2}{\uv^5} \\&\qquad
            + 2 \frac{\oc_1^2 \oov + \oc_0 \oc_2 \oov + 3 \oc_0 \oc_1^2 + \oc_0^2 \oc_2}{\uv^4} \\&\qquad
            + 2 \frac{\oc_1^2 + \oc_0 \oc_2}{\uv^3}
            +   \frac{\oc_2}{\uv^{2}}
        \bigg] \nxt \ndx \nDx \\
        &\quad+ \bigg[
    	    4 \frac{\oc_0 \oc_1 \oov^2}{\uv^5}
            + 2 \frac{\oc_0^2 \oc_1}{\uv^4}
    	    + 4 \frac{\oc_0 \oc_1}{\uv^3}
            + \frac{\oc_1}{\uv^2}
        \bigg] \ndxt \nDx    \\
        &\quad+ \bigg[
    	    4 \frac{\oc_0 \oc_1 \oov^2}{\uv^5}
        	+ 2 \frac{\oc_0^2 \oc_1}{\uv^4}
    	    + 4 \frac{\oc_0 \oc_1}{\uv^3}
            + \frac{\oc_1}{\uv^2}
        \bigg] \ndx       \nDxt   \\
    	&\quad+ \left[
    	    \frac{\oov^4}{\uv^5}
    	    + \frac{\oov^2}{\uv^3}
    	\right] \nxt^{-1} \ndxt \nDxt .
    \end{align*}
	With $\frac{\oc_0}{\ov} \le \frac1{\sqrt 5}$, we note that
	\[
		\frac{\oc_0}{\uv} \le \frac12, \qquad
		\frac{\oov}{\uv}  \le \sqrt{\frac32}, \quad\text{and} \qquad
		\frac{\ov}{\uv}   \le \frac{\sqrt5}{2}
	\]
	and obtain
    \begin{align*}
        |f''(\xi,\xt)[\dx,\dxt][\Dx,\Dxt]|
        &\le
        \left[
            \left(9 + 4 \sqrt{\frac32} \right)\frac{\oc_1^2}{\uv^3}
            + 4 \frac{\oc_2}{\uv^2}
        \right] \nxt \ndx \nDx \\
        &\quad+ \frac{13\oc_1}{2\uv^2} \ndxt \nDx    \\
        &\quad+ \frac{13\oc_1}{2\uv^2} \ndx \nDxt   \\
    	&\quad+ \frac{15}{4\uv} \nxt^{-1} \ndxt \nDxt .
    \end{align*}
    Rounding up the values yields the bound
    \begin{align*}
        |f''(\xi,\xt)[\dx,\dxt][\Dx,\Dxt]|
        &\le
        \bigg[
            14 \frac{\oc_1^2}{\uv^3}
            + 4 \frac{\oc_2}{\uv^2}
        \bigg] \nxt \ndx \nDx \\
        &\quad+ 7\frac{\oc_1}{\uv^2} \ndxt \nDx    \\
        &\quad+ 7\frac{\oc_1}{\uv^2} \ndx \nDxt   \\
    	&\quad+ \frac4{\uv} \nxt^{-1} \ndxt \nDxt .
    \end{align*}
\end{proof}
\setcounter{theorem}{\value{theoremback}}

\subsection{Bounds in a Neighborhood of a Minimizer}
Below we derive bounds that hold in a $L^\infty$-neighborhood of a global minimizer.
Let $\glob\chi = (\glob z, \glob\lambda)$ be a global minimizer of \eqref{eq:reduced-problem} and the corresponding Lagrange multipliers. Moreover, let $\chi_1,\chi_2 \in \Nhood[\glob\chi]$ and define $\Dchi := \chi_2-\chi_1$. Then it holds that $\Yinf{\Dchi} \le 2R$ and consequently
\begin{align}
    \Linf\Dx  &\underset{\eqref{eq:yinf-norm}}{\le} 2R, \\
    \Linf\Dxt &\underset{\eqref{eq:yinf-norm}}{\le} 2R.
\end{align}
Let $\Linf[\Omega]w {\le} \oc_0$,
$\Linf[\Omega]{w_x} {\le} \oc_1$,
$\Linf[\Omega]{w_{xx}} {\le} \oc_2$, and
$\Linf[\Omega]{w_{xxx}} {\le} \oc_3$,
then the following bounds hold,
\begin{alignat}{3}
    \|w(\xi_2) - w(\xi_1)\|
    &= |\int_0^1 w_x(\xi_1 + \mu \Dx)[\Dx] d\mu \; |  \;
    &\le \oc_1 \nDx
    &\le 2R \oc_1,  \\
    \|w_x(\xi_2) - w_{x,}(\xi_1)\|
    &= |\int_0^1 w_{xx}(\xi_1 + \mu \Dx)[\Dx]  d\mu \; |
    &\le \oc_2 \nDx
    &\le 2R \oc_2, \\
    \|w_{xx}(\xi_2) - w_{xx}(\xi_1)\|
    &= |\int_0^1 w_{xxx}(\xi_1 + \mu \Dx)[\Dx]  d\mu \; |
    &\le \oc_3 \nDx
    &\le 2R \oc_3.
\end{alignat}

\noindent
Moreover, we show that
\begin{align}
    |g(\xi_2)-g(\xi_1)|
    &= |\ov^2 - w(\xi_2)^Tw(\xi_2) - \ov^2 + w(\xi_1)^Tw(\xi_1)| \notag\\
    &= |w(\xi_2)^Tw(\xi_2) - w(\xi_1)^Tw(\xi_1)| \notag\\
    &\le 2 \oc_0 \oc_1 \nDx \notag\\
    &\le 4R \oc_0 \oc_1 \\[1.5ex]
    |g(\xi_2)^2-g(\xi_1)^2|
    &= |(g(\xi_2)-g(\xi_1))(g(\xi_2)+g(\xi_1))| \notag\\
    &\le (2 \oc_0 \oc_1 \nDx) (2\ov^2) \notag\\
    &\le 4 \oc_0 \oc_1 \ov^2 \nDx \notag\\
    &\le 8R \oc_0 \oc_1 \ov^2 \\[1.5ex]
    |g(\xi_2)^3 - g(\xi_1)^3|
    &= |g(\xi_2)-g(\xi_1)| \; |g(\xi_1)^2 + 2g(\xi_1)g(\xi_2) + g(\xi_2)^2| \notag\\
    &\le (2 \oc_0 \oc_1 \nDx) (4\ov^4) \notag\\
    &\le 8 \oc_0 \oc_1 \ov^4 \nDx \notag\\
    &\le 16R \oc_0 \oc_1 \ov^4 \\[1.5ex]
    |g'(\xi_2) - g'(\xi_1)|
    &= |\int_0^1 g''(\xi_1 + \mu \Dx)[\Dx] d\mu| \notag\\
    &\le 2(\oc_1^2 + \oc_0\oc_2) \nDx \notag\\
    &\le 4R (\oc_1^2 + \oc_0\oc_2) \\[1.5ex]
    \|g''(\xi_2) - g''(\xi_1)\|
    &= |\int_0^1 g'''(\xi_1 + \mu \Dx)[\Dx] d\mu| \notag\\
    &\le 2(3\oc_1\oc_2 + \oc_0\oc_3) \nDx \notag\\
    &\le 2R (3\oc_1\oc_2 + \oc_0\oc_3)
\end{align}
Furthermore, with $F$ as given in \eqref{eq:F}, and \eqref{eq:general-bounds} we get
\[
    \uv^2 (\glob L - R)^2 \le \uv^2 \nxt^2 \le F \le \nxt^2 \oov^2 \le (\glob L + R)^2 \oov^2
\]
and
\begin{align}
    |F'(\xi,\xt)[\Dx,\Dxt]|
    &\le 2 \oov^2 \nxt \nDxt
        + 4 \oc_0 \oc_1 \nxt^2 \nDx \notag\\
    &\le 2 \oov^2 (\glob L + R) \nDxt
        + 4 \oc_0 \oc_1 (\glob L + R)^2 \nDx.
\end{align}
This yields
\renewcommand{\myindent}{\hspace{3cm}}
\begin{align}
    &|F(\xi_2,\xi_{\tau,2})^{1/2} - F(\xi_1,\xi_{\tau,1})^{1/2}| \notag\\
    &\myindent\le \frac12 | \int_0^1 F(\xi_1+\mu\Dx)^{-1/2} F'(\xi_1+\mu\Dx) [\Dx,\Dxt] d\mu | \notag\\
    &\myindent\le \frac{\oov^2 (\glob L + R)}{\uv (\glob L - R)} \nDxt
        + \frac{2 \oc_0 \oc_1 (\glob L + R)^2}{\uv (\glob L - R)} \nDx \\[1.5ex]
    &|F(\xi_2,\xi_{\tau,2})^{-1/2} - F(\xi_1,\xi_{\tau,1})^{-1/2}| \notag\\
    &\myindent\le \frac12 | \int_0^1 F(\xi_1+\mu\Dx)^{-3/2} F'(\xi_1+\mu\Dx) d\mu \notag\\
    &\myindent\le \frac{\oov^2 (\glob L + R)}{\uv^3 (\glob L - R)^3} \nDxt
        + \frac{2 \oc_0 \oc_1 (\glob L + R)^2}{\uv^3 (\glob L - R)^3} \nDx\\[1.5ex]
    &|F(\xi_2,\xi_{\tau,2})^{-3/2} - F(\xi_1,\xi_{\tau,1})^{-3/2}|  \notag\\
    &\myindent\le \frac32 | \int_0^1 F(\xi_1+\mu\Dx)^{-5/2} F'(\xi_1+\mu\Dx) d\mu | \notag\\
    &\myindent\le \frac{\oov^2 (\glob L + R)}{\uv^5 (\glob L - R)^5} \nDxt
        + \frac{2 \oc_0 \oc_1 (\glob L + R)^2}{\uv^5 (\glob L - R)^5} \nDx
\end{align}

\noindent
For $f_1$ as defined in \eqref{eq:f1}, we obtain
\begin{alignat*}{2}
    &\hspace{-1cm}\left(f_1''(\xi_2,\xi_{\tau,2}) - f_1''(\xi_1,\xi_{\tau,1})\right))[\Dx,\Dxt][\dx,\dxt] \\
    = 
    & - 2g(\xi_2)^{-3} (g'(\xi_2)\dx) (\xi_{\tau,2}^T w(\xi_2)) (g'(\xi_2)\Dx)    \\
    & + 2g(\xi_1)^{-3} (g'(\xi_1)\dx) (\xi_{\tau,1}^T w(\xi_1)) (g'(\xi_1)\Dx)    \\
    & +  g(\xi_2)^{-3} g(\xi_2) (\dxt^T w(\xi_2)) (g'(\xi_2)\Dx)                    \\
    & -  g(\xi_1)^{-3} (\dxt^T w(\xi_1)) (g'(\xi_1)\Dx)                    \\
    & +  g(\xi_2)^{-3} (\xi_{\tau,2}^T w_x(\xi_2) \dx) (g'(\xi_2)\Dx)    \\
    & -  g(\xi_1)^{-3} (\xi_{\tau,1}^T w_x(\xi_1) \dx) (g'(\xi_1)\Dx)    \\
    & +  g(\xi_2)^{-3} (\xi_{\tau,2}^T w(\xi_2)) (\Dx^Tg''(\xi_2)\dx)      \\
    & -  g(\xi_1)^{-3} (\xi_{\tau,1}^T w(\xi_1)) (\Dx^Tg''(\xi_1)\dx)      \\
    & +  g(\xi_2)^{-3} (g'(\xi_2)\dx) (\xi_{\tau,2}^T w_x(\xi_2) \Dx)    \\
    & -  g(\xi_1)^{-3} (g'(\xi_1)\dx) (\xi_{\tau,1}^T w_x(\xi_1) \Dx)    \\
    & -  g(\xi_2)^{-3} (\dxt w_x(\xi_2) \Dx)                      \\
    & +  g(\xi_1)^{-3} (\dxt w_x(\xi_1) \Dx)                      \\
    & -  g(\xi_2)^{-3} w_{xx}(\xi_2)[\xi_{\tau,2},\Dx,\dx]          \\
    & +  g(\xi_1)^{-3} w_{xx}(\xi_1)[\xi_{\tau,1},\Dx,\dx]          \\
    & +  g(\xi_2)^{-3} (g'(\xi_2)\dx) (w(\xi_2)^T \Dxt)                    \\
    & -  g(\xi_1)^{-3} (g'(\xi_1)\dx) (w(\xi_1)^T \Dxt)                    \\
    & -  g(\xi_2)^{-3} (\Dxt^T w_x(\xi_2) \dx)                    \\
    & +  g(\xi_1)^{-3} (\Dxt^T w_x(\xi_1) \dx)
\end{alignat*}

\noindent
Using the bounds from above we finally obtain
\renewcommand{\myindent}{\hspace{1.5cm}}
\begin{multline} \label{eq:Delta-f1-bound}
    |\left(f_1''(\xi_2,\xi_{\tau,2}) - f_1''(\xi_1,\xi_{\tau,1})\right)[\Dx,\Dxt][\dx,\dxt]| \\
    \le \hat\beta_1 R \sqrt{\nDx^2+\nDxt^2} \sqrt{\ndx^2+\ndxt^2}
\end{multline}
with
\begin{align}
    \hat\beta_1 = \frac{4}{\uv^{12}} \big(
        &5
        +80 \oc_0 \oc_1 \ov^4
        + 8 \oc_0 \oc_1 \ov^2
        +12 \oc_0 \oc_1
        +16 \oc_0 \oc_2
        + 4 \oc_0 \oc_3  \notag\\
        &+16 \oc_1^2
        +12 \oc_1\oc_2
        + 4 \oc_1
        + 4 \oc_2
        + 2 \oc_3
    \big).
\end{align}

\noindent
For $f_2$ as defined in \eqref{eq:f2} we obtain
%
%
\begin{alignat*}{2}
    &\hspace{-1cm}(f_2''(\xi_2,\xi_{\tau,2})
    -f_2''(\xi_1,\xi_{\tau,1}))[\Dx,\Dxt][\dx,\dxt] \\ = 
        &~2 g(\xi_2)^{-3}(g'(\xi_2)\dx) (g'(\xi_2)\Dx) F(\xi_2)^{1/2}                      \\
        &-2 g(\xi_1)^{-3}(g'(\xi_1)\dx) (g'(\xi_1)\Dx) F(\xi_1)^{1/2}                      \\
        &- g(\xi_2)^{-2} (\Dx^T g''(\xi_2)\dx) F(\xi_2)^{1/2}                         \\
        &+ g(\xi_1)^{-2} (\Dx^T g''(\xi_1)\dx) F(\xi_1)^{1/2}                         \\
        &- \frac12 g(\xi_2)^{-2} (g(\xi_2)'\Dx) F(\xi_2)^{-1/2} F'(\xi_2)[\dx,\dxt]       \\
        &+ \frac12 g(\xi_1)^{-2} (g(\xi_1)'\Dx) F(\xi_1)^{-1/2} F'(\xi_1)[\dx,\dxt]       \\
        &- \frac12 g(\xi_2)^{-2} (g'(\xi_2)\dx) F(\xi_2)^{-1/2} F'(\xi_2)[\Dx,\Dxt]        \\
        &+ \frac12 g(\xi_1)^{-2} (g'(\xi_1)\dx) F(\xi_1)^{-1/2} F'(\xi_1)[\Dx,\Dxt]        \\
        &+ \frac12 g(\xi_2)^{-1} F(\xi_2)^{-1/2} F''(\xi_2)[\Dx,\Dxt][\dx,\dxt]      \\
        &- \frac12 g(\xi_1)^{-1} F(\xi_1)^{-1/2} F''(\xi_1)[\Dx,\Dxt][\dx,\dxt]      \\
        &- \frac14 g(\xi_2)^{-1} F(\xi_2)^{-3/2} F'(\xi_2)[\Dx,\Dxt] F'(\xi_2)[\dx,\dxt]  \\
        &+ \frac14 g(\xi_1)^{-1} F(\xi_1)^{-3/2} F'(\xi_1)[\Dx,\Dxt] F'(\xi_1)[\dx,\dxt]
\end{alignat*}

\noindent
Using the bounds from above, this yields
\begin{multline} \label{eq:Delta-f2-bound}
    |(f_2''(\xi_2,\xi_{\tau,2})
    -f_2''(\xi_1,\xi_{\tau,1}))[\Dx,\Dxt][\dx,\dxt]| \\
    \le \hat\beta_2 R \sqrt{\nDx^2 + \nDxt^2} \sqrt{\ndx^2 + \ndxt^2}
\end{multline}
with
\begin{align}
    \hat\beta_2 \le \frac{4}{\uv^{12}} \Bigg[
        &20
        +10 \oc_1
        + 7 \oc_2
        +   \oc_3
        +10 \oc_0 \oc_1
        +36 \oc_0 \oc_1 \ov^2
        +88 \oc_0 \oc_1 \ov^4  \notag\\
        &
        +20 \oc_0 \oc_2
        + 8 \oc_0 \oc_3
        +20 \oc_1^2
        +24 \oc_1\oc_2 \notag\\
        &
        + \left(
              \frac3{\uv (\glob L - R)}
            + \frac6{\uv^3 (\glob L - R)^3}
            + \frac6{\uv^5 (\glob L - R)^5}
        \right) \notag\\
        &\qquad\left(\oov^2 (\glob L + R) + 2 \oc_0 \oc_1 (\glob L + R)^2 \right)
    \Bigg].
\end{align}

\begin{lemma} \label{th:delta-f-bound}
    Let $\glob\xi,\glob L$ be a global minimizer of \eqref{eq:reduced-problem}. Moreover, let $\xi_1,\xi_2$ be given such that $\Cnorm{\xi_i - \glob\xi} \le R$ and define $\Delta\xi := \xi_1 - \xi_2$.
    Then there is a $\hat\B < \infty$ such that
    \begin{multline} \label{eq:delta-f-bound}
        |(f''(\xi_2,\xi_{\tau,2})
        -f''(\xi_1,\xi_{\tau,1}))[\Dx,\Dxt][\dx,\dxt]| \\
        \le \hat\B R \sqrt{\nDx^2 + \nDxt^2} \sqrt{\ndx^2 + \ndxt^2}.
    \end{multline}
\end{lemma}

\begin{proof}
    With \eqref{eq:Delta-f1-bound} and \eqref{eq:Delta-f2-bound} we obtain
    \renewcommand{\myindent}{\hspace{2cm}}
    \begin{align*}
        &|(f''(\xi_2,\xi_{\tau,2})
        -f''(\xi_1,\xi_{\tau,1}))[\Dx,\Dxt][\dx,\dxt]| \\
        &\myindent\le
        |(f_1''(\xi_2,\xi_{\tau,2})
        -f_1''(\xi_1,\xi_{\tau,1}))[\Dx,\Dxt][\dx,\dxt]| \\
        &\myindent\quad+ |(f_2''(\xi_2,\xi_{\tau,2})
        -f_2''(\xi_1,\xi_{\tau,1}))[\Dx,\Dxt][\dx,\dxt]| \\
        &\myindent\le \hat\B R \sqrt{\nDx^2 + \nDxt^2} \sqrt{\ndx^2 + \ndxt^2}
    \end{align*}
    with $\hat\B = \hat\beta_1 + \hat\beta_2$.
\end{proof}

\end{document}